\documentclass{article}
\usepackage{amsmath}
\usepackage{amsthm}
\usepackage{times}
\usepackage{bm}
\usepackage[font=small,skip=0pt]{caption}
\usepackage{subcaption}
\usepackage{graphicx}
\usepackage{amssymb}
\usepackage{pifont}
\usepackage[page,title]{appendix}
\usepackage{mathrsfs}
\usepackage[margin=1.2in]{geometry}
\usepackage{setspace}
\usepackage{fancyhdr} 
\usepackage{listings}
\usepackage{xcolor}
\usepackage{natbib}
\usepackage{booktabs}
\usepackage{multirow}
\usepackage{enumerate}
\usepackage{appendix}
\usepackage{authblk}
\usepackage[ruled,vlined]{algorithm2e}

\usepackage{thmtools}
\usepackage{thm-restate}

\fancyhf{}
\chead{\Large{}}
\allowdisplaybreaks[4]
\newcommand{\WFcomment}[1]{{\color{red!80!black} {\textbf [#1]}}}

\newcommand{\simiid}{\stackrel{\text{i.i.d.}}{\sim}}
\newcommand{\simind}{\stackrel{\text{ind.}}{\sim}}
\newcommand{\EE}{\mathbb{E}}
\newcommand{\PP}{\mathbb{P}}
\newcommand{\RR}{\mathbb{R}}
\newcommand{\diag}{\text{diag}}
\DeclareMathOperator{\logit}{logit}
\DeclareMathOperator{\supp}{supp}
\DeclareMathOperator{\sgn}{sgn}
\DeclareMathOperator{\Cov}{Cov}
\DeclareMathOperator{\Var}{Var}
\DeclareMathOperator*{\argmax}{arg\,max}
\newcommand{\TPR}{\textnormal{TPR}}
\newcommand{\setcomp}{\mathsf{c}}
\newcommand{\toProb}{\stackrel{p}{\to}}
\newcommand*{\tran}{{\mkern-1.5mu\mathsf{T}}}

\newcommand{\ep}{\varepsilon}

\newcommand{\tX}{\widetilde{X}}
\newcommand{\tbeta}{\widetilde{\beta}}

\newcommand{\tsigma}{\widetilde{\sigma}}

\newcommand{\tp}{\widetilde{p}}

\newcommand{\hbeta}{\widehat{\beta}}
\newcommand{\htheta}{\widehat{\theta}}
\newcommand{\hrho}{\widehat{\rho}}
\newcommand{\hsigma}{\widehat{\sigma}}
\newcommand{\hSigma}{\widehat{\Sigma}}
\newcommand{\hDelta}{\widehat{\Delta}}
\newcommand{\hk}{\hat{k}}

\newcommand{\hM}{\widehat{M}}
\newcommand{\hV}{\widehat{V}}

\newcommand{\XAug}{[X \; \tX]}

\newcommand{\cN}{\mathcal{N}}
\newcommand{\cP}{\mathcal{P}}
\newcommand{\cR}{\mathcal{R}}

\newcommand{\TPP}{\textnormal{TPP}}

\newcommand{\FDP}{\textnormal{FDP}}
\newcommand{\hFDP}{\widehat{\textnormal{FDP}}}

\newcommand{\FWER}{\textnormal{FWER}}
\newcommand{\kn}{\textnormal{kn}}
\newcommand{\wh}{\textnormal{wh}}

\newtheorem{theorem}{Theorem}
\newtheorem{proposition}{Proposition}

\newtheorem{cor}{Corollary}
\newtheorem{lemma}{Lemma}

\newtheorem{example}{Example}

\title{Whiteout: when do fixed-$X$ knockoffs fail?}
\author{Xiao Li and William Fithian\\
Department of Statistics, UC Berkeley}
\date{\today}

\begin{document}

\maketitle

\abstract{
A core strength of knockoff methods is their virtually limitless customizability, allowing an analyst to exploit machine learning algorithms and domain knowledge without threatening the method's robust finite-sample false discovery rate control guarantee. While several previous works have investigated regimes where specific implementations of knockoffs are provably powerful, general negative results are more difficult to obtain for such a flexible method. In this work we recast the fixed-$X$ knockoff filter for the Gaussian linear model as a conditional post-selection inference method. It adds user-generated Gaussian noise to the ordinary least squares estimator $\hbeta$ to obtain a ``whitened'' estimator $\tbeta$ with uncorrelated entries, and performs inference using $\sgn(\tbeta_j)$ as the test statistic for $H_j:\; \beta_j = 0$. We prove equivalence between our whitening formulation and the more standard formulation involving negative control predictor variables, showing how the fixed-$X$ knockoffs framework can be used for multiple testing on any problem with (asymptotically) multivariate Gaussian parameter estimates. Relying on this perspective, we obtain the first negative results that universally upper-bound the power of all fixed-$X$ knockoff methods, without regard to choices made by the analyst. Our results show roughly that, if the leading eigenvalues of $\text{Var}(\hbeta)$ are large with dense leading eigenvectors, then there is no way to whiten $\hbeta$ without irreparably erasing nearly all of the signal, rendering $\sgn(\tbeta_j)$ too uninformative for accurate inference. We give conditions under which the true positive rate (TPR) for any fixed-$X$ knockoff method must converge to zero even while the TPR of Bonferroni-corrected multiple testing tends to one, and we explore several examples illustrating this phenomenon.
}

\section{Introduction}

\subsection{The knockoff filter in the Gaussian linear model}\label{sec:knockoffsfirst} 

Knockoff methods are a flexible class of multiple testing procedures that operate by introducing a ``negative control'' or ``knockoff'' for each predictor variable in a supervised learning problem and then testing a learning algorithm's ability to distinguish each real predictor from its knockoff. The original method, the {\em fixed-$X$ knockoff filter} \citep{barber2015controlling}, is a multiple testing method for the Gaussian linear model where we observe design matrix $X\in\RR^{n\times d}$ and response 
\[y = X\beta + \ep, \quad\text{ where }  \ep_i \simiid \cN(0,\sigma^2), \quad i = 1,\ldots, n.\]
The parameters $\sigma^2 > 0$ and $\beta\in\RR^d$ are unknown, and the goal is to test the hypothesis $H_j:\; \beta_j = 0$ against the two-sided alternative, for $j = 1,\ldots,d$. Following \citet{barber2015controlling}, we assume throughout that $X$ has full column rank with $2d \leq n$.

For testing $H_j$ individually, the usual two-sided $t$-test is uniformly most powerful unbiased. That test rejects for extreme values of the $t$-statistic $\hbeta_j/\sqrt{\hsigma^2 \Sigma_{jj}}$, where $\Sigma = (X^\tran X)^{-1}$ and $\hbeta$ and $\hsigma^2$ are respectively the OLS estimator
\[
\hbeta \;=\; (X^\tran X)^{-1}X^\tran y \;\sim\; \cN_d\left(\beta,\, \sigma^2\Sigma\right)
\]
and the residual variance $\hsigma^2 = \|y - X\hbeta\|^2/(n-d)$. Taken together, these two estimators are a complete sufficient statistic for the model. Let $p_j$ denote the $p$-value for the two-sided $t$-test on $H_j$.

For multiple testing, a classical approach would reject $H_j$ when $p_j$ is sufficiently small, after making an appropriate correction for the multiplicity of tests. If $R$ is the number of rejections and $V$ is the number of true null hypotheses rejected (false discoveries), \cite{benjamini1995controlling} define the {\em false discovery proportion} as $\FDP = V/\max\{R,1\}$, and the {\em false discovery rate} (FDR) as its expectation. While the Benjamini--Hochberg (BH) procedure of \cite{benjamini1995controlling} is not known to control the FDR in this problem unless the columns of $X$ are orthogonal, recent methods can directly adjust BH for the multivariate $t$ dependence, guaranteeing FDR control while performing similarly to BH \citep{fithian2020conditional}. The FDR criterion relaxes the more conservative {\em family-wise error rate} $\FWER = \PP(V \geq 1)$, the probability of making any false rejections, which we could control using the conservative Bonferroni correction that rejects $H_j$ when $p_j \leq \alpha/m$.\footnote{A more accurate FWER correction would apply the closure of the max-$t$ test: if $S$ is the maximal set for which $\|T_S\|_\infty$ is below its $1-\alpha$ quantile under $H_S:\; \beta_S = 0$, then we can reject $H_j$ for $j \notin S$; see \citet{marcus1976closed} and \citet{hothorn2008simultaneous} for more details.}

The knockoff filter of \citet{barber2015controlling} takes a radically different approach, bypassing the $t$-test $p$-values entirely. The method begins by augmenting the design matrix $X$ with a second matrix $\tX \in \RR^{n \times d}$ of negative control or {\em knockoff} variables, constructed to satisfy
    \[
    \tX^\tran \tX = X^\tran X, \quad\text{and}\quad \tX^\tran X = X^\tran X - D,
    \]
for some diagonal matrix $D \preceq 2X^\tran X$, where $\preceq$ denotes the positive semidefinite ordering. As we will discuss, a larger value of $D_{jj}$ preserves more signal for the inference on $H_j$, but in general it is not possible to maximize all $D_{jj}$ simultaneously so tradeoffs must be made between variables.

Knockoffs then calculates so-called $W$-statistics $W_1,\ldots, W_d$ satisfying two properties:
\begin{description}
    \item[1. Sufficiency] $W_j$ depends on $\XAug$ and $y$ only through $\XAug^\tran \XAug$ and $\XAug^\tran y$, and
    \item[2. Antisymmetry] Swapping any variable $X_j$ with its knockoff $\tX_j$ would flip the sign of $W_j$ and leave every other $W_k$ fixed. That is, if $\text{Swap}_j(\cdot)$ exchanges the $j$th and $(d+j)$th columns of an input matrix, then
        \begin{equation*}
        W_k(\text{Swap}_j(\XAug), y) \;\;=\;\; 
        \begin{cases} \;-W_j(\XAug, y) & \text{ if } k =j \\[10pt]
        \;W_k(\XAug, y) & \text{ if } k \neq j\end{cases}. 
        \end{equation*}
    \end{description}

The absolute values $|W| = (|W_1|, \ldots, |W_d|)$ determine a data-adaptive hypothesis ordering, with larger values assigned higher priority. Sufficiency and antisymmetry, along with the Gaussian modeling assumptions, ensure two key distributional properties: first, that $\sgn(W_1),\ldots,\sgn(W_d)$ are conditionally independent given $|W|$; and second, that if $\beta_j=0$ then $\sgn(W_j) \sim \text{Unif}\{-1,+1\}$ given $|W|$, unless $W_j = 0$. 

After calculating $W$-statistics, the knockoff filter applies an ordered multiple testing method called {\em Selective SeqStep} \citep{barber2015controlling} treating each $\sgn(W_j)$ as a ``binary $p$-value'' for $H_j$. To control FDR at level $\alpha$, the knockoff+\footnote{\citet{barber2015controlling} also propose a version without the ``$1+$'' in the numerator, which controls a relaxed FDR criterion. All results in this paper pertain to the knockoff+ method, but they also apply with very minor modifications to the more liberal version.} method rejects all hypotheses $H_j$ for which $W_j$ exceeds the adaptive threshold $\hat{t}$:
\[
\hat{t} = \min\left\{t: \hFDP_t^{\kn} \leq \alpha\right\}, \quad\text{where}\quad \hFDP_t^{\kn}  = \frac{1 + \#\{j:\;W_j \leq -t\}}{\#\{j:\; W_j \geq t\}}.
\]

The fixed-$X$ knockoff filter is a highly versatile method that offers its users a multitude of choices in selecting both the knockoff matrix and the $W$-statistics. \citet{spector2020powerful} show the importance of choosing $D$ well and discuss ramifications on the procedure's power, and various other works detail myriad ways to tailor the $W$-statistics using machine learning methods that exploit structural assumptions or other prior beliefs about the coefficients \citep[see e.g.,][]{dai2016knockoff,katsevich2019multilayer,chen2020prototype,dai2021kernel}. The method's customizability poses a major challenge if we hope to bound its power uniformly over the analyst's entire choice set, since well-informed analysts have ample opportunities to stack the deck in their own favor.

In light of this tremendous flexibility, it may come as a surprise to discover regimes where {\em no} knockoffs method --- not even one designed with full knowledge of the true regression coefficients --- can achieve nontrivial power, even while Bonferroni-corrected inference achieves near-perfect power. To begin to explain how knockoffs can go wrong, Section~\ref{sec:knockoffswhitened} formally recasts the knockoff filter as a conditional post-selection inference method built around a randomized estimator $\tbeta = \hbeta + \omega$, where $\omega$ is user-generated Gaussian noise in the style of \citet{tian2018selective}. Our interpretation builds on a conditioning argument in \citet{barber2019knockoff} and an observation in \citet{sarkar2021adjusting} that knockoffs constructs two independent estimators for $\beta$.

\subsection{Knockoffs as conditional inference on a whitened estimator}\label{sec:knockoffswhitened}

We now give an alternative but equivalent account of the knockoff filter without $W$-statistics, without sufficiency and antisymmetry properties, and even without knockoff variables. We view the method instead as a conditional post-selection inference procedure, where the key step is to construct a {\em whitened estimator} $\tbeta$ with diagonal covariance. If $\Delta \in \RR^{d \times d}$ is any diagonal matrix with $\Delta \succeq \Sigma = (X^\tran X)^{-1}$, let
\begin{equation}\label{eq:tbeta}
\tbeta \;=\; \hbeta + \omega \;\sim\; \cN_d\left(\beta, \,\sigma^2 \Delta\right), \quad \text{ where } \;\;\omega \sim \cN_d\left(\,0,\;\sigma^2 (\Delta - \Sigma)\right)
\end{equation}
is noise generated by the user, independently of $\hbeta$. We will see in Section~\ref{sec:knockoffswithoutknockoffs} that even when $\sigma^2$ is unknown, $\omega$ can be ``carved'' out of $\hsigma^2$ as long as $n \geq d + r$, where $r = \text{rank}(\Delta - \Sigma) \leq d$.

The independence of the coordinates of $\tbeta$ is bought at the price of higher variance, since $\Delta_{jj} \geq \Sigma_{jj}$. However, this price can be recouped by an exploratory analysis using the statistic
\begin{equation}\label{eq:xi}
\xi \;=\; \Sigma^{-1} \hbeta - \Delta^{-1}\tbeta \;\sim\; \cN_d\left( A\beta, \,\sigma^2A\right), \quad \text{ where } \;A \;=\; \Sigma^{-1} - \Delta^{-1} \succeq 0.
\end{equation}
$\xi$ carries the information lost by whitening since $\hbeta = \Sigma(\xi + \Delta^{-1}\tbeta)$, and it is independent of $\tbeta$ since
\[
\Cov(\xi, \tbeta) \;=\; \Sigma^{-1}\Cov(\hbeta, \hbeta + \omega) - \Delta^{-1} \Cov(\tbeta, \tbeta) \;=\; \sigma^2 I_d - \sigma^2 I_d \;=\; 0.
\]
Thus, $\hbeta$ is effectively ``split'' into two independent Gaussian vectors, $\tbeta$ and $\xi$, each holding part of the information about $\beta$. In terms of $\omega, \tbeta$ and $\xi$, the fixed-$X$ knockoff filter can be equivalently defined as follows:
\begin{description}
\item[Stage 1 (whitening).] For any diagonal $\Delta \succeq \Sigma$, generate noise $\omega \sim \cN_d(0,\sigma^2(\Delta - \Sigma))$ independently of $\hbeta$. Calculate $\tbeta$ and $\xi$ as in (\ref{eq:tbeta}--\ref{eq:xi}), but do not reveal any data to the analyst yet.
\item[Stage 2 (exploratory analysis).] Let the analyst observe $\xi$ and $|\tbeta|$ and use them to order the $d$ hypotheses for Selective SeqStep as $H_{[1]}, \ldots, H_{[d]}$, where $[1]$ indexes the first hypothesis in order and $[d]$ the last. In addition, select a one-sided alternative for each $H_j$. Let $\psi_j = +1$ if the right-tailed alternative is selected, and $\psi_j = -1$ for the left-tailed alternative.
\item[Stage 3 (confirmatory analysis).] Using Selective SeqStep, test the hypotheses in order using $\sgn(\tbeta_j)$ as a conditional test statistic for $H_j$. The signs are conditionally independent given $\xi$ and $|\tbeta|$, with 
\begin{equation}\label{eq:logit}
    \logit\,\PP\left(\sgn(\tbeta_j) = +1 \;\big|\; \xi, \,|\tbeta|\right) \;=\; \frac{2\,|\tbeta_j|}{\sigma^2\Delta_{jj}} \cdot \beta_j, \quad \text{ where } \logit\,p \,=\, \log \frac{p}{1-p}.
\end{equation}
\end{description}

Let $\tp_j$ denote the $p$-value for the conditional test of $H_j$ against a right- or left-tailed alternative according to $\psi_j$. Because $\sgn(\tbeta_j) \sim \text{Unif}\{-1,+1\}$ under $H_j$, and is stochastically increasing in $\beta_j$, we have $\tp_j = 1/2$
when $\sgn(\tbeta_j) = \psi_j$, and $\tp_j=1$ when $\sgn(\tbeta_j) = -\psi_j$. 

Using these $p$-values, the Selective SeqStep method  rejects $H_{[j]}$ if $j \leq \hk$ and $\tp_{[j]} = 1/2$, where
\[
\hk = \max \left\{k:\; \hFDP_k^{\wh} \leq \alpha\right\}, \quad \text{ with } \;\; \hFDP_k^{\wh} = \frac{1 + \sum_{j=1}^k 1\{\tp_{[j]} = 1\}}{\sum_{j=1}^k 1\{\tp_{[j]} = 1/2\}}.
\]

The conditional $p$-values $\tp_1,\ldots,\tp_d$ are independent given $\xi$ and $|\tbeta|$, satisfying Selective SeqStep's requirements, so the method described above controls the FDR, both conditionally and marginally. In Section~\ref{sec:equivalence} we formally state and prove equivalence between this formulation and the usual formulation of knockoffs, building on a conditioning argument in the Supplement of~\citet{barber2019knockoff}.

The exploratory analysis is defined vaguely because the analyst can use $\xi$ and $|\tbeta|$ however they like, provided they have not yet observed anything else about $y$ (while the sufficiency property of \citet{barber2015controlling} also restricts how the analyst may use the design matrix $X$, we will see in Section~\ref{sec:equivalence} that no such rule is needed). It is in this unrestricted stage that knockoffs can freely exploit prior information and structural assumptions. Indeed, the analyst can even use an informal process that iterates between trying out several models, visualizing the (observable) data, consulting their intuitions or their colleagues, and so on.

When the exploratory stage goes well, the ordering is highly informative, effectively reducing the multiplicity in Stage 3 by focusing inferential power on the first few hypotheses. In many problems, a good exploratory analysis can more than compensate for the randomization and binarization of the confirmatory test statistics $\sgn(\tbeta_j)$, helping knockoffs to outperform less flexible methods like BH.

The whitening interpretation underscores not only the versatility of the fixed-$X$ knockoff filter, but also its broad applicability. Other than defining $\Sigma = (X^\tran X)^{-1}$, the above method is defined without reference to any design matrix and would be equally applicable if we simply observed $\hbeta \sim \cN_d(\beta,\Sigma)$ directly. In Section~\ref{sec:knockoffswithoutknockoffs} we push this interpretation further and consider how to apply the whitening method in generic statistical models with asymptotically Gaussian estimators of $d$ parameters, a ubiquitous problem encompassing generalized least squares, generalized linear models, quantile regression, and many other settings. 

\subsection{Which problems are hard for knockoffs?}

Having seen that the fixed-$X$ knockoff filter is an appealing and generally applicable method, it is natural to ask when we can expect it to outperform competitors like BH or Bonferroni. While this is a complicated question for a method as flexible as knockoffs, the whitening interpretation helps to identify an important vulnerability of the method: when the eigenstructure of $\Sigma$ is unfavorable, the price of whitening can be very steep, dooming the confirmatory analysis before the exploratory analysis even begins. One such example is the well-studied and seemingly innocuous {\em multiple comparisons to control} (MCC) problem \citep{dunnett1955multiple}.

\begin{example}[Multiple comparisons to control]\label{ex:mcc}
    Assume that we observe a continuous response on $m$ units under each of $d$ treatments, along with $m_0$ units under a control condition, and estimate an additive treatment effect. Let $y_{j,i}$ denote the $i$th response under treatment $j = 0, 1, \ldots, d$, with $j=0$ representing the control condition. We can pose this problem as a linear model by writing $ y_{0,i} = \beta_0 + \varepsilon_{0,i}$ for the control group, and
    \[
    y_{j,i} = \beta_0 + \beta_j + \varepsilon_{j,i}
    \]
    for the $j$th treatment group, so that $\beta_j$ is the differential effect of treatment $j$ relative to control. If we compose a vector $y = (y_0^\tran, y_1^\tran, \cdots, y_d^\tran)^\tran$ and model the errors as Gaussian with a common variance $\sigma^2$, we obtain a standard linear regression problem with intercept $\beta_0$ and can perform multiple testing on the hypotheses $H_j:\;\beta_j \neq 0$, for $j=1,\ldots,d$.
    
\end{example}

In the MCC problem, the OLS estimator of $\beta_j$ for $j \geq 1$ is
\[
\hbeta_j \;=\; \frac{1}{m}\sum_{i=1}^m y_{j,i} \,-\, \frac{1}{m_0}\sum_{i=1}^{m_0} y_{0,i},
\]
so that $\textnormal{corr}(\hbeta_j,\hbeta_k) = \rho = \frac{m}{m_0+m}$ for distinct $j,k>0$. If we normalize the design matrix appropriately, then $\Sigma$ is an equicorrelated matrix with diagonal entries equal to 1 and off-diagonal entries equal to $\rho$. Its first eigenvalue is $\lambda_1 = 1 + \rho (d-1) \geq \rho d$, with eigenvector $u_1 = \mathbf{1}_d/\sqrt{d}.$ As we will see in Section~\ref{sec:choosedelta}, the eigenstructure of $\Sigma$ makes it impossible to find $\Delta \succeq \Sigma$ without making most of the $\Delta_{jj}$ values enormous. Specifically, the $k$th smallest diagonal entry of any valid $\Delta$ matrix can be no smaller than $\rho k$.

Let $d_1$ denote the number of nonzero coefficients. As our measure of power, we use the {\em true positive rate} (TPR), defined as the expectation of the {\em true positive proportion} $\TPP = (R-V)/d_1$, the fraction of non-nulls discovered. $\TPP$ and TPR are meaningless if $d_1 = 0$, so we assume $d_1 \geq 1$ in all power calculations. 

In Section~\ref{sec:boundpower} we will show that in an MCC problem where the non-null proportion $d_1/d$ tends to a nonzero constant, no knockoff method can achieve a nontrivial TPR as $d\to \infty$ unless the nonzero coefficients grow roughly as $\sigma \sqrt{d}$; by contrast, the Bonferroni method has $\TPR \to 1$ when the nonzero coefficients grow at the rate $\beta_0 = \sigma \sqrt{2\log d}$. If all of the nonzero coefficients have size $\beta_0$ then, applying Theorem~\ref{thm:main}, the expected number of rejections of any fixed-$X$ knockoffs method at level $\alpha = 0.05$ is  no larger than 
\begin{equation}\label{eq:mccbound}
  \EE \,R \;\leq\; \frac{13\log d}{\rho} + 42.
\end{equation}

If $d=10^6$ and $\rho=0.5$, and the coefficients are all at the Bonferroni threshold $5.25\sigma$, the upper bound evaluates to $387$. This is a finite-sample result that applies even for a knockoff method designed by an analyst with full knowledge of the coefficient vector at every stage of the procedure, who can favor the non-null variables both by adding less noise to them and by ensuring they are always ordered first. We also show in Section~\ref{sec:boundpower} how to improve the bounds by simulation; in this example we can improve the bound from $387$ to $84$. Reducing $\rho$ from $0.5$ to $0.05$ only helps a little, increasing the simulation bound to $865$. Asymptotically, unless $\rho$ shrinks at the rate $\frac{\log d}{d}$, any knockoffs method will be have $\TPR \to 0$ for signals of size $\beta_0$.

By contrast, in a linear regression whose design matrix has positively equicorrelated columns, the situation is completely different. In that case, $\Var(\hbeta)$ is {\em negatively} equicorrelated, so its leading eigenvalue is small and all of the $\Delta_{jj}$ values can be $O(1)$. Likewise, if the design matrix is i.i.d. Gaussian with $n \geq 2d$, then for large $d$ we expect the largest eigenvalue of $\smash{(\frac{1}{n} X^\tran X)^{-1}}$ to be no larger than roughly $(1-\sqrt{1/2})^{-2} \approx 11.7$, per the Marchenko--Pastur distribution. Finally, if $\lambda_1$ is very large but $u_1$ is heavily concentrated on only a few variables, knockoffs may still perform well on the other variables. Section~\ref{sec:choosedelta} gives explicit and efficiently computable lower bounds for all order statistics of $\Delta_{11},\ldots,\Delta_{dd}$ as a function of a general matrix $\Sigma$.

The MCC setting is a prototypical example of a hard problem for knockoffs. More generally, if $\Sigma$ has large leading eigenvalues with dense eigenvectors then we must have $\Delta_{jj} \gg \Sigma_{jj}$ for most of the variables, rendering the test statistic $\sgn(\tbeta_j)$ nearly uninformative about $\beta_j$ even when the coefficient is very large. In short, we cannot whiten the estimator without burying the signal under a blizzard of noise.

\subsection{Related work}\label{sec:relatedwork}

Several prior works foreshadow the whitening interpretation of Section~\ref{sec:knockoffswhitened}. Most notably, an argument in the Supplement of \citet{barber2019knockoff} conditions on $(X+\tX)^\tran y$ and $|(X-\tX)^\tran y|$ to prove the knockoff filter controls the {\em directional FDR}, a more stringent version of FDR where the analyst must draw a conclusion about $\sgn(\beta_j)$ whenever they reject $H_j$. An analogous argument to theirs shows that the whitening method, viewed as a conditional post-selection inference procedure, controls the directional FDR as well. If we define the data-dependent one-sided null hypothesis $H_j^{\psi_j}:\;\psi_j \beta_j \leq 0$, then $\tp_1,\ldots,\tp_d$ are independent and valid conditional $p$-values for $H_1^{\psi_1},\ldots,H_d^{\psi_d}$ given $\xi$ and $|\tbeta|$. As a result, if we conclude $\sgn(\beta_j) = \psi_j$ whenever $H_j$ is rejected then we will control the directional FDR, both conditionally and marginally.

\citet{sarkar2021adjusting} also exploit the independence of $(X+\tX)^\tran y$ and $(X-\tX)^\tran y$ with a view toward developing new methodology. When $A$ is invertible, they view $D^{-1}(X-\tX)^\tran y$ and $\frac{1}{2}A^{-1}(X+\tX)^\tran y$ as two independent unbiased estimators of $\beta$, very similar to our data splitting interpretation. From this starting point they derive hybrid multiple testing procedures blending fixed-$X$ knockoffs with the BH method.

As a conditional post-selection inference method that uses a randomized data set for exploration, knockoffs is related to two other lines of work. In the statistics literature, the {\em data carving} method of \citet{fithian2014optimal} improves on the \citet{lee2016exact} post-selection inference method by performing a selection algorithm such as the lasso \citep{tibshirani1996regression} only on a subset of the data points, effectively randomizing the selection. \citet{tian2018selective} proposed explicitly randomizing the selection algorithm by adding Gaussian or other noise to the sufficient statistics used by the selection algorithm. Their work was inspired by a parallel literature on {\em adaptive data analysis} wherein randomized algorithms are used to answer many statistical queries about a database, one after the other, without allowing the analyst to ``overfit'' \citep{dwork2015preserving,dwork2015generalization}. More recently \citet{zrnic2020post} adapted a related approach based on algorithmic stability, also using randomization to preserve data for post-selection inference in a selected linear regression model.

Prior work on the knockoff filter's power has focused on positive results for specific implementations of the method. When studying the TPP-FDP tradeoff on the Lasso path, it is frequently assumed that a constant fraction of non-zero coefficients are sampled from a fixed distribution \citep{bayati2011lasso, su2017false}. In addition, it is often assumed that $n/p\to\delta$ for some positive constant $\delta$. Under this intermediate-dimensional regime, the theory of approximate message passing (AMP) \citep{bayati2011lasso} may be used to derive the asymptotic power of knockoff.
\citet{weinstein2017power} derives the asymptotic TPP-FDP tradeoff for a knockoff-inspired procedure that is only valid for i.i.d. covariates, while
\citet{weinstein2020power} and \citet{wang2020power} quantify the asymptotic power of knockoffs with the lasso coefficient difference test statistic.
\cite{liu2019power} studied the power of knockoffs under correlated design in the low dimensional setting, and showed that the knockoff filter has full asymptotic power when the precision matrix has vanishing diagonal entries. Going beyond the aforementioned linear sparsity assumptions, \citet{fan2020rank} studied the power of an oracle knockoff filter where the covariance structure of the variables is known. They assumed that the coefficients are fixed and relatively large, and showed under certain regularity conditions that the oracle knockoff filter is consistent. \cite{ke2020power} analyzed the phase diagram of the SDP knockoff, but their results are restricted to block-equicorrelated correlation structure with block size $2$.

Recently, \cite{spector2020powerful} showed that the equicorrelated and SDP knockoff methods can be asymptotically powerless for equicorrelated Gaussian design matrices with correlation $\rho\geq 0.5$, and propose a new method for generating knockoff variables, called minimum variance-based reconstructability (MVR) knockoffs, to resolve the issue. They showed that the TPR of the MVR knockoff converges to 1 under regularity conditions. The failure mode identified by \citet{spector2020powerful} is very different from the one we discuss here, which cannot be fixed within the knockoffs framework as currently defined. Section~\ref{sec:spectorjanson} discusses their results in light of the whitening interpretation. 

To the best of our knowledge, our results give the first universal upper bounds on what any feasible fixed-$X$ knockoff method can achieve. We prove finite-sample bounds on the maximum number of rejections that any knockoffs method can make, as a function of the design matrix $X$ and $\beta/\sigma$. Our bounds reveal that in some contexts the best achievable knockoff method markedly underperforms the Bonferroni method, but in other contexts off-the-shelf knockoff implementations outperform all other known methods. Understanding the fundamental limitations of the fixed-$X$ knockoff framework is a first step toward developing generalizations that ameliorate its flaws while retaining its many strengths.

\subsection{Outline of results}

Section~\ref{sec:bounds} derives both finite-sample and asymptotic bounds on the power of knockoffs as a function of the matrix $\Sigma$ and the coefficient vector $\beta$. Our argument proceeds in several steps. First, Theorem~\ref{thm:equivalence} in Section~\ref{sec:equivalence} proves a strong form of equivalence between the usual formulation of fixed-$X$ knockoffs as presented in Section~\ref{sec:knockoffsfirst} and the whitening formulation presented in Section~\ref{sec:knockoffswhitened}, establishing that every implementation of the former is also an implementation of the latter. Thus, it suffices to bound the whitening method's power. 

The main challenge in proving universal negative results is dealing with the analyst's ability to customize two stages of the procedure: in Stage 1, the analyst can choose any diagonal matrix $\Delta \succeq \Sigma$, and in Stage 2, the analyst can use any exploratory method to order the hypotheses and select the alternative directions. We address these two choices in sequence: in Section~\ref{sec:knockoffstar}, we show that an oracle analyst can optimize the exploration stage by sorting the variables in order of the log-odds $\eta_j = |2\beta_j\tbeta_j|/\sigma^2\Delta_{jj}$, and derive bounds on the expected number of rejections in terms of the number of $\eta_j$ values exceeding the critical threshold $-\log \alpha$. In Section~\ref{sec:choosedelta} we obtain lower bounds on the order statistics of $\Delta_{11},\ldots,\Delta_{dd}$. 

Theorem~\ref{thm:main} in Section~\ref{sec:boundpower} combines these two lines of argument to bound the expected number of rejections in terms of $\Sigma$ and $\beta/\sigma$. For the case where $\beta$ is sparse ($d_1/d \to 0$), Theorem~\ref{thm:main} is very optimistic since the analyst can ``cheat'' by giving up on the $d-d_1$ null indices while constructing $\Delta$. Theorem~\ref{thm:mainrandom} in Section~\ref{sec:boundpowerrandom} obtains a stronger bound for the more realistic setting where the analyst must choose $\Delta$ without knowing which hypotheses are non-null. Corollary~\ref{cor:randombeta} applies Theorems~\ref{thm:main}--\ref{thm:mainrandom} to prove, roughly, that if $\lambda_1 \gg \log d$ and $u_1$ is dense then all knockoff methods have $\TPR \to 0$ even in a regime where the Bonferroni correction enjoys $\TPR \to 1$. 

Since it is infeasible for a real analyst to achieve the same power as the oracle methods we describe, the bounds should not be taken as power estimates for real knockoff methods; instead, they reflect fundamental limits on the performance of the cleverest possible analyst, no matter how much domain wisdom or methodological wizardry they can bring to bear on the problem.

We emphasize that our results do not spell doom for knockoffs in all or even most settings; they hardly could, because there are indeed settings where knockoff methods outperform not only Bonferroni but also more powerful competitors like BH. Rather, they are a first step toward characterizing the regimes where each method is preferred and, we hope, toward developing methods that achieve near-optimal performance across all regimes. In particular, the regimes we identify can largely be recognized from the structure of $\Sigma$, which is known before the responses are observed. As a result, we can simply use a different method when the structure of $\Sigma$ is unfavorable.

Section~\ref{sec:knockoffswithoutknockoffs} delves deeper into the whitening method. Section~\ref{sec:implementwhitening} shows how we can implement it when we only observe an unbiased multivariate Gaussian estimator $\hbeta$ with no accompanying design matrix, and Theorem~\ref{thm:asymptoticnormal} in Section~\ref{sec:asymptoticnormal} shows we can do the same with an asymptotically normal estimator along with a consistent covariance matrix estimator, though the result is limited to the classical asymptotic regime. Section~\ref{sec:snp} applies Theorem~\ref{thm:asymptoticnormal} to analyze a nonparametric bootstrap example using stock market return data. Section~\ref{sec:simulations} shows empirical comparisons between knockoffs and competing methods in several regimes of interest, and Section~\ref{sec:discussion} concludes.

\section{Bounding the power of fixed-$X$ knockoffs}\label{sec:bounds}

\subsection{Equivalence of the two formulations}\label{sec:equivalence}

Next we will show that the two formulations of knockoffs in Sections \ref{sec:knockoffsfirst} and \ref{sec:knockoffswhitened}, which we will respectively call the {\em standard method} and the {\em whitening method}, are essentially equivalent. In the standard method, an implementation of the knockoff filter is fully defined by a valid knockoff matrix $\tX$ and a recipe for computing $W$-statistics satisfying the sufficiency and antisymmetry properties. In the whitening method, an implementation is fully defined by a diagonal matrix $\Delta \succeq \Sigma = (X^\tran X)^{-1}$ along with a recipe for computing a hypothesis ordering and $\psi_j$ values from $|\tbeta|$ and $\xi$. We will show that the implementations of each method are essentially in one-to-one correspondence with each other, using a coupling between $\XAug^\tran y$ and $\omega$.

To begin our analysis, assume that $\tX$ is a valid knockoff matrix with $X^\tran \tX = X^\tran X - D$, which can be constructed for any $D \preceq 2X^\tran X$ \citep{barber2015controlling}, and assume all $D_{jj} > 0$. Set $\Delta = 2D^{-1}$, so $A = X^\tran X - \Delta^{-1} = X^\tran X - \frac{1}{2}D$. 

Following a conditioning argument in \citet{barber2019knockoff}, if we add and subtract $\tX^\tran y$ from $X^\tran y$ we obtain a useful $2d$-variate Gaussian statistic whose mean and variance follow from $y \sim \cN_n(X\beta, \sigma^2 I_n)$:
\begingroup
\renewcommand*{\arraystretch}{1.5}
\begin{equation}
\label{eq:knockoffdistribution}
    \begin{pmatrix} (X+\tX)^\tran y \\ (X-\tX)^\tran y \end{pmatrix}
    \;=\; 
    \cN_{2d}\left(
    \begin{pmatrix} 2A\beta \\ D\beta \end{pmatrix},
    \;\;
    \begin{pmatrix} 4\sigma^2 A & 0 \\ 0 & 2\sigma^2 D \end{pmatrix}
    \right).
\end{equation}
\endgroup

It is suggestive that by rescaling the second component in \eqref{eq:knockoffdistribution} we obtain $D^{-1}(X - \tX)^\tran y \sim \cN_d(\beta, \sigma^2\Delta)$, which is the desired distribution for $\tbeta$. Pursuing this ansatz, set
\begin{equation}\label{eq:omegacoupling}
\omega \;=\; D^{-1}(X - \tX)^\tran y - \hbeta \;\sim\; \cN_d\left(0, \,\sigma^2 (\Delta - \Sigma)\right).
\end{equation}

Because $\omega$ is ancillary and $\hbeta$ is complete sufficient in the submodel where $\sigma^2$ is known, it follows by Basu's Theorem that $\omega$ is independent of $\hbeta$. As a result we have $\tbeta = D^{-1}(X - \tX)^\tran y$ and
\[    
\xi \;=\; X^\tran X\hbeta - \Delta^{-1}\tbeta \;=\; X^\tran y - \frac{1}{2}(X - \tX)^\tran y \;=\; \frac{1}{2}(X + \tX)^\tran y.
\]

Next we relate the whitening method's exploratory stage to a slight relaxation of the sufficiency and antisymmetry properties.
\begin{proposition}\label{prop:Wstar}
For $j = 1,\ldots, d$ define
\begin{equation}\label{eq:Wstar}
W_j^*(\XAug, y) = \sgn\left((X_j - \tX_j)^\tran y\right)\cdot W_j(\XAug, y).
\end{equation}

If $W$ satisfies the sufficiency and antisymmetry properties, then $W^*$ depends on $y$ only through the unordered pairs $\{X_1^\tran y, \,\tX_1^\tran y\}, \;\ldots,\; \{X_d^\tran y, \,\tX_d^\tran y\}$. In terms of the coupling defined by \eqref{eq:omegacoupling}, if $W$ satisfies the sufficiency and antisymmetry properties then $W^*$ depends on $y$ only through $\xi$ and $|\tbeta|$.
\end{proposition}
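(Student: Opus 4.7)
The plan is to prove both halves in one stroke by combining the sufficiency and antisymmetry properties of $W$ with an invariance argument. First I would check the basic fact that $W^*$ inherits a swap-invariance property. For each $j$, applying $\text{Swap}_j$ to $\XAug$ flips the sign of $(X_j-\tX_j)^\tran y$ (and therefore of $\sgn((X_j-\tX_j)^\tran y)$), while by antisymmetry it also flips the sign of $W_j$; the two sign flips cancel, so $W_j^*$ is unchanged. For $k \neq j$, neither factor moves, so $W_k^*$ is also unchanged. Iterating, $W^*$ is invariant under $\text{Swap}_S$ for any $S \subseteq \{1,\ldots,d\}$.

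Next I would verify the key algebraic fact that $\XAug^\tran \XAug$ itself is invariant under every $\text{Swap}_S$. This uses exactly the knockoff constraints $\tX^\tran \tX = X^\tran X$ and $\tX^\tran X = X^\tran X - D$ with $D$ diagonal: all diagonal blocks are preserved because $X_j^\tran X_j = \tX_j^\tran \tX_j$, the swapped off-diagonal entry $\tX_j^\tran X_j = (X^\tran X - D)_{jj}$ agrees with $X_j^\tran \tX_j$, and cross-pair off-diagonals of the form $X_j^\tran \tX_k$ for $j \neq k$ are unaffected because $D$ is diagonal.

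With these two pieces in hand, suppose $y^{(1)}$ and $y^{(2)}$ produce identical unordered pairs $\{X_j^\tran y, \tX_j^\tran y\}$ for each $j$. Let $S$ be the set of indices on which the ordered pairs actually differ. Then at the level of inner products, $\XAug^\tran y^{(1)} = P_S^\tran \XAug^\tran y^{(2)}$, where $P_S$ is the swap permutation. Combined with the invariance of $\XAug^\tran \XAug$ under $P_S$, sufficiency yields
\[
W(\XAug, y^{(1)}) \;=\; W(\text{Swap}_S(\XAug), y^{(2)}),
\]
and antisymmetry rewrites the right-hand side as $W(\XAug, y^{(2)})$ with the signs of coordinates in $S$ flipped. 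The very same coordinates in $S$ are precisely the ones where $\sgn((X_j - \tX_j)^\tran y)$ also flips between $y^{(1)}$ and $y^{(2)}$, so after multiplying the two sign changes cancel coordinate-wise. Hence $W^*(\XAug, y^{(1)}) = W^*(\XAug, y^{(2)})$, establishing the first claim.

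For the second claim, I would simply observe that under the coupling \eqref{eq:omegacoupling} we have $\xi_j = \tfrac12(X_j^\tran y + \tX_j^\tran y)$ and $|\tbeta_j| = D_{jj}^{-1}|X_j^\tran y - \tX_j^\tran y|$, and the sum together with the absolute difference determines the unordered pair $\{X_j^\tran y,\tX_j^\tran y\}$. Thus dependence on the unordered pairs is exactly dependence on $(\xi,|\tbeta|)$. The only delicate step in the argument is the Gram-invariance in the second paragraph; once that is in place, the sign bookkeeping in the third paragraph is routine, and I would expect to state it cleanly by composing swaps one index at a time rather than handling $\text{Swap}_S$ directly.
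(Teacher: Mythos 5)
Your proof is correct and follows essentially the same route as the paper's: establish swap-invariance of $W^*$ coordinate-by-coordinate, then combine sufficiency with the invariance of $\XAug^\tran\XAug$ under $\text{Swap}_S$ to deduce dependence only on the unordered pairs, and finally identify those pairs with $(\xi,|\tbeta|)$ via the coupling. The main difference is that you spell out the Gram-matrix invariance and the two-response comparison that the paper compresses into a single sentence (``which amounts to swapping $X_j^\tran y$ with $\tX_j^\tran y$ without changing $\XAug^\tran\XAug$'').
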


The first claim of Proposition~\ref{prop:Wstar} is proven in the supplement of \citet{barber2019knockoff} but we provide a full proof here for expository purposes.

\begin{proof}
By the antisymmetry property we have
\begin{align*}
W_j^*(\text{Swap}_j(\XAug), y) 
    &\;\;=\;\;  \sgn\left((\tX_j - X_j)^\tran y\right) \cdot W_j(\text{Swap}_j(\XAug), y)\\
    &\;\;=\;\;  \sgn\left(-(X_j - \tX_j)^\tran y\right) \cdot \left(-W_j(\XAug, y)\right)\\
    &\;\;=\;\; W_j^*(\XAug, y),
\end{align*}
so $W_k^*(\text{Swap}_j(\XAug), y) = W_k^*(\XAug, y)$ for all $k$ including $k=j$.

By the sufficiency property, $W$ only depends on $y$ through $\XAug^\tran y$, so the same is true of $W^*$. But we have just seen that $W^*$ is invariant to swapping $X_j$ with $\tX_j$, which amounts to swapping $X_j^\tran y$ with $\tX_j^\tran y$ without changing $\XAug^\tran \XAug$. In other words, $W^*$ depends on $y$ only through the unordered pairs $\{X_j^\tran y, \,\tX_j^\tran y\}$, for $j = 1,\ldots,d$.

For the coupling in \eqref{eq:omegacoupling}, we have $2\xi = (X + \tX)^\tran y$ and $D\tbeta = (X - \tX)^\tran y$. Hence, observing $\xi$ and $|\tbeta|$ is the same as observing $(X+\tX)^\tran y$ and $|(X - \tX)^\tran y|$, which is the same as observing the unordered pairs.
\end{proof}

We will say that $W$ satisfies the {\em unordered pair property} if $W^*$ depends on $y$ only through $\{X_j^\tran y, \tX_j^\tran y\}$ for $j=1,\ldots,d$. This condition relaxes the sufficiency and antisymmetry properties because it allows for $W^*$ to have unrestricted dependence on $X$ and $\tX$.  With these relationships established, we are now prepared to prove formal equivalence. 

\begin{theorem}\label{thm:equivalence}
Assume that $n \geq 2d$ and let $\tX$ be a knockoff matrix with $X^\tran \tX = X^\tran X - D$. Let $\Delta = 2D^{-1}$ and define $\omega$ as in \eqref{eq:omegacoupling}. Then
\begin{enumerate}[(a)]
\item For any implementation of the whitening method, we can construct $W$-statistics satisfying the unordered pair property so that the two methods give identical rejection sets.
\item For any $W$-statistics satisfying the unordered pair property such that $|W_1|,\ldots,|W_d|$ are almost surely positive with no ties, we can construct an implementation of the whitening method so that the two methods give identical rejection sets.
\end{enumerate}
\end{theorem}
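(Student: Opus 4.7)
The plan is to exploit the coupling laid out just before the theorem, which makes $\tbeta = D^{-1}(X-\tX)^\tran y$ and $\xi = \tfrac{1}{2}(X+\tX)^\tran y$. Since $D$ is a positive diagonal matrix, $\sgn(\tbeta_j) = \sgn((X_j - \tX_j)^\tran y)$, so the definition $W_j^* = \sgn((X_j-\tX_j)^\tran y)\,W_j$ factors any $W$-statistic as $W_j = \sgn(\tbeta_j)\,W_j^*$ with $|W_j| = |W_j^*|$. This gives a clean dictionary between the two methods: the magnitudes $|W_j|$ used by the standard method to order hypotheses coincide with $|W_j^*|$, a function of $(\xi,|\tbeta|)$ by Proposition~\ref{prop:Wstar}; and the binary sign $\sgn(W_j)$ factors as $\sgn(\tbeta_j)\cdot\sgn(W_j^*)$, which matches the whitening $p$-value under the identification $\psi_j = \sgn(W_j^*)$, since then $\tp_j = 1/2$ iff $\sgn(\tbeta_j) = \psi_j$ iff $W_j > 0$.

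With this dictionary in hand, the heart of the argument is checking that the two adaptive stopping rules agree. If the hypotheses are ordered by $|W_j^*|$ in decreasing order, then any candidate threshold $t = |W_{[k]}|$ in the standard knockoff+ filter partitions the indices exactly like the prefix $\{[1],\ldots,[k]\}$ used by Selective SeqStep in the whitening method: $\{j:W_j \geq t\}$ is the set of prefix indices with $\tp_{[j]} = 1/2$ and $\{j:W_j \leq -t\}$ is the set with $\tp_{[j]} = 1$. Consequently $\hFDP_t^{\kn} = \hFDP_k^{\wh}$ along matched pairs $(t,k)$, so the two thresholds $\hat t$ and $\hk$ coincide and the methods produce the same rejection set.

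For part (a), given a whitening implementation with diagonal $\Delta \succeq \Sigma$ and exploratory output $([\cdot],\psi)$, the condition $\Delta \succeq (X^\tran X)^{-1}$ rewrites as $D := 2\Delta^{-1} \preceq 2X^\tran X$, so a valid knockoff $\tX$ exists (here we use $n\geq 2d$). Define $W_j^* = \psi_j \cdot g_j(\xi,|\tbeta|)$ where $g_j$ is any positive function of $(\xi,|\tbeta|)$ that is strictly decreasing in the rank of $j$ in the analyst's ordering, and set $W_j = \sgn(\tbeta_j)\,W_j^*$. Then $W$ satisfies the unordered pair property by construction and, by the dictionary, reproduces the whitening rejection set. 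For part (b), start from $W$-statistics with the unordered pair property, define $W^*$ via \eqref{eq:Wstar} (a function of $(\xi,|\tbeta|)$ by Proposition~\ref{prop:Wstar} combined with the coupling), and let the whitening analyst take the ordering induced by $|W_j^*|$ in decreasing order (well-defined thanks to the no-ties assumption) and $\psi_j = \sgn(W_j^*)$; the dictionary again yields identical rejections. The main obstacle across both directions is verifying the ordering--threshold correspondence cleanly, which is why the ties-free hypothesis is needed in (b): it rules out ambiguous orderings among variables with equal $|W_j|$, where the standard method could in principle reject a non-prefix subset.
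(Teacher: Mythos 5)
Your proof is correct and follows essentially the same route as the paper's: define $W_j = \sgn(\tbeta_j)\,W_j^*$ with $W_j^*$ a function of $(\xi,|\tbeta|)$, observe $\tp_j = 1/2 \iff W_j > 0$, and match the two adaptive thresholds by showing $\hFDP^{\kn}_t = \hFDP^{\wh}_k$ at corresponding pairs. The paper just instantiates your abstract $g_j$ with the explicit choice $g_{[j]} = d+1-j$ (so $|W|$ is WLOG a permutation of $\{1,\dots,d\}$), which makes the threshold correspondence $\hat t = d+1-\hk$ completely transparent; your phrasing that $\hat t$ and $\hk$ ``coincide'' is slightly loose (they correspond, not coincide), but the conclusion about rejection sets is right. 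One small misreading: in part (a) you remark that a valid knockoff $\tX$ exists because $D = 2\Delta^{-1} \preceq 2X^\tran X$, but the theorem statement already fixes $\tX$ and $D$ as given, so this step is superfluous—though it does no harm.
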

\begin{proof}

For (a), take $W_{[j]} = (d + 1 - j) \,\cdot\, \psi_{[j]} \,\cdot\, \sgn(\tbeta_{[j]})$, for $j=1,\ldots,d$. Recalling that $\sgn(\tbeta_j) = \sgn((X_j - \tX_j)^\tran y)$, we have $W_{[j]}^* = (d + 1 - j) \,\cdot\, \psi_{[j]}$, so $W^*$ depends on $y$ only through $\xi$ and $|\tbeta|$, satisfying the unordered pair property.

For (b), because the method depends on $|W|$ only through the ordering of its coordinates, we can assume without loss of generality that $|W|$ is a permutation of $\{1,\ldots,d\}$. Take $[j]$ to be the index of the $j$th largest $|W_j|$ value, so that $|W_{[j]}| = d+1-j$, and set $\psi_j = \sgn(W_j^*)$, which is a function of $\xi$ and $|\tbeta|$. Then we again have $W_{[j]} = (d + 1 - j) \,\cdot\, \psi_{[j]} \,\cdot\, \sgn(\tbeta_{[j]})$.

To see why the two methods return the same rejection sets when $W_{[j]} = (d + 1 - j) \,\cdot\, \psi_{[j]} \,\cdot\, \sgn(\tbeta_{[j]})$, note first that $\tp_j = 1/2 \iff \sgn(\tbeta_j) = \psi_{j}  \iff W_j > 0$. As a result
\begin{align*}
    \hFDP_k^{\wh} 
\;=\; \frac{1 + \sum_{j=1}^d 1\{\tp_{[j]} = 1, \,[j] \leq k\}}{\sum_{j=1}^d 1\{\tp_{[j]} = 1/2, \,[j] \leq k\}}
    \;=\; \frac{1 + \sum_{j=1}^d 1\{W_{[j]} \leq -(d + 1 - k)\}}{\sum_{j=1}^d 1\{W_{[j]} \geq d + 1 - k\}}
    \;=\; \hFDP_{d+1-k}^{\kn}.
\end{align*}

For other values of $t \in (0, d+1]$, $\hFDP_{t}^{\kn} = \hFDP_{\lceil t\rceil}^{\kn}$, where $\lceil t\rceil$ is the integer ceiling of $t$. Therefore, $\hat{t} = d + 1 - \hk$ and the rejection sets are the same.

\end{proof}

Because the whitening method controls FDR, Theorem~\ref{thm:equivalence} implies that the unordered pair property is a sufficient condition for the standard knockoff filter to control FDR as well, relaxing the sufficiency and antisymmetry properties.  The requirement in (b) that all $|W_j|$ be positive and distinct is not really necessary; we could break ties or generate ``signs'' at random, or generalize the whitening method so that $\hFDP_k^{\wh}$ is only calculated for a subset of $k=\{1,\ldots,d\}$ corresponding to the indices where $|W_{[k]}|$ is positive and decreases. We ignore these generalizations for the sake of brevity because they can only reduce the number of rejections.

As Theorem~\ref{thm:equivalence} shows, any implementation of the standard method is achievable with the whitening method. It follows that any uniform bound on TPR for all implementations of the whitening method is also a uniform bound for all implementations of the standard method.

\subsection{The oracle analyst and the knockoff* procedure}
\label{sec:knockoffstar}

Throughout the rest of Section~\ref{sec:bounds}, we consider the perspective of an oracle analyst with full knowledge of $\beta$. To bound the power of this analyst we work backwards from the end of the procedure. In this section we discuss how the oracle analyst can make optimal decisions in the exploratory analysis of Stage 2, and in Sections~\ref{sec:choosedelta}--\ref{sec:boundpower} we apply our analysis to understand the ramifications for choosing $\Delta$.

The analyst's task in Stage 2 is to choose alternative directions $\psi_1,\ldots,\psi_d$ and a hypothesis ordering $[1],\ldots,[d]$, after observing $\xi$ and $|\tbeta|$. To choose $\psi_j$, recall that $\tp_j = 1/2$ only if $\sgn(\tbeta_j) = \psi_j$. As a result, an analyst who knows $\beta$ will always set $\psi_j = \sgn(\beta_j)$, unless $\beta_j = 0$ in which case $\psi_j$ makes no difference and can be chosen arbitrarily. Then the best possible ordering of variables is in decreasing order of the conditional log-odds:
\begin{equation}
\label{eq:etaj}
\eta_j \;=\; \logit \,\PP\left(\sgn(\tbeta_j) = \sgn(\beta_j) \;\big|\; \xi, |\tbeta|\right) \;=\;  
\frac{2|\tbeta_j| }{\sigma^2\Delta_{jj}} \cdot |\beta_j|.
\end{equation}

Because $\tbeta$ is continuous, the $\eta_j$ values for nonzero $\beta_j$ are almost surely positive and distinct, whereas if $\beta_j = 0$ then $\eta_j = 0$ too. Define the {\em knockoff* procedure} to be any method that sets $\psi_j = \sgn(\beta_j)$ if $\beta_j\neq 0$ and arbitrarily otherwise, and returns an ordering with
\[
\eta_{[1]} > \cdots > \eta_{[d_1]} > 0 = \eta_{[d_1+1]} = \cdots = \eta_{[d]}.
\]
The relative ordering of the null hypotheses is also arbitrary.
We show next that the knockoff* procedure maximizes the TPR among all implementations using the same $\Delta$.

\begin{restatable}{proposition}{knockoffstaropt}
\label{prop:knockoffstaropt}
Conditional on $\xi$ and $|\tbeta|$, both the TPP and the expected total number of rejections for the knockoff* procedure are stochastically larger than they are any other implementation with the same $\Delta$. Consequently, the knockoff* procedure maximizes both the TPR and the expected number of rejections.
\end{restatable}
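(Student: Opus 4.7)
The plan is to decompose the Stage 2 optimization into the choice of alternative directions $\psi_1,\ldots,\psi_d$ and the choice of hypothesis ordering, and to handle them in sequence using a common coupling and a monotonicity property of Selective SeqStep: if the single indicator $1\{\tp_{[k]} = 1/2\}$ is flipped from $0$ to $1$ with all other indicators held fixed, then $\hFDP_j^{\wh}$ weakly decreases for every $j$, so $\hk$ weakly grows and the rejection set can only expand. In particular both $R$ and $R - V$ are nondecreasing in the vector $(1\{\tp_{[k]} = 1/2\})_{k=1}^d$.

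For the choice of $\psi$, I would fix any ordering and couple all candidate procedures via i.i.d.\ $U_j \sim \text{Unif}[0,1]$, setting $I_j = 1\{U_j \leq \PP(\sgn(\tbeta_j) = \psi_j \mid \xi, |\tbeta|)\}$. By \eqref{eq:logit} this probability is $e^{\eta_j}/(1+e^{\eta_j}) \geq 1/2$ when $\psi_j = \sgn(\beta_j)$ (the knockoff* choice) and $1/(1+e^{\eta_j}) \leq 1/2$ otherwise, and it equals $1/2$ at every null regardless. Hence under the coupling the knockoff* indicator vector dominates any competitor's pointwise, so the monotonicity lemma gives $R^{\star} \geq R^\pi$ and $\hk^{\star} \geq \hk^\pi$ pointwise. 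Splitting $(R-V)^{\star} - (R-V)^\pi$ into summands from positions $\leq \hk^\pi$ (nonnegative by the indicator comparison) and summands from $(\hk^\pi, \hk^{\star}]$ (nonnegative because each is an $I_j \in \{0,1\}$) yields $(R-V)^{\star} \geq (R-V)^\pi$ pointwise as well.

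With $\psi_j = \sgn(\beta_j)$ fixed, the $I_j$'s share a common distribution across orderings and can be coupled via $I_j = 1\{U_j \leq q_j\}$ for $q_j = e^{\eta_j}/(1+e^{\eta_j})$. I would then argue by adjacent swaps: if an ordering $\pi$ places indices $a, b$ at positions $i, i+1$ with $q_a < q_b$ and $\pi'$ swaps them, then on $\{I_a + I_b \in \{0, 2\}\}$ we have $R^\pi = R^{\pi'}$ and $(R-V)^\pi = (R-V)^{\pi'}$, while on $\{I_a + I_b = 1\}$ a direct computation gives
\[
\PP(I_b = 1 \mid I_a + I_b = 1) \;=\; \frac{(1-q_a)\,q_b}{(1-q_a)\,q_b + q_a(1-q_b)} \;>\; \tfrac{1}{2},
\]
so the pattern $(1, 0)$ at positions $(i, i+1)$ is strictly more likely under $\pi'$ than under $\pi$. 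Writing $S_{i-1}, F_{i-1}$ for the cumulative indicator counts before position $i$, the inequality $(1+F_{i-1})/(S_{i-1}+1) \leq (2+F_{i-1})/S_{i-1}$ shows that holding all other positions fixed, the $(1, 0)$ pattern yields $\hk$ at least as large as the $(0, 1)$ pattern and therefore at least as many total rejections; a short case analysis on the null/non-null labels of $a, b$ extends the same conclusion to $R - V$. Averaging over $\{I_j\}_{j \neq a, b}$ and $I_a + I_b$ gives the stochastic dominance of $R^{\pi'}$ over $R^\pi$ and of $(R-V)^{\pi'}$ over $(R-V)^\pi$. Since any ordering can be reduced to $\pi^{\star}$ by finitely many such swaps, the dominances transfer to $\pi^{\star}$; combined with Step 1 this proves the proposition, and taking expectations yields the TPR and expected-rejection consequences. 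The main obstacle is the $R - V$ comparison in Step 2: unlike in Step 1 one does not get a pointwise coupling, so the case analysis must track carefully what happens at the boundary $\hk$ when the swap exchanges a non-null past a null, where the non-null label itself moves from position $i+1$ to position $i$ and thus changes which term in the sum defining $R - V$ is active.
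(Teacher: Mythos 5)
Your proposal follows essentially the same two-step strategy as the paper: optimize the alternative directions $\psi$ first (via monotonicity in the conditional log-odds), then sort the hypotheses via adjacent-transposition exchanges. Your coupling via uniform $U_j$'s in Step 1 makes the paper's terser ``stochastically increasing'' claim pointwise, and your conditioning on $\{I_a + I_b\}$ and the remaining indicators in Step 2 is a finer (and more explicit) version of the paper's conditioning on $E_{-j}$ — the underlying combinatorics are the same, and your computation of $\PP(I_b = 1 \mid I_a + I_b = 1)$ is exactly what makes the paper's ``increases the conditional probability that $E_j=1$'' precise. The one place you flag as an obstacle, the $R - V$ comparison under a swap of a non-null $b$ past a possibly-null $a$, does go through: writing the four quantities $(R-V)^{\sigma, \text{pat}}$ for $\sigma \in \{\pi,\pi'\}$ and pattern $\in \{(1,0),(0,1)\}$, the key identity is $(R-V)^{\pi',(0,1)} = (R-V)^{\pi,(1,0)}$ and the key inequality is $(R-V)^{\pi',(1,0)} \geq (R-V)^{\pi,(0,1)}$, which follows because the swap puts the non-null $b$ at the earlier position; these two facts together with the mirrored pattern probabilities give the stochastic-dominance inequality directly. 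So the sketch is correct and, if anything, gives a cleaner account of the step the paper handles in a single sentence.
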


While there is some ambiguity in defining the knockoff* procedure concerning the alternative direction and relative ordering for the null indices, the statement of Proposition~\ref{prop:knockoffstaropt} applies to any version of it, implying in particular that any two versions of knockoff* have the same TPR. The proof, given in Section~\ref{sec:proofs}, uses the following useful formula for the number of rejections $R$ as a function of $\hk$. Because we must have $\tp_{[\hk+1]} = 1$,
\begin{equation}\label{eq:Rformula}
\hFDP^\wh_{\hk} \leq \alpha < \hFDP^\wh_{\hk+1} \;\;\Longrightarrow\;\; 
1+\hk \,\leq\, (1+\alpha)R \,<\, 2+\hk \;\;\Longrightarrow\;\; 
R = \left\lceil \frac{1+\hk}{1+\alpha} \right\rceil.
\end{equation}

Next, we develop tools for bounding the power of the knockoff* procedure. If we write the order statistics of $\eta$ as $\eta_{(1)} > \cdots > \eta_{(d_1)} > 0 = \eta_{(d_1+1)} = \cdots = \eta_{(d)}$, then we can obtain upper bounds on the TPR of any knockoff procedure by bounding the knockoff* procedure which has $\eta_{[k]} = \eta_{(k)}$ for all $k$.

Roughly speaking, for knockoffs to make rejections, we must see $\tp_{[j]}=1/2$ at least $1/\alpha$ times as often as $\tp_{[j]}=1$ near the beginning of the list, so the largest $\eta_{(j)}$ values must exceed $-\log \alpha$; Proposition~\ref{prop:eta} gives a simple result relating these quantities. For $0 < p < q < 1$, define the random walk 
\[
    S_k^{p,q} = \sum_{j=1}^k p - Z_j, \quad \text{where } Z_1,Z_2, \ldots \simiid \text{Bern}\left(q\right),
\]
and let $M_d^{p,q} = \max\{k\leq d:\; S_k^{p,q} \geq 1-p\}$. 

\begin{proposition}
\label{prop:eta}
    Conditional on $\eta_{(1)} < -\log \alpha$, the number of rejections for any knockoff procedure at FDR level $\alpha$ is stochastically smaller than $M_d^{p,q_1}$ for $p=\frac{\alpha}{1+\alpha}$, $q_1=\frac{1}{1+e^{\eta_{(1)}}}$.
\end{proposition}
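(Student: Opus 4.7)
The plan is to recast the knockoff filter's data-adaptive stopping rule as a random-walk first-passage event, and then dominate that walk pathwise by an i.i.d.\ Bernoulli walk with parameter $q_1$. Throughout I condition on $(\xi, |\tbeta|)$, which fixes every $\eta_j$ from \eqref{eq:etaj}; the event $\{\eta_{(1)} < -\log \alpha\}$ is then precisely the event $\{q_1 > p\}$ for $p = \alpha/(1+\alpha)$, which is exactly the regime in which $M_d^{p, q_1}$ is a nontrivial (finite) bound.

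First I would let $Z_j = 1\{\tp_{[j]} = 1\}$, so the numerator and denominator of $\hFDP_k^\wh$ are $1 + \sum_{j \leq k} Z_j$ and $k - \sum_{j \leq k} Z_j$ respectively. A direct algebraic rearrangement shows that $\hFDP_k^\wh \leq \alpha$ is equivalent to
\[
S_k^{p, Z} \;=\; \sum_{j=1}^k (p - Z_j) \;\geq\; 1 - p,
\]
so $\hk = \max\{k \leq d : S_k^{p, Z} \geq 1 - p\}$ and the number of rejections is $R = \hk - \sum_{j=1}^{\hk} Z_j \leq \hk$. By the conditional-independence property of $\sgn(\tbeta_j)$ given $(\xi, |\tbeta|)$, the $Z_j$ are conditionally independent Bernoulli; and using \eqref{eq:logit} together with $\eta_{[j]} \geq 0$, the parameter $q_j = \PP(Z_j = 1 \mid \xi, |\tbeta|)$ takes the value $1/(1+e^{\eta_{[j]}})$ when $\psi_{[j]} = \sgn(\beta_{[j]})$ and $1/(1+e^{-\eta_{[j]}})$ otherwise, so in every case $q_j \geq 1/(1+e^{\eta_{[j]}})$.

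Because the ordering $[1], \ldots, [d]$ is measurable with respect to $(\xi, |\tbeta|)$, every $\eta_{[j]} \leq \eta_{(1)}$, giving $q_j \geq q_1 = 1/(1+e^{\eta_{(1)}})$ uniformly in $j$. I would now couple the $Z_j$'s to i.i.d.\ $Z'_j \sim \text{Bern}(q_1)$ via independent uniforms on $[0,1]$: setting $Z_j = 1\{U_j \leq q_j\}$ and $Z'_j = 1\{U_j \leq q_1\}$ gives $Z'_j \leq Z_j$ pointwise, hence $S_k^{p, Z'} \geq S_k^{p, Z}$ for every $k$. Therefore $\{k : S_k^{p, Z} \geq 1-p\} \subseteq \{k : S_k^{p, Z'} \geq 1-p\}$, so $\hk \leq M_d^{p, q_1}$ on the coupled space; combined with $R \leq \hk$ this yields $R \leq M_d^{p, q_1}$ almost surely under the coupling, which is the desired conditional stochastic dominance. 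Integrating over $(\xi, |\tbeta|)$ restricted to the event $\{\eta_{(1)} < -\log \alpha\}$ gives the proposition as stated.

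The main subtlety I anticipate is making sure the argument covers \emph{any} knockoff procedure rather than only knockoff*. Two avenues of analyst flexibility need to be dispatched: the ordering, handled by the inequality $\eta_{[j]} \leq \eta_{(1)}$; and the choice of alternative direction $\psi_{[j]}$, handled by the observation that a ``wrong'' $\psi$ can only inflate $q_j$ above the oracle value $1/(1+e^{\eta_{[j]}})$. One could alternatively invoke Proposition~\ref{prop:knockoffstaropt} to reduce to knockoff* at the outset, but the uniform lower bound $q_j \geq q_1$ dispatches both sources of flexibility in a single step, so this detour is unnecessary. The remaining work is purely bookkeeping: the algebraic translation of the FDP inequality into the random-walk threshold crossing and the standard monotone coupling of Bernoullis with different parameters.
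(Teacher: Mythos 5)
Your proof is correct and follows essentially the same argument as the paper: recast $\hFDP_k^\wh \leq \alpha$ as a random-walk threshold crossing $S_k \geq 1-p$, then dominate that walk by a $\mathrm{Bern}(q_1)$ walk via a monotone coupling. The only minor stylistic difference is that the paper bounds the knockoff* procedure and implicitly relies on Proposition~\ref{prop:knockoffstaropt} to extend to arbitrary procedures, whereas you handle the ordering and the alternative-direction choice in one stroke via the universal bound $q_j \geq q_1$ (and use a common-uniform coupling rather than the paper's auxiliary $B_j \sim \mathrm{Bern}(q_1/q_j)$ multiplier) --- a mild simplification, not a different route.
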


\begin{proof}
    Conditioning on $\xi$ and $|\tbeta|$, define $q_j = \frac{1}{1+e^{\eta_{(j)}}}$, and $Z_j' = 1\{\tp_j = 1\} \simind \text{Bern}(q_j)$. Then for the knockoff* procedure,
    \[
    \hFDP_k^\wh = \frac{1+A_k}{R_k}, \qquad \text{ where }  \;\;A_k = \sum_{j=1}^k Z_j', \quad \text{and } \;\;R_k = k-A_k = \sum_{j=1}^k 1-Z_j'.
    \]
    The number of knockoff* rejections is $R_{\hk} \leq \hk$, where $\hk$ is the largest index $k$ with
    \[
    \frac{1+A_k}{R_k} \;\leq\; \alpha \;\;\iff\;\; 1 \;\leq\; \alpha R_k - A_k \;=\; \sum_{j=1}^k \alpha - (1+\alpha)Z_j' \;=\; \frac{1}{1-p}\,\sum_{j=1}^k p - Z_j'.
    \]
    Next, define auxiliary random variables $B_1,\ldots,B_d \simiid \text{Bern}(q_1/q_j)$ and construct the random walk $S_k^{p,q_1}$ whose first $d$ increments are $Z_j = Z_j'\cdot B_j \simiid \text{Bern}(q_1)$. Then $\alpha R_k - A_k \leq \frac{1}{1-p} S_k^{p, q_1}$, so we have almost surely $\hk \leq M_d^{p,q_1}$.
\end{proof}

In particular, Proposition~\ref{prop:eta} implies that in any asymptotic regime where $\eta_{(1)} \toProb 0$, the number of knockoff rejections is $O_p(1)$ for any FDR level $\alpha$.

While Proposition~\ref{prop:eta} can help build our intuition about the role of the log-odds threshold $-\log\alpha$, it is not a strong enough result to control the power of an oracle analyst, who can always take $\Delta_{jj} \downarrow \Sigma_{jj}$ and force $[1] = j$ for a single promising variable $j$. The next result generalizes Proposition~\ref{prop:eta}, showing that knockoff* can expect no more than $O(k)$ rejections when there are fewer than $k$ large log-odds.

\begin{proposition}
\label{prop:bigokd}
    Fix a significance level $\alpha>0$ and margin $\delta > 0$. Conditional on $\eta_{(k)} < -\log (\alpha + \delta)$, the expected number of rejections for any knockoff procedure at FDR significance level $\alpha$ is upper bounded by $C_1(\alpha,\delta)k + C_2(\alpha,\delta)$, where $C_1$ and $C_2$ are constants that depend only on $\alpha$ and $\delta$.
    In particular, if $d_1 \to \infty$ and
    if
    \[
    \eta_{(\lceil cd_1 \rceil)} \toProb 0, \text{ for all } c>0,
    \]
    then $\TPR \to 0$ for any knockoff procedure at any FDR significance level.
\end{proposition}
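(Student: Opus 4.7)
The plan is to reduce to the knockoff* procedure and bound $\hk$ by coupling its driving Bernoulli sequence to one with a favorable (negative) drift past index $k$. By Proposition~\ref{prop:knockoffstaropt}, it suffices to analyze the knockoff* procedure, and since $R \leq 1+\hk$ by \eqref{eq:Rformula}, the task reduces to bounding $\EE[\hk]$ on the event $\{\eta_{(k)} < -\log(\alpha+\delta)\}$. Conditionally on $\xi$ and $|\tbeta|$, the variables $Z_j' := 1\{\tp_{[j]}=1\}$ are independent $\mathrm{Bern}(q_j)$ with $q_j = 1/(1+e^{\eta_{(j)}})$, and $\hk$ is the last time the walk $S_j = \sum_{i=1}^j(p - Z_i')$ exceeds $1-p$, where $p = \alpha/(1+\alpha)$, exactly as in the proof of Proposition~\ref{prop:eta}.

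The condition $\eta_{(k)} < -\log(\alpha+\delta)$ forces $q_j \geq q^* := (\alpha+\delta)/(1+\alpha+\delta) > p$ for every $j \geq k$, with margin $\mu := q^* - p = \delta/((1+\alpha)(1+\alpha+\delta)) > 0$. Using a common Uniform$(0,1)$, I couple each $Z_j'$ for $j \geq k$ to an auxiliary $\tilde Z_j \sim \mathrm{Bern}(q^*)$, independent across $j$, with $\tilde Z_j \leq Z_j'$ almost surely. The resulting walk $\tilde S_j$ agrees with $S_j$ for $j < k$ and dominates it for $j \geq k$, so $\hk$ is dominated by $\tilde\hk := \sup\{j: \tilde S_j \geq 1-p\}$, a last exit time that is almost surely finite because past index $k$ the walk has i.i.d.\ increments of mean $-\mu < 0$ and bounded range $[-(1-p), p]$.

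To bound $\EE[\tilde \hk] \leq \sum_{j \geq 1} \PP(\sup_{j' \geq j}\tilde S_{j'} \geq 1-p)$, I split the sum at $j = k$. The $j \leq k$ terms contribute at most $k$. For $j > k$, using $\tilde S_k \leq pk$ deterministically and setting $V_m := \tilde S_{k+m} - \tilde S_k$, the summand is bounded by $\PP(\sup_{m \geq j-k} V_m \geq 1-p-pk)$. A Hoeffding bound gives $\PP(V_{j-k} > -\mu(j-k)/2) \leq \exp(-\mu^2(j-k)/2)$; on the complementary event, any subsequent return above $1-p-pk$ requires the walk to climb from $V_{j-k}$ by at least $\mu(j-k)/2 + 1 - p - pk$, which by Doob's maximal inequality applied to the exponential martingale $e^{\theta^* V_m}$ (with $\theta^* > 0$ solving $\EE[e^{\theta^*(p-\tilde Z)}] = 1$) has probability at most $\exp(-\theta^*(\mu(j-k)/2 + 1-p-pk))$. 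Both terms decay exponentially once $j-k \gtrsim k/\mu$, so summing gives a tail of order $O(k/\mu) + O(1)$, yielding $\EE[\tilde\hk] \leq C_1(\alpha,\delta)\,k + C_2(\alpha,\delta)$ with $C_1 = 1 + 2\alpha(1+\alpha+\delta)/\delta$.

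For the asymptotic statement, fix any $c > 0$ and any $\delta \in (0, 1-\alpha)$ so that $-\log(\alpha+\delta) > 0$. By hypothesis, $\PP(E_c) \to 1$ for $E_c := \{\eta_{(\lceil cd_1\rceil)} < -\log(\alpha+\delta)\}$. The finite-sample bound yields $\EE[R \cdot 1_{E_c}]/d_1 \leq (1 + C_1\lceil cd_1 \rceil + C_2)/d_1 \to C_1 c$; off $E_c$, the crude bound $(R-V)/d_1 \leq 1$ combined with $\PP(E_c^c) \to 0$ contributes $o(1)$. Hence $\limsup \TPR \leq C_1 c$ for every $c > 0$, forcing $\TPR \to 0$. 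The principal obstacle is the last exit time estimate: the $k$ initial steps of $\tilde S$ are essentially unrestricted and can leave $\tilde S_k$ as large as $pk$, so the proof must demonstrate that a negative-drift walk cannot recover above $1-p$ more than $O(k)$ further times in expectation. The combination of concentration (Hoeffding at a single time) and an exponential martingale bound (Doob on the subsequent supremum) is what makes this step go through.
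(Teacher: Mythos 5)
Your proof is correct and follows the same overall strategy as the paper: reduce to the knockoff* procedure via Proposition~\ref{prop:knockoffstaropt}, express the number of rejections in terms of the last-exit time of the random walk $S_j = \sum_{i\le j}(p - Z_i')$ above the level $1-p$, couple the increments past index $k$ to an i.i.d.\ Bernoulli$(q^*)$ sequence with constant negative drift $-\mu$, and then argue that this walk makes only $O(k)$ further excursions above the barrier. Where you diverge from the paper is in the tail estimate for the last-exit time. The paper bounds $\PP(\text{last exit} = r)$ by intersecting two independent events — being within $O(1)$ of the barrier at time $r$, which it controls via an exact binomial probability and Stirling's formula, and never again exceeding the barrier, which it handles with a bespoke random-walk lemma (Lemma~\ref{lemma:rwdrift}) about hitting probabilities via the exponential martingale root of $\EE e^{\psi_0\zeta}=1$. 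You instead bound $\PP(\text{last exit} \ge j)$ directly, using a Hoeffding bound to place $V_{j-k}$ below $-\mu(j-k)/2$ with high probability and then Doob's maximal inequality on the exponential martingale $e^{\theta^* V_m}$ to control the probability of a subsequent recovery. Both routes give geometric decay once $j-k \gtrsim k/\mu$, hence the same $O(k/\mu)$ tail contribution. Your argument is somewhat more economical and leans on standard concentration tools rather than Stirling estimates and a separate hitting-probability lemma, at the cost of less explicit control over the constant $C_2(\alpha,\delta)$; the paper's more laborious route is designed to produce the numeric constants used in Table~\ref{table:thmmain}. Your treatment of the asymptotic consequence is also fine and essentially matches how the paper applies the finite-sample bound elsewhere.
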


We defer the proof of this proposition to Section \ref{sec:proofs}, where we also give explicit formulae for the constants $C_1$ and $C_2$. For example, when $\alpha=0.05$ and $\alpha + \delta = \sqrt{0.05}$, we have $C_1 \leq 2.3$ and $C_2 \leq 40$. These constants are not optimal and better bounds can be obtained for specific values $k,d,\alpha,\delta$ by simulating the random walk with $Z_1 = \cdots = Z_{k-1}=0$ and $\eta_{(k)} = \cdots = \eta_{(d)} = -\log(\alpha+\delta)$. 

Whereas Proposition~\ref{prop:bigokd} bounds what is possible after whitening and observing $\xi$ and $|\tbeta|$, the next section analyzes when it is impossible to carry out the whitening step without dramatic information loss that prevents the $\eta$ values from being large enough. We will show that this loss is determined by the eigen-decomposition of $\Sigma = (X^\tran X)^{-1}$, and characterize how large the signal must be to overcome this information loss and achieve nontrivial TPR, and illustrate our analysis with examples.

\subsection{Lower bounds for $\Delta$: how much noise must we add?}\label{sec:choosedelta}

Because the power of any knockoff procedure can be upper bounded by the knockoff* procedure from the previous section, understanding the best attainable power of knockoffs is a matter of investigating the joint distribution of the $\eta_1,\ldots,\eta_d$ values, which are independent of $\xi$ with distribution
\begin{equation}\label{eq:etadist}
\eta_j  \;=\; \left|\,\frac{2\beta_j\tbeta_j}{\sigma^2\Delta_{jj}}\,\right| \;\simind\; |\,\cN(\mu_j, 2\mu_j)\,|, \quad \text{ for } \mu_j = \frac{2\beta_j^2}{\sigma^2\Delta_{jj}}.
\end{equation}
The distribution of $\eta_j$ depends on the ratio of two quantities: the signal-to-noise ratio (SNR) $\beta_j/\sigma$, and the diagonal entry $\Delta_{jj}$. Since $\eta_j/\sqrt{2\mu_j} \sim |\cN(\sqrt{\mu_j/2}, 1)|$ is stochastically increasing in $\mu_j$, so is $\eta_j$. As a result, the {\em marginal} power of the knockoff* procedure is strictly increasing in each $\mu_j$. We will be primarily interested in the standardized case where $\Sigma_{jj} = \sigma^2 = 1$; in that case $\Delta_{jj} = \Var(\tbeta_j) = 1 + \Var(\omega_j)$, so knockoffs will (potentially) have power when the squared SNR is large compared to the whitening noise. 

Although most knockoff methods seek roughly to balance the whitening noise across the coordinates, if we want to bound the power of the best possible knockoff method we must consider the possibility that the analyst will favor some variables over others. For example, it is always possible to drive a single $\Delta_{jj} \to \Sigma_{jj}$, but only at the cost of driving $\Delta_{kk} \to \infty$ for all $k$ with $\Sigma_{jk} \neq 0$, since we must have
\[
\Var(\omega_k) \;\geq\; \frac{\text{Cov}^2(\omega_j,\omega_k)}{\text{Var}(\omega_j)} \;=\; \frac{\Sigma_{jk}^2}{\Delta_{jj}-\Sigma_{jj}} .
\]

Thus, while it is always possible to make any one $\Delta_{jj}$ small, it is sometimes impossible to make them all small at the same time. In this section we derive explicit lower bounds for the order statistics of $\Delta_{11},\ldots,\Delta_{dd}$ as functions of $\Sigma$, uniformly over all diagonal matrices $\Delta \succeq \Sigma$.
Let $\lambda_1$ be the leading eigenvalue of $\Sigma$, with eigenvector $u_1$. 
For any subset $S\subset \{1,\ldots,d\}$, we must have $\Delta_{S,S} \succeq \Sigma_{S,S}$, for the corresponding matrix blocks. As a result,
\[
\sum_{j\in S}\Delta_{jj}u_{1, j}^2 \;=\; u_{1, S}^\tran \,\Delta_{S,S}\,u_{1, S} \;\geq\;  \sum_{\ell =1}^d\lambda_{\ell}(u_{\ell, S}^\tran \,u_{1, S})^2 \;\geq\; \lambda_1 \|u_{1, S}\|_2^4.
\]
Therefore
\[
\|u_{1, S}\|^2_2 \max_{j\in S}\Delta_{jj}\;\geq\;\sum_{j\in S}\Delta_{jj} \,u_{1, j}^2 \;\geq\; \lambda_1\|u_{1, S}\|_2^4,
\]
and we have
\begin{equation}
\label{eq:dinverselowerbound}
    \max_{j\in S}\Delta_{jj} \;\geq\; \lambda_1\|u_{1,S}\|_2^2.
\end{equation}

To interpret equation~\eqref{eq:dinverselowerbound}, suppose that the entries of the unit vector $u_1$ are all at least as large as $c/\sqrt{d}$, for some $c \in (0, 1]$. Then for any block $\Delta_{S,S}$, we must have $\max_{j\in S} \Delta_{j,j} \geq c^2\lambda_1|S|/d$. Taking $S$ to be the set with the $k$ smallest entries, it follows that the $k$th smallest diagonal entry of $\Delta$ is at least $c^2\lambda_1 k/d$. Extending this argument yields explicit lower bounds for order statistics of $\Delta_{1,1}, \ldots, \Delta_{d,d}$, in terms of $\Sigma$.

\begin{proposition}\label{prop:Deltabound}
Let $\lambda_1 \geq \lambda_2 \geq \dots \lambda_d \geq 0$ be the eigenvalues of  $\Sigma$, with corresponding eigenvectors $u_1, \dots, u_d$, and let $u_{\ell,(1)}^2 \leq \cdots \leq u_{\ell, (d)}^2$ be the order statistics of $u_{\ell,1}^2, \ldots, u_{\ell,d}^2$, and let $\Delta_{(1,1)} \leq \cdots \leq \Delta_{(d,d)}$ be the order statistics of the diagonal entries of $\Delta$. Then for all $k=1,\ldots,d$, we can lower bound
\begin{equation}\label{eq:Deltabound}
 \Delta_{(k,k)} \;\geq\; b_k(\Sigma) \;=\; \max_{\ell \leq d} \,\lambda_{\ell} \sum_{j=1}^k u_{\ell, (j)}^2.
\end{equation}
\end{proposition}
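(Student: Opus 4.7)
The plan is to generalize the displayed argument leading to equation~\eqref{eq:dinverselowerbound} from the single eigenvector $u_1$ to every eigenvector $u_\ell$, and then to convert the resulting subset-based bound into a bound on the order statistic $\Delta_{(k,k)}$ by choosing $S$ to be the index set of the $k$ smallest diagonal entries.

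First I would observe that the argument the authors give for $u_1$ used only PSD-ness of $\Sigma$ (equivalently, non-negativity of all eigenvalues) together with $\Delta_{S,S} \succeq \Sigma_{S,S}$. Writing $\Sigma = \sum_{m=1}^d \lambda_m u_m u_m^\tran$ and restricting to coordinates in $S$, the same chain of inequalities yields, for any $\ell \leq d$ and any $S\subset\{1,\ldots,d\}$ with $u_{\ell,S} \neq 0$,
\begin{equation*}
\|u_{\ell,S}\|_2^2 \,\max_{j\in S}\Delta_{jj} \;\geq\; \sum_{j\in S}\Delta_{jj}\, u_{\ell,j}^2 \;=\; u_{\ell,S}^\tran \Delta_{S,S} u_{\ell,S} \;\geq\; u_{\ell,S}^\tran \Sigma_{S,S} u_{\ell,S} \;\geq\; \lambda_\ell \|u_{\ell,S}\|_2^4,
\end{equation*}
where the last step drops all cross-terms $\lambda_m (u_{m,S}^\tran u_{\ell,S})^2$ for $m\neq \ell$ (which are non-negative). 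Dividing through gives the subset bound $\max_{j\in S}\Delta_{jj} \geq \lambda_\ell \|u_{\ell,S}\|_2^2$, valid for every $\ell$.

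Second, to pass from subsets to order statistics, fix $k$ and let $S^*$ be the set of indices corresponding to the $k$ smallest diagonal entries of $\Delta$, so that $\max_{j\in S^*} \Delta_{jj} = \Delta_{(k,k)}$. Applying the subset bound to $S^*$ gives $\Delta_{(k,k)} \geq \lambda_\ell \|u_{\ell,S^*}\|_2^2$. Now $\|u_{\ell,S^*}\|_2^2 = \sum_{j\in S^*} u_{\ell,j}^2$ is a sum of $k$ of the numbers $u_{\ell,1}^2,\ldots,u_{\ell,d}^2$, hence is at least their $k$ smallest, namely $\sum_{j=1}^k u_{\ell,(j)}^2$. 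Combining,
\begin{equation*}
\Delta_{(k,k)} \;\geq\; \lambda_\ell \sum_{j=1}^k u_{\ell,(j)}^2 \qquad \text{for every } \ell \leq d,
\end{equation*}
and maximizing the right-hand side over $\ell$ yields $\Delta_{(k,k)} \geq b_k(\Sigma)$, as claimed.

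The only delicate point is the case $u_{\ell,S^*}=0$, which would make the division by $\|u_{\ell,S^*}\|_2^2$ invalid; but then $\sum_{j=1}^k u_{\ell,(j)}^2 = 0$ as well (since those are the $k$ smallest of $d$ squared entries that include the zeros on $S^*$), so the $\ell$th term in the max contributes trivially. The argument is essentially a slight generalization of the computation the authors already carried out, so I expect no real obstacle beyond bookkeeping.
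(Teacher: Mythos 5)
Your proposal is correct and follows essentially the same argument as the paper: generalize the lower bound derivation from $u_1$ to an arbitrary $u_\ell$, apply it to the set $S$ of indices of the $k$ smallest diagonal entries of $\Delta$, bound $\|u_{\ell,S}\|_2^2$ below by the sum of the $k$ smallest squared entries of $u_\ell$, and maximize over $\ell$. The only addition is your explicit handling of the degenerate case $u_{\ell,S}=0$, which the paper leaves implicit but is indeed harmless for exactly the reason you give.
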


\begin{proof}
 Note that in the argument for the lower bound \eqref{eq:dinverselowerbound}, we can just as well replace $\lambda_1, u_1$ with $\lambda_\ell, u_\ell$ for any $\ell=1,\ldots,d$. If $S$ is the index set for the $k$ smallest diagonal entries of $\Delta$, then
 \[
  \Delta_{(k,k)} \;=\; \max_{j\in S}\Delta_{jj} \;\geq\; \,\lambda_{\ell} \|u_{\ell, S}\|^2 \;\geq\; \,\lambda_{\ell} \sum_{j=1}^k u_{\ell, (j)}^2.
 \]
 Maximizing the lower bound over $\ell=1,\ldots,d$ gives the result.
\end{proof}

By construction, we have $b_1(\Sigma) \leq \cdots \leq b_d(\Sigma)$, and also $b_j(\Sigma) \geq \frac{j}{k} b_k(\Sigma)$ for all $j > k$. Continuing the example from above, we will have $b_k(\Sigma) \geq c^2 \lambda_1 k/d$. 

In the MCC case with $\lambda_1 = 1 + \rho(d-1)$ and $u_1 = \mathbf{1}_d /\sqrt{d}$, we have exactly $b_k(\Sigma) = \lambda_1 k /d \geq \rho k$ for all $k = 1,\ldots,d$. These bounds are nearly tight: for any size-$k$ subset $S$, the leading eigenvalue of $\Sigma_{S,S}$ is 
\[
\lambda_1^S \;=\; 1 + \rho(k-1) \;\leq\; \rho k + 1.
\]
Then for any $\epsilon > 0$, we can set 
\[
\Delta_{S,S} \;=\; (\lambda_1^S + \epsilon) I_k \;\succeq\; \Sigma_{S,S},
\]
but if we take $\epsilon \to 0$ we must have $\Delta_{jj} \to \infty$ for every $j \in S^\setcomp$. In other words, we can ``rescue'' our favorite $k$ variables from having $\Delta_{jj} > \rho k$, as long as we are willing to give up completely on the other $d-k$. 

\subsection{Uniform bounds on the power of fixed-$X$ knockoffs}\label{sec:boundpower}

Theorem~\ref{thm:main} combines Propositions~\ref{prop:bigokd} and \ref{prop:Deltabound} to obtain a finite-sample upper bound on the power of any fixed-$X$ knockoff procedure. We first give one more technical lemma:

\begin{restatable}{lemma}{Nalpha}
\label{lem:Nalpha}
Suppose that $z_i \sim \cN(\mu_i, 2\mu_i)$ for $i=1,2,\ldots$, where $0\leq \mu_i\leq \frac{-k\log \alpha}{2(k + i)}$, for some integer $k \geq 1$ and $\alpha \in (0,1)$. Then the expected number of $z_i$ values with $|z_i| > -\frac{1}{2}\log\alpha$ is bounded by $C_3(\alpha) k$.
\end{restatable}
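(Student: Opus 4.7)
The plan is to bound the expected count directly via the union bound $\sum_{i=1}^\infty \PP(|z_i| > t)$ with $t = -\tfrac{1}{2}\log\alpha > 0$, using three ingredients: the Gaussian tail inequality, monotonicity of the resulting exponent, and a geometric series estimate.

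First, writing $z_i = \mu_i + \sqrt{2\mu_i}\,Z$ with $Z \sim \cN(0,1)$, the hypothesis $\mu_i \leq \tfrac{kt}{k+i} \leq \tfrac{kt}{k+1} \leq t$ (using $k \geq 1$) places $\mu_i$ in the regime where $t - \mu_i \geq 0$, so the standard Gaussian tail bound gives
$$\PP(|z_i| > t) \;\leq\; 2\exp\!\left(-\frac{(t-\mu_i)^2}{4\mu_i}\right).$$
The function $\mu \mapsto (t-\mu)^2/(4\mu)$ is strictly decreasing on $(0,t]$ (its derivative is $-(t^2-\mu^2)/(4\mu^2)$), so I can substitute $\mu_i$'s upper bound into the exponent; a short algebraic simplification then yields $\PP(|z_i| > t) \leq 2\exp\!\bigl(-\tfrac{t i^2}{4k(k+i)}\bigr)$.

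Next, I would split the sum at $i = k$. For $i \leq k$ the exponential bound is not helpful (near $i=1$ the exponent is of order $t/k$, which need not be large), so I simply use the trivial bound $\PP(|z_i| > t) \leq 1$, contributing at most $k$ in total. For $i > k$, the estimate $k + i \leq 2i$ simplifies the exponent to at most $-ti/(8k)$, reducing the tail to a geometric series with common ratio $e^{-t/(8k)}$. To handle it, I would invoke the elementary inequality $1/(1 - e^{-x}) \leq 1 + 1/x$ (which follows immediately from $e^x \geq 1+x$), bounding the geometric tail by $2(1 + 8k/t)$. Combining the two pieces gives an overall bound of the form $k(1 + 16/t) + 2$, which collapses to $C_3(\alpha)\,k$ with, for instance, $C_3(\alpha) = 3 - 32/\log\alpha$, using $k \geq 1$ to absorb the additive constant.

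The main obstacle is really the split at $i = k$: the Gaussian bound is weak for small $i$, so to keep the leading constant linear in $k$ (rather than exponentially large in $k$) one must give up on the exponential bound for the first $k$ terms and sum trivially, reserving the exponential decay for the tail. Everything else is bookkeeping with elementary inequalities, and no probabilistic subtlety beyond independent one-dimensional Gaussian tails is needed.
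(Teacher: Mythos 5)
Your proof is correct, but it takes a genuinely different route from the paper's. The paper observes that the exact tail probability $\PP(|z_i| > c)$ is increasing in $\mu_i$ on $[0,c]$, substitutes the upper bound $\mu_i \leq c/(1+i/k)$, dominates the resulting sum $\sum_i f(i/k)$ by $k\int_0^\infty f(t)\,dt$ (using that $f$ is decreasing), and then evaluates the integral numerically — obtaining tight constants like $C_3(0.05)\leq 1.05$. You instead first pass to the Chernoff-type Gaussian tail bound $\PP(|z_i|>t)\leq 2e^{-(t-\mu_i)^2/(4\mu_i)}$, show the exponent $(t-\mu)^2/(4\mu)$ is monotone so the worst-case $\mu_i$ can be plugged in, and then split the sum at $i=k$: trivial bound for $i\leq k$ and a geometric series bound (via $1/(1-e^{-x})\leq 1+1/x$) for $i>k$. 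This yields a closed-form $C_3(\alpha)=3-32/\log\alpha$. Your approach is more elementary and self-contained (the paper does not justify the monotonicity of the exact tail probability, though it is true), and it gives an explicit formula for $C_3$; the paper's integral approach buys substantially sharper numerical constants (for $\alpha=0.05$, about $1.05$ versus your $\approx 13.7$), which matter for the concrete bounds reported in Table~\ref{table:thmmain}. Both correctly establish the lemma as stated.
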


Whereas Proposition~\ref{prop:bigokd} controls the number of rejections in terms of the number of large $\eta_j$ values, Lemma~\ref{lem:Nalpha} controls the number of large $\eta_j$ values we can get by chance from $\mu_j$ values falling below $-\frac{1}{2}\log\alpha$. We are now ready to prove our main result.

\begin{restatable}{theorem}{main}
\label{thm:main}
Let $\beta_{(1)}^2\geq \dots\geq \beta_{(d)}^2$ be the order statistics of $(\beta_1^2, \dots, \beta_d^2)$ and let $b_1(\Sigma), \ldots, b_d(\Sigma)$ be defined as in \eqref{eq:Deltabound}. For any target FDR level $\alpha>0$, let $k$ be the smallest integer such that
\begin{equation}
\label{eq:finitesamplecondition}
\frac{2\beta_{(k)}^2}{\sigma^2 b_k(\Sigma)} \;<\; -\frac{1}{2}\log\alpha,
\end{equation}
Then the expected number of rejections for any knockoff procedure at FDR level $\alpha$ is upper bounded by $C_1^*(\alpha)k + C_2^*(\alpha)$, where $C_1^*$ and $C_2^*$ depend only on $\alpha$. 
\end{restatable}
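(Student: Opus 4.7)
The plan is to combine Propositions~\ref{prop:knockoffstaropt}, \ref{prop:bigokd}, and \ref{prop:Deltabound} with Lemma~\ref{lem:Nalpha}, using the threshold $t^* = -\tfrac{1}{2}\log\alpha$. First, by Theorem~\ref{thm:equivalence} and Proposition~\ref{prop:knockoffstaropt}, it suffices to bound $\EE[R]$ for knockoff* applied with an arbitrary valid diagonal matrix $\Delta \succeq \Sigma$; I fix any such $\Delta$ and write $\mu_j = 2\beta_j^2/(\sigma^2 \Delta_{jj})$, so that $\eta_j \simind |\cN(\mu_j, 2\mu_j)|$.

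The heart of the argument is to control the upper order statistics $\mu_{(1)} \geq \cdots \geq \mu_{(d)}$. Suppose $m = \#\{j : \mu_j \geq t\}$, and sort the $m$ qualifying indices in ascending order of $\Delta_{jj}$. The $l$-th such $\Delta$ value is at least the $l$-th smallest among all $d$ diagonal entries and hence $\geq b_l(\Sigma)$ by Proposition~\ref{prop:Deltabound}, and it is paired with some $\beta^2$ satisfying $\beta^2 \geq (\sigma^2 t/2)\Delta \geq (\sigma^2 t/2) b_l(\Sigma)$. Since $b_l$ is non-decreasing in $l$, at least $m - l + 1$ of the paired $\beta^2$'s are $\geq (\sigma^2 t/2) b_l$, which gives
\[
\beta_{(l)}^2 \;\geq\; \tfrac{\sigma^2 t}{2}\, b_{m-l+1}(\Sigma), \qquad l = 1, \ldots, m.
\]
Applying this at $l = k$ (valid when $m \geq k$), and combining the theorem's hypothesis $2\beta_{(k)}^2/(\sigma^2 b_k) < t^*$ with the monotonicity $b_j \geq (j/k)b_k$ for $j \geq k$, yields
\[
\mu_{(m)} \;<\; \frac{k\,t^*}{m-k+1} \quad \text{for all } m \geq 2k-1,
\]
and in particular $\#\{j : \mu_j \geq t^*\} \leq 2k - 2$.

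Let $\pi$ be a permutation sorting $\mu_{\pi(1)} \geq \cdots \geq \mu_{\pi(d)}$. Since $\PP(|\cN(\mu, 2\mu)| > t)$ is increasing in $\mu \geq 0$ for each fixed $t$, and since $\tmu_i := 2kt^*/(2k+i) = -(2k)\log\alpha/(2(2k+i))$ dominates $\mu_{\pi(2k+i)}$ for all $i \geq 1$ by the bound above, it follows that $\PP(\eta_{\pi(2k+i)} > t^*) \leq \PP(|z_i| > t^*)$ for $z_i \sim \cN(\tmu_i, 2\tmu_i)$. Applying Lemma~\ref{lem:Nalpha} with parameter $2k$ then yields $\EE[\#\{i \geq 1 : \eta_{\pi(2k+i)} > t^*\}] \leq 2C_3(\alpha)k$. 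Since the top $2k$ indices contribute at most $2k$ to the count, $N := \#\{j : \eta_j > t^*\}$ satisfies $\EE[N] \leq 2(1 + C_3(\alpha))\,k$. Taking $\delta = \sqrt{\alpha} - \alpha$ so that $-\log(\alpha + \delta) = t^*$, and applying Proposition~\ref{prop:bigokd} with its $k$ replaced by the random index $N+1$ (valid since $\eta_{(N+1)} \leq t^*$ by definition of $N$), I obtain $\EE[R] \leq C_1(\alpha, \delta)(\EE[N] + 1) + C_2(\alpha, \delta) \leq C_1^*(\alpha)\,k + C_2^*(\alpha)$.

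The main obstacle is the rearrangement step: one must recognize that an adversarial analyst may pair the large lower bounds $b_l(\Sigma)$ with large $\beta^2$'s (rather than greedily matching small $\Delta$ with large $\beta^2$), which is why the bound on the ``strong'' count comes out to $2k-2$ rather than the naive $k-1$. A secondary technical point is that Proposition~\ref{prop:bigokd} is stated for deterministic $k$, so applying it at the random index $N+1$ requires inspecting the random walk underlying Proposition~\ref{prop:eta} to verify that the conditional bound extends to stopping times measurable with respect to $(\eta_1, \ldots, \eta_d)$.
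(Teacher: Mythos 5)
Your proof is correct and follows the paper's overall architecture --- reduce to the knockoff* oracle (Proposition~\ref{prop:knockoffstaropt}), bound how many $\mu_j$ can be large, control the chance fluctuations via Lemma~\ref{lem:Nalpha}, and finish with Proposition~\ref{prop:bigokd} applied conditionally on $(\xi,|\tbeta|)$. Where you genuinely diverge is in the combinatorial core: the paper fixes an ascending ordering of $\Delta_{jj}$ (the sign in ``$\Delta_{11}\geq\cdots\geq\Delta_{dd}$'' is a typo for $\leq$) and separately counts the $k$ small-$\Delta$ positions and the at most $k-1$ large-$\beta$ indices, whereas you derive a direct pigeonhole bound on the order statistics $\mu_{(m)}$ by pairing the $l$-th smallest qualifying $\Delta$ with the floor $b_l(\Sigma)$ and concluding $\beta_{(l)}^2\geq \tfrac{\sigma^2 t}{2}b_{m-l+1}(\Sigma)$. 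Both yield essentially the same conclusion; yours gives the clean bound $\#\{j:\mu_j\geq t^*\}\leq 2k-2$ on the ``deterministic'' count, slightly sharper than the paper's $2k-1$, but you then invoke Lemma~\ref{lem:Nalpha} with parameter $2k$ (giving $2C_3k$) where parameter $k$ would have sufficed: your own bound gives $\mu_{(2k-1+i)}<\tfrac{kt^*}{k+i}=\tfrac{-k\log\alpha}{2(k+i)}$ for $i\geq 1$, exactly the lemma's hypothesis with parameter $k$, so the factor-of-2 in your $C_1^*$ is avoidable. On your secondary concern about applying Proposition~\ref{prop:bigokd} at the random index $N+1$: no new argument about stopping times is required, because the proposition's proof works conditionally on $(\xi,|\tbeta|)$, which fixes $\eta$ and hence $N$ deterministically; the paper takes the same route by conditioning on $N_\alpha=n$ and then taking expectation, and the strict-versus-weak inequality at $\eta_{(N+1)}$ holds almost surely by continuity.
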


To sketch the proof, in expectation there are at most $(2 + C_3)k$ indices for which $\eta_j > -\frac{1}{2}\log\alpha$: $k$ with large $\beta_j$, $k$ more with small $\Delta_{jj}$, and $C_3 k$ more by chance; then applying Proposition~\ref{prop:bigokd} gives the final result.  As discussed in the proof of Theorem~\ref{thm:main}, if we replace $\beta_{(k)}$ with $\beta_{(1)}$ in \eqref{eq:finitesamplecondition} then we eliminate the portion with large $\beta_j$, so $2+C_3$ improves to $1+C_3$, giving correspondingly smaller bounds. Table~\ref{table:thmmain} shows the bounds at several $\alpha$ levels of interest.

\begin{table}[t]
\centering
\caption{Upper bounds from Theorem~\ref{thm:main} on $\EE R$ if $\displaystyle{\frac{2\beta_{(\ell)}^2}{\sigma^2 b_k(\Sigma)} \leq -\frac{1}{2}\log\alpha}$, given for $\ell = k$ and $\ell = 1$.}
\vspace{2mm}
\begin{tabular}{@{}cccc@{}}\toprule
 & $\alpha = 0.05$ & $\alpha = 0.1$ & $\alpha = 0.2$\\\midrule
Bound if $\ell = k$ & $7.0 k + 40$ & $10.6 k + 41$ & $19.1 k + 61$\\[5pt]
Bound if $\ell = 1$ & $4.7k + 43$ & $7.4k + 45$ & $14.3k + 66$\\

\bottomrule
\end{tabular}
\label{table:thmmain}
\end{table}

The dimension $d$ does not appear in the inequality \eqref{eq:finitesamplecondition}, unless it plays a role in determining $b_k(\Sigma)$. For MCC, $b_k(\Sigma) \approx \rho k$, so the number of rejections as $d\to\infty$ grows no faster than the square of the SNR. If we set the signal strength at the Bonferroni threshold $\beta_0 = \sigma \sqrt{2\log d}$, then solving \eqref{eq:finitesamplecondition} gives $k = -8\log d\,/\,\rho\log\alpha$, which we can plug directly into the bounds in the second row of Table~\ref{table:thmmain}. As a consequence, the TPR of any knockoffs method will tend to 0 if $d_1/\log d \to 0$. By contrast, if $d_1/d$ converges to a nonzero constant, then we must have $\beta_0/\sigma = O(\sqrt{d})$ to achieve nontrivial TPR in the limit.

More generally, to apply Theorem~\ref{thm:main} for a given covariance matrix $\Sigma$ at any signal strength, we can calculate the sequence $b_k(\Sigma)$ using the formula \eqref{eq:Deltabound}, and plug the smallest $k$ satisfying \eqref{eq:finitesamplecondition} into Table~\ref{table:thmmain}.

We can visualize the logic of Theorem~\ref{thm:main} and obtain improved bounds for specific examples by simulation. If all nonzero $\beta_j$ values are set to a common value $\beta_0$, then the $k$th largest $\mu_{(k)}$ is no larger than $2\beta_0^2/\sigma^2 b_k(\Sigma)$. To simulate the knockoff* procedure, we sample the $\eta_j$ values according to \eqref{eq:etadist}, sort them in descending order, and then simulate the resulting $\smash{\hFDP_k^\wh}$ process.  Figure~\ref{fig:simeta} illustrates two examples, with $d = 30,000$ and $d = 1,000,000$, and with $\beta_0$ set at the Bonferroni threshold $\sigma \sqrt{2\log d}$. We also set $d_1=d$ to simulate under the most optimistic conditions.\footnote{Alternatively we could choose a value for $d_1$ and set $\mu_{(k)} = 0$ for $k > d_1$, but it makes little difference if $\mu_{(d_1)} \ll -\log \alpha$.} We simulate the case $b_k(\Sigma) = 0.1 k/d$ and show $\mu_{(k)}$ along with a single realization of the $\eta_{(k)}$ and $\smash{\hFDP_k^\wh}$ processes. In addition we plot a histogram showing the distribution of the number of rejections for $\alpha = 0.05$.

\begin{figure}
    \centering
    \resizebox{\textwidth}{!}{
    \begin{tabular}{cc}
    \includegraphics[width=0.49\textwidth]{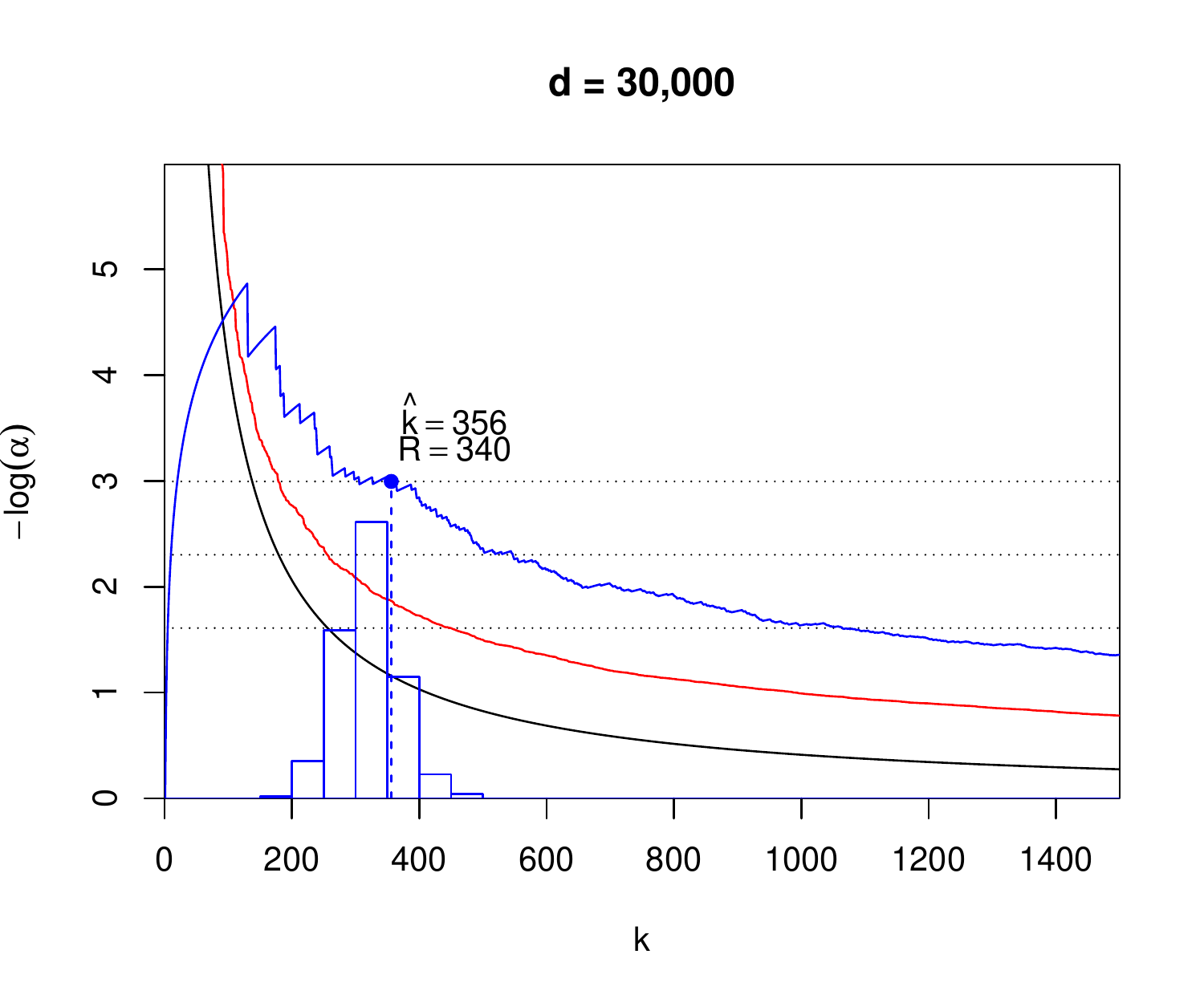} &
    \includegraphics[width=0.49\textwidth]{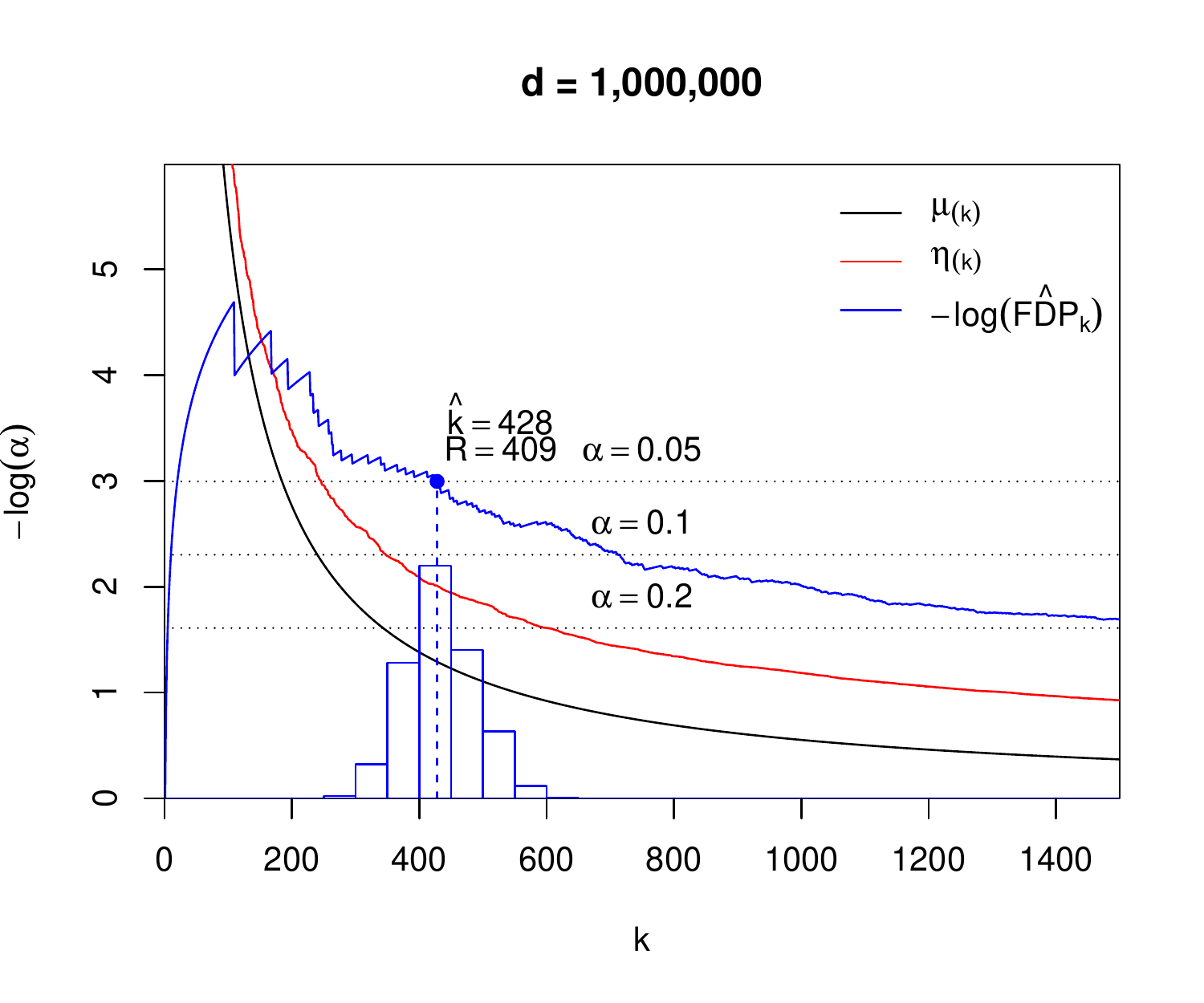} \\
    \end{tabular}
    }
    \caption{Simulation of the best possible knockoff method in the setting of Theorem~\ref{thm:main}, where the analyst has full knowledge of $\beta$ for both selecting $\Delta$ and carrying out the exploratory stage. We simulate under $b_k(\Sigma) = c^2 \lambda_1 k/d$, with $c^2 = 0.2$ and $\lambda_1 = d/2$, but these values can be substituted with $b_k(\Sigma)$ for any $\Sigma$ of interest. The signal strength is at the Bonferroni threshold, with all nonzero coefficients equal to $\beta_0 = \sigma \sqrt{2\log d}$, and $d_1 = d$ to be as optimistic as possible. The black line shows upper bounds for $\mu_{(k)} \leq 4\log d/b_k(\Sigma)$, the red line shows a single realization of $\eta_{(k)}$ where $\eta_j \simind |\cN(\mu_j, 2\mu_j)|$, and the blue line shows a realization of $-\log(\hFDP_k^\wh)$. The blue histogram shows the distribution of rejections over 1000 simulations.}
    \label{fig:simeta}
\end{figure}

Continuing the example from the previous section where $b_k(\Sigma) \geq c^2\lambda_1 k/d$, suppose that all nonzero coefficients take the same value $\beta_0$. In that case, Theorem~\ref{thm:main} implies that {\em no} knockoffs method can make more than $O\left(\frac{\beta_0^2 d}{\lambda_1}\right)$ rejections. If $\beta_0 = \sqrt{2r\log d}$ then no knockoffs method can make more than $O\left(\frac{d\log d}{\lambda_1}\right)$ rejections, so knockoffs must have $\TPR \to 0$ if $\lambda_1 \gg \frac{d\log d}{d_1}$. Thus, if $d_1/d$ converges to a nonzero constant, then $\TPR \to 0$ for knockoffs whenever $\lambda_1 \gg \log d$. Thus, knockoffs can underperform Bonferroni even in settings where the first eigenvalue asymptotically accounts for a vanishingly small fraction of the total variance. By contrast, if $d_1/d = o(\log d)$, then Theorem~\ref{thm:main} does not necessarily suggest we will have a problem. As we will discuss in the next section, Theorem~\ref{thm:main} gives especially optimistic predictions when $d_1 \ll d$ because the oracle analyst is too powerful.

\subsection{Stronger results with random non-null indices}\label{sec:boundpowerrandom}

While the bounds from Theorem~\ref{thm:main} are nontrivial, they are still highly optimistic, since they allow the analyst to optimize not only the exploratory stage (equivalent to choosing perfect $W$-statistics), but also the choice of $\Delta$ (equivalent to the choice of knockoff matrix $\tX$), with full knowledge of the coefficients. In particular, suppose that the true coefficient vector is sparse with $S_1 = \{j:\; \beta_j \neq 0\}$ and $S_0 = S_1^\setcomp$. Then the analyst loses nothing by sending $\Delta_{jj} \to \infty$ for all $j \in S_0$, effectively reducing the multiplicity of the problem to $d_1 \ll d$ by giving up on the null variables before the method has even begun. Whereas we might find it plausible that the exploratory stage is highly efficient, especially when the signals are very strong and the $W$-statistics are designed well, it is much less plausible that the analyst can discard all of the null variables even before observing any data. In any case, such a strategy is contrary to the spirit of knockoff methods that are actually in use, which seek to balance the $\Delta_{jj}$ (or $D_{jj}$) values, spreading power across all $d$ variables. 

We can force the oracle analyst into a more ``honest'' allocation of $\Delta_{jj}$ values by assuming the indices in $S_1$ are drawn uniformly at random from $\{1,\ldots,d\}$ only {\em after} $\Delta$ has been chosen. As before, we still assume $\beta$ is revealed to the analyst before the exploratory stage. Theorem~\ref{thm:mainrandom} gives a directly analogous result to Theorem~\ref{thm:main}, with a similar proof.

\begin{restatable}{theorem}{mainrandom}
\label{thm:mainrandom}
Let $\beta_{(1)}^2\geq \dots\geq \beta_{(d)}^2$ be the order statistics of $(\beta_1^2, \dots, \beta_d^2)$, and assume $\beta$ is a random permutation of the order statistics, independent of $\Delta$. Let $\pi_1 = d_1/d$ be the non-null proportion, and define $b_1(\Sigma), \ldots, b_d(\Sigma)$ as in \eqref{eq:Deltabound}. For any target FDR level $\alpha>0$, let $k$ be the smallest integer for which
\begin{equation}
\label{eq:finitesampleconditionrandom}
\frac{2\beta_{(k)}^2}{\sigma^2 b_{\lfloor k/\pi_1\rfloor}(\Sigma)} \;<\; -\frac{1}{2}\log\alpha,
\end{equation}
where $\lfloor\cdot\rfloor$ is the integer floor. Then the expected number of rejections for any knockoff procedure at FDR level $\alpha$ is upper bounded by $C_1^*(\alpha)k + C_2^*(\alpha)$, where $C_1^*$ and $C_2^*$ are the same constants as in Theorem~\ref{thm:main}. 
\end{restatable}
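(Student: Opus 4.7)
The plan is to adapt the proof of Theorem~\ref{thm:main} by replacing the deterministic counts of ``bad'' indices with expected counts under the random placement of the non-zero coefficients. I would begin with Proposition~\ref{prop:bigokd}, which reduces bounding the expected number of rejections to bounding the expected number of indices $j$ with $\eta_j > -\tfrac{1}{2}\log\alpha$. A useful simplification is that $\beta_j = 0$ forces $\mu_j = 0$ in \eqref{eq:etadist} and hence $\eta_j = 0$ almost surely, so only non-null indices $j \in S_1 = \{j : \beta_j \neq 0\}$ can contribute.

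Following the sketch of Theorem~\ref{thm:main}, I would then partition this count into (a) non-null indices with $\beta_j^2 > \beta_{(k)}^2$, (b) non-null indices with $\Delta_{jj} < b_{\lfloor k/\pi_1\rfloor}(\Sigma)$, and (c) non-null indices whose $\eta_j$ exceeds the threshold by chance despite $\mu_j < -\tfrac{1}{2}\log\alpha$. Contribution (a) is at most $k - 1$ deterministically. The key new ingredient handles (b): Proposition~\ref{prop:Deltabound} caps the total number of indices with $\Delta_{jj} < b_{\lfloor k/\pi_1\rfloor}(\Sigma)$ at $\lfloor k/\pi_1\rfloor - 1$, and since $S_1$ is a uniformly random size-$d_1$ subset independent of $\Delta$, its expected overlap with this set is at most $\pi_1(\lfloor k/\pi_1\rfloor - 1) \leq k$. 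Condition \eqref{eq:finitesampleconditionrandom} ensures that any non-null $j$ outside (a) and (b) has $\mu_j < -\tfrac{1}{2}\log\alpha$, so (a) and (b) together contribute $\leq 2k$ non-nulls in expectation.

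For (c), I would apply Lemma~\ref{lem:Nalpha} to the decreasing ordering of $\{\mu_j\}_{j\in S_1}$ after excluding the top $2k$ entries. The required decay, namely that the $(2k+i)$-th largest such $\mu_j$ is bounded by $-k\log\alpha/(2(k+i))$, should follow by the same splitting trick applied at thresholds $r = k + \lceil i/2\rceil$ for $\beta^2$ and $M = \lfloor(k+\lceil i/2\rceil)/\pi_1\rfloor$ for $\Delta$: this restricts attention to non-nulls with $\beta_j^2 \leq \beta_{(r)}^2$ and $\Delta_{jj} \geq b_M(\Sigma)$, whose $\mu_j$ is bounded above by $2\beta_{(r)}^2/(\sigma^2 b_M(\Sigma))$. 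The ratio $b_k(\Sigma)/k$ is non-decreasing in $k$ (since each $\lambda_\ell \sum_{m=1}^k u_{\ell,(m)}^2 / k$ is, as averaging over progressively larger order statistics), which together with condition \eqref{eq:finitesampleconditionrandom} and the non-increase of $\beta_{(j)}^2$ delivers the needed $1/(k+i)$ decay. Lemma~\ref{lem:Nalpha} then gives $\leq C_3(\alpha)k$ lucky indices in expectation, so combining all three contributions yields $\leq (2+C_3)k$ indices with $\eta_j > -\tfrac{1}{2}\log\alpha$, and Proposition~\ref{prop:bigokd} produces the same constants $C_1^*, C_2^*$ as in Theorem~\ref{thm:main}.

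The main obstacle is making the last step rigorous: Lemma~\ref{lem:Nalpha} is stated for a deterministic sequence of $\mu_i$'s, whereas the restricted order statistics here are random through the placement of $S_1$. The cleanest route is to condition on $S_1$, verify the decay bound on a high-probability event using hypergeometric concentration of $|S_1 \cap \{j: \Delta_{jj} < t\}|$, and absorb the small failure event into $C_2^*$ at the cost of possibly slightly larger constants. Alternatively, one can prove a mild strengthening of Lemma~\ref{lem:Nalpha} that tolerates $\mu_i$'s bounded only in expectation, using Fubini to move the outer expectation over $S_1$ inside the tail-probability sum; either route preserves the form of the bound.
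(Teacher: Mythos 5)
Your three-way decomposition into contributions (a), (b), (c) matches the paper's $N_\alpha^{(1)}$, $N_\alpha^{(2)}$ structure, and your treatment of (a) and (b) --- in particular, bounding the expected number of non-null indices among the $\lfloor k/\pi_1\rfloor - 1$ positions with small $\Delta_{jj}$ by $\pi_1\lfloor k/\pi_1\rfloor \leq k$ --- is exactly the key new ingredient in Theorem~\ref{thm:mainrandom}. The gap you flag in step (c), however, is a detour rather than an essential obstacle, and both of your proposed remedies (hypergeometric concentration, or a modified Lemma~\ref{lem:Nalpha} with $\mu_i$ bounded only in expectation) would likely forfeit the claim that the constants coincide with those in Theorem~\ref{thm:main}.

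The trouble stems from restricting to $j\in S_1$ before invoking Lemma~\ref{lem:Nalpha}, which makes the index enumeration random. Instead, fix $K = \lfloor k/\pi_1 \rfloor$, sort $\Delta_{jj}$ increasing WLOG, and bound the per-index contribution for every deterministic $j > K$. Writing $c = -\tfrac12\log\alpha$ and noting $\eta_j = 0$ a.s.\ when $\beta_j = 0$,
\[
\PP\left(\eta_j > c,\; \beta_j^2 \le \beta_{(k)}^2\right)
\;=\; \PP\left(\beta_j \neq 0,\; \beta_j^2 \le \beta_{(k)}^2\right)
\cdot \PP\left(\eta_j > c \;\big|\; \beta_j \neq 0,\; \beta_j^2 \le \beta_{(k)}^2\right).
\]
Since $\beta_j$ is a uniformly random draw from the order statistics, the first factor is at most $d_1/d = \pi_1$. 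In the conditional law, $\beta_j^2 \le \beta_{(k)}^2$ holds deterministically, so $\mu_j \le 2\beta_{(k)}^2/(\sigma^2 b_j(\Sigma)) \le (K/j)\cdot 2\beta_{(k)}^2/(\sigma^2 b_K(\Sigma)) < cK/j$ by \eqref{eq:finitesampleconditionrandom} and the monotonicity of $b_j(\Sigma)/j$; hence the second factor is at most $f\bigl((j-K)/K\bigr)$ with $f$ as in the proof of Lemma~\ref{lem:Nalpha}. Summing over $j = K+1, K+2, \ldots$ gives
\[
\EE\, N_\alpha^{(2)} \;\le\; \pi_1 \sum_{i \ge 1} f(i/K) \;\le\; \pi_1\, C_3(\alpha)\, K \;\le\; C_3(\alpha)\, k,
\]
recovering the constants of Theorem~\ref{thm:main} unchanged. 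No concentration argument is needed: the factor $\pi_1$ comes out of linearity of expectation applied to the indicator $1\{\beta_j \neq 0\}$, while the decay of the tail probability in $j$ is deterministic once you condition on $\beta_j$'s value. Your Fubini intuition (fix \#2) was pointing at this, but the crucial structure is the multiplicative per-index factorization, not an expectation bound on $\mu_j$.
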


We can tighten the bounds if we replace $\beta_{(k)}^2$ with $\beta_{(1)}^2$ in \eqref{eq:finitesamplecondition}, just as with Theorem~\ref{thm:main}, so the bounds in both rows of Table~\ref{table:thmmain} apply. Likewise, we can simulate better bounds just as we did in Theorem 2, with the modification that after calculating $\mu_{(k)} = 2\beta_0^2/\sigma^2b_k(\Sigma)$, we randomly zero out $d-d_1$ of the $\mu_{(k)}$ values. Algorithm~\ref{alg:t3knockoff} fully specifies a Monte Carlo estimate for the highest achievable $\TPR$ when the analyst only knows the order statistics $\beta_{(1)}^2,\ldots,\beta_{(d)}^2$ when choosing $\Delta$.

\begin{algorithm}[t]
\setstretch{1.1}
\SetAlgoLined
\KwData{Covariance matrix $\Sigma$,\quad noise level $\sigma^2$, \quad FDR level $\alpha$,\quad order statistics $\beta_{(1)}^2,\ldots,\beta_{(d)}^2$}
\KwResult{Maximum achievable TPR for any knockoff method (Monte Carlo estimate)}
Calculate lower bounds $b_1(\Sigma),\ldots,b_d(\Sigma)$ using \eqref{eq:Deltabound}\;
\For{$m = 1$ \KwTo $M$} {
    Sample $\beta_1^2,\ldots,\beta_d^2$ by permuting order statistics\;
    \For{$j = 1$ \KwTo $d$} {
        Calculate $\mu_j = 2\beta_j^2/\sigma^2 b_j(\Sigma)$\;
        Sample $\eta_j \sim |\cN(\mu_j, 2\mu_j)|$\;
        Sample $\tp_j \in \{1/2, 1\}$ with $\logit \PP(\tp_j = 1/2) = \eta_j$\;
    }
    Order $\tp_{[1]},\ldots,\tp_{[d]}$ in descending order of $\eta_j$\;
    Find $\smash{\hk = \max\{k:\;\hFDP_k^\wh \leq \alpha\}}$\;
    Calculate $\TPP_m = \frac{1}{d_1}\sum_{j=1}^{\hk} 1\{\tp_{[j]} = 1/2 \text{ and } \beta_j \neq 0\}$\;
}
\KwRet{$\widehat{\TPR} = \frac{1}{M}\sum_m \TPP_m$\;}
\caption{T3-knockoff* simulation}
\label{alg:t3knockoff}
\end{algorithm}

By placing this mild limitation on the analyst's otherwise-total omniscience, we dramatically strengthen the result from Theorem~\ref{thm:main} in sparse problems. To illustrate, consider the bound $\EE R \leq 4.7k + 43$ from Table~\ref{table:thmmain}, the $\beta_{(1)}$ version of the bound as applied to $\alpha = 0.05$. If we were applying Theorem~\ref{thm:main}, then to get $\TPR \geq 1/2$, we would only need $k = d_1/10$ small $\Delta_{jj}$ values. By contrast, applying Theorem~\ref{thm:mainrandom} we would need $k/\pi_1 = d/10$ small $\Delta_{jj}$ values, since we can expect all but a fraction $\pi_1$ of them to be wasted on null indices. Pragmatically, even $d/10$ is not enough because the bounds in Table~\ref{table:thmmain} are not optimal; to achieve $\TPR \geq 1/2$ we realistically need most of the $\Delta_{jj}$ values to be small. 

Corollary~\ref{cor:t3knockoff} formalizes the sense in which Theorem~\ref{thm:mainrandom} and Algorithm~\ref{alg:t3knockoff} both upper-bound the best achievable knockoff method. Let $\cR_\alpha(\hbeta,\Sigma)$ denote any valid implementation of fixed-$X$ knockoffs at FDR level $\alpha$, mapping its inputs to a randomized rejection set. Then we have

\begin{restatable}{cor}{maximin}
\label{cor:t3knockoff}
    Fix $\sigma^2 = 1$. Let $B_T$ denote the bound obtained from Theorem~\ref{thm:mainrandom}, and $B_A = \EE \,\widehat{\TPR}$ is the output of Algorithm~\ref{alg:t3knockoff}; both bounds depend on $\Sigma$, $\alpha$, and the order statistics $\beta_{(1)}^2,\ldots,\beta_{(d)}^2$. Then
    \[
    \frac{1}{d_1} B_T \;\geq\; B_A \;\geq\; \sup_{\cR} \;\min_{\beta \in \Omega} \;\TPR(\cR, \beta),
    \]
    where $\Omega\subseteq\RR^d$ is the set of vectors $\beta\in\RR^d$ whose squared entries have the same order statistics, and $\TPR(\cR, \beta)$ is the $\TPR$ of method $\cR$ under $\hbeta \sim \cN_d(\beta,\Sigma)$. 
\end{restatable}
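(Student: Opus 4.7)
The plan is to verify the two inequalities in turn. The left one, $B_T/d_1 \geq B_A$, follows essentially immediately from Theorem~\ref{thm:mainrandom}: Algorithm~\ref{alg:t3knockoff} is a Monte Carlo simulation of an idealized oracle knockoff* procedure whose diagonal covariance entries are set to $b_1(\Sigma), \ldots, b_d(\Sigma)$. Since the proof of Theorem~\ref{thm:mainrandom} uses $\Delta$ only through its order statistics and the inequality $\Delta_{(k,k)} \geq b_k(\Sigma)$, the bound $\EE R \leq B_T$ applies verbatim to the random rejection count inside each iteration of the algorithm; combined with the trivial bound $\widehat{\TPR} \leq R/d_1$, this gives $B_A = \EE\,\widehat{\TPR} \leq B_T/d_1$.

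For the right inequality $B_A \geq \sup_\cR \min_{\beta \in \Omega}\TPR(\cR,\beta)$, I fix any knockoff implementation $\cR$. Since the minimum over $\Omega$ is bounded by the expectation under $\beta \sim \text{Unif}(\Omega)$, it suffices to show $\EE_{\beta \sim \text{Unif}(\Omega)}\TPR(\cR,\beta) \leq B_A$. Theorem~\ref{thm:equivalence} represents $\cR$ as a whitening implementation with some diagonal $\Delta \succeq \Sigma$, and Proposition~\ref{prop:knockoffstaropt} shows that pointwise $\TPR(\cR,\beta) \leq \TPR(\text{knockoff*}_\Delta, \beta)$; the remaining task is to bound $\EE_\beta \TPR(\text{knockoff*}_\Delta, \beta)$ by $B_A$.

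After relabeling coordinates so $\Delta_{11} \leq \cdots \leq \Delta_{dd}$, drawing $\beta$ uniformly from $\Omega$ is equivalent to pairing $\Delta_{(1,1)}, \ldots, \Delta_{(d,d)}$ with $\beta_{(1)}^2, \ldots, \beta_{(d)}^2$ through a uniformly random permutation $\pi$. Coupling Algorithm~\ref{alg:t3knockoff} to the same $\pi$ and invoking $\Delta_{(k,k)} \geq b_k(\Sigma)$ from Proposition~\ref{prop:Deltabound} yields coordinatewise $\mu_j^{\text{real}} = 2\beta_{\pi(j)}^2/\sigma^2 \Delta_{jj} \leq 2\beta_{\pi(j)}^2/\sigma^2 b_j(\Sigma) = \mu_j^{\text{alg}}$, with identical null indicators $L_j = \mathbb{1}\{\beta_{\pi(j)} \neq 0\}$. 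Since $|\cN(\mu,2\mu)|$ has monotone likelihood ratio in $\mu \geq 0$, the coupling extends coordinatewise to the $\eta_j$'s, and sampling $\tp_j = 1/2$ iff $U_j \leq (1+e^{-\eta_j})^{-1}$ with shared uniforms $U_j$ further guarantees $\tp_j^{\text{real}} = 1/2 \Rightarrow \tp_j^{\text{alg}} = 1/2$ on all coordinates.

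The main obstacle is closing the argument by showing that the expected number of non-null rejections of the knockoff* procedure is monotone nondecreasing under coordinatewise increases of the $\eta_j$'s when the labels are held fixed. My plan is to reduce this to a single-coordinate swap: raising $\eta_j$ for one non-null $j$ both increases $\PP(\tp_j = 1/2)$ and promotes $j$ earlier in the sort order, and a direct comparison of the two Selective SeqStep random walks driven by the coupled $\tp$ sequences should show that every non-null rejection in the lower-$\eta$ scenario survives in the higher-$\eta$ scenario, with possibly new rejections added. Iterating across coordinates from $\eta^{\text{real}}$ up to $\eta^{\text{alg}}$ establishes pointwise domination of the non-null rejection set, and taking expectations over $\pi$ and the coupling yields $\EE_\beta \TPR(\text{knockoff*}_\Delta, \beta) \leq B_A$, completing the right inequality.
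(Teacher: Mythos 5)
Your proposal follows the same structure as the paper's proof and is correct in substance. The left inequality is argued exactly as the paper does: the algorithm simulates the knockoff* procedure with $\mu_j^* = 2\beta_{\Pi(j)}^{*2}/b_j(\Sigma)$, the proof of Theorem~\ref{thm:mainrandom} already bounds $\EE R$ for precisely that process by $B_T$, and $\TPP \leq R/d_1$. The right inequality likewise goes through $\min_\beta \leq \EE_{\beta\sim P}$ for $P$ the uniform-permutation law, followed by Theorem~\ref{thm:equivalence} and Proposition~\ref{prop:knockoffstaropt} to reduce to knockoff*$_\Delta$, and then the coordinatewise comparison $\mu_j^{\text{real}} \leq \mu_j^*$ via Proposition~\ref{prop:Deltabound}.

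Where you diverge is in flagging the monotonicity of the knockoff* TPR in $(\mu_1,\ldots,\mu_d)$ as the ``main obstacle'' and proposing an iterated single-coordinate swap argument to resolve it. The paper asserts this monotonicity (at the end of Section~\ref{sec:choosedelta}) and uses it without a detailed proof; you are right that a complete argument should justify it. But the justification is considerably simpler than the swap machinery you outline, and in particular does not require a second swap argument on top of Proposition~\ref{prop:knockoffstaropt}. Observe that knockoff*'s rejection and TPP distributions are functions of the sorted vector $(\eta_{(1)},\ldots,\eta_{(d)})$ alone: the $d_1$ non-nulls occupy positions $1,\ldots,d_1$ almost surely (positive $\eta$ versus $\eta=0$ for nulls), $\tp_{[k]}$ is Bernoulli with success probability $(1+e^{-\eta_{(k)}})^{-1}$, and both $\hk$ and $\sum_{k\leq \min(\hk,d_1)} 1\{\tp_{[k]}=1/2\}$ are monotone in the success vector under shared-uniform coupling. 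Coordinatewise $\eta_j \leq \eta_j'$ (which you get from $\mu_j \leq \mu_j^*$ and monotone coupling of $|\cN(\mu,2\mu)|$) automatically implies the sorted vectors are coordinatewise ordered, so the domination follows directly without needing to track how the \emph{unsorted} indices reshuffle across the two runs. Your instinct that the ordering change requires care is good, but the sorted-vector reduction short-circuits it; the swap lemma you anticipate needing is already fully subsumed by Proposition~\ref{prop:knockoffstaropt}, which only needs to be invoked once to replace $\cR$ with knockoff*.
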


We finish by applying Theorems~\ref{thm:main}--\ref{thm:mainrandom} to prove a simple corollary classifying asymptotic regimes where knockoffs underperform Bonferroni. Whereas other works commonly assume a specific limiting distribution for the covariance matrix $\Sigma$ \citep[e.g.][]{liu2019power, weinstein2017power}, we only impose a less stringent condition quantifying how ``dense'' the first eigenvector $u_1$ is. For $c \geq 0$, define the distribution function of the entries of $u_1$, scaled by $\sqrt{d}$:
\[
F_d(c) \;=\; \frac{1}{d}\sum_{j=1}^d 1\left\{|u_{1,j}| \,\leq\, \frac{c}{\sqrt{d}}\right\}.
\]
We have been analyzing the case where $F_d(c) = 0$ for some $c>0$. For the next results, we will assume only that $F_d(\cdot) \to F(\cdot)$. If $\lambda_1 \gg \log d$, the following corollary gives relatively weak conditions for when the best possible knockoff procedure must have $\TPR \to 0$ for signals above the Bonferroni threshold.

\begin{restatable}{cor}{randombeta}
\label{cor:randombeta}
Consider a sequence of problems with $d \to \infty$ for which
\begin{enumerate}[(i)]
    \item 
    $F_d(\cdot)$ has a distribution limit $F(\cdot)$ with no point mass at 0, i.e. $F(0) = 0$.
    \item 
    For some $r>1$, $\beta_j = \sigma\sqrt{2r\log d}$ for a subset of $d_1$ indices, and $\beta_j=0$ for the rest.
    \item 
    {\em Either} $d_1 \to \infty$ and $\supp(\beta)$ is uniformly random, {\em or} $\lim\inf d_1/d > 0$ and $\supp(\beta)$ is arbitrary.
\end{enumerate}
Then 
\begin{enumerate}
    \item The Bonferroni procedure at any level $\alpha$ has $\TPR \to 1$, and
    \item If $\lambda_1/\log d \to \infty$ then any knockoff method at any level $\alpha$ has $\TPR \to 0$.
\end{enumerate}
\end{restatable}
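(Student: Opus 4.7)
The plan is to dispatch the two claims separately. For the Bonferroni claim, I would rely on standard large-deviations asymptotics. Since $n\geq 2d$ forces $n-d\to\infty$, we have $\hsigma/\sigma\toProb 1$, and the critical value $t_{n-d,\,\alpha/(2d)}$ is asymptotically $z_{\alpha/(2d)}=\sqrt{2\log d}\,(1+o(1))$. Under $\Sigma_{jj}=1$, for a non-null coordinate $j$ the $t$-statistic behaves like $\sqrt{2r\log d}+\cN(0,1)+o_p(\sqrt{\log d})$, and because $r>1$ its magnitude exceeds the threshold with probability tending to one; thus $\TPP\toProb 1$ and $\TPR\to 1$ by bounded convergence.

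For the knockoff claim, I would apply Theorem~\ref{thm:main} in Case A and Theorem~\ref{thm:mainrandom} in Case B, showing in both that the critical index $k_d^\ast$ satisfies $k_d^\ast/d_1\to 0$; then $\EE R\leq C_1^\ast(\alpha)\,k_d^\ast+C_2^\ast(\alpha)$ immediately forces $\TPR\leq \EE R/d_1\to 0$. Since $\beta_{(k)}^2=2r\sigma^2\log d$ for $k\leq d_1$, conditions \eqref{eq:finitesamplecondition} and \eqref{eq:finitesampleconditionrandom} reduce to a single inequality of the form $b_{\tilde k}(\Sigma)>-8r\log d/\log\alpha$, with $\tilde k=k$ in Theorem~\ref{thm:main} and $\tilde k=\lfloor k/\pi_1\rfloor$ in Theorem~\ref{thm:mainrandom}. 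So it suffices to prove that $b_{\lceil \epsilon d\rceil}(\Sigma)/\log d\to\infty$ for every fixed $\epsilon>0$.

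The core step is this $b$-bound, obtained from the $\ell=1$ case of Proposition~\ref{prop:Deltabound}: $b_k(\Sigma)\geq \lambda_1 \sum_{j=1}^k u_{1,(j)}^2$. Using $F(0)=0$, for any $\epsilon>0$ I pick a continuity point $c_0>0$ of $F$ with $F(c_0)<\epsilon/4$; weak convergence $F_d(c_0)\to F(c_0)$ then gives $F_d(c_0)<\epsilon/2$ for all large $d$, meaning at most $\epsilon d/2$ indices satisfy $u_{1,j}^2\leq c_0^2/d$. Hence at least $\epsilon d/2$ of the terms in $\sum_{j=1}^{\lceil\epsilon d\rceil}u_{1,(j)}^2$ exceed $c_0^2/d$, yielding the lower bound $c_0^2\epsilon/2$. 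Multiplying by $\lambda_1$ and invoking $\lambda_1/\log d\to\infty$ gives $b_{\lceil \epsilon d\rceil}(\Sigma)\gg \log d$.

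Plugging back in: Case A uses Theorem~\ref{thm:main} at $k=\lceil\epsilon d\rceil$ to get $\EE R\leq C_1^\ast\lceil\epsilon d\rceil+C_2^\ast$, so $\TPR\leq C_1^\ast\epsilon/\pi_1^\ast+o(1)$ with $\pi_1^\ast=\liminf d_1/d>0$, which vanishes as $\epsilon\downarrow 0$. Case B uses Theorem~\ref{thm:mainrandom} at $k=\lceil\epsilon d_1\rceil$ so that $\lfloor k/\pi_1\rfloor\approx \epsilon d$, giving $\EE R=O(\epsilon d_1+1)$ and, since $d_1\to\infty$, again $\TPR\to 0$. The main delicacy is the weak-convergence step: $F(0)=0$ does not by itself pin down $F_d$ at any specific positive argument, so the argument depends on selecting $c_0$ at a continuity point of $F$ in order to convert the weak limit into the uniform-in-$d$ bound $F_d(c_0)<\epsilon/2$ that powers the rest of the proof.
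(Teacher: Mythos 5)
Your proof is correct and follows essentially the same route as the paper's: Bonferroni via Gaussian/$t$-tail asymptotics, then a lower bound $b_{\lceil\epsilon d\rceil}(\Sigma)\geq \lambda_1 c_0^2\epsilon/2$ derived from $F(0)=0$ and weak convergence, fed into Theorem~\ref{thm:main} for the $\liminf d_1/d>0$ case and Theorem~\ref{thm:mainrandom} for the random-support case. Your explicit choice of $c_0$ as a \emph{continuity point} of $F$ is a small but genuine tightening over the paper's version, which writes ``for large enough $d$ we have $F_d(c)\leq a/2$'' without noting that weak convergence only guarantees $F_d(c)\to F(c)$ at continuity points.
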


Whereas negative results in prior works have focused on specific implementations of knockoffs under restrictive assumptions about the limiting distribution of the covariance matrix, typically under a polynomial sparsity regime ($d_1 \sim d^{1-\gamma}$ for $\gamma \in [0,1)$), Corollary~\ref{cor:randombeta} holds uniformly over all possible design choices made by the analyst, makes only mild limiting assumptions about the leading eigenvalue and eigenvector of $\Sigma$, and holds whenever $d_1 \to \infty$.

\section{Knockoffs for multivariate normal estimators}\label{sec:knockoffswithoutknockoffs}

\subsection{Implementing the whitening method}\label{sec:implementwhitening}

As we observed in Section~\ref{sec:knockoffswhitened}, the whitening method can be defined without reference to any design matrix $X$. If $\htheta$ is any estimator of a $d$-variate parameter $\theta$ with distribution $\htheta \sim \cN_d(\theta, \Sigma)$ and $\Sigma$ is known, or known up to a scalar multiplier, the whitening method can be applied directly. Theorem~\ref{thm:asymptoticnormal} in Section~\ref{sec:asymptoticnormal} shows that if $\htheta$ is only asymptotically Gaussian with a consistent estimator $\hSigma \toProb \Sigma$, then the whitening method can also be applied directly using $\hSigma$ as a plug-in estimator, and the FDR will be controlled asymptotically (though we note that the result is limited to classical fixed-dimensional asymptotics).

By shifting our focus to the estimator $\hbeta$, the whitening interpretation suggests easier solutions to closely related regression problems that do not easily lend themselves to introducing knockoff predictor variables. For example, if $\beta$ represents the population-risk-minimizing coefficients in a logistic regression, quantile regression, or robust regression with Huber loss, we can rely on well-established asymptotic normality results instead of trying to invent a novel framework for generating knockoff predictors in each new example.

Because of the one-to-one correspondence between methods in each formulation, we can even deploy state-of-the-art knockoff implementations, such as those in the \texttt{knockoff} package~\citep{patterson2017knockoff}, by constructing a pseudo-design matrix and pseudo-response using $\tbeta$ and $\xi$. 
\[
X^* = \binom{A^{1/2}}{\Delta^{-1/2}}, \quad \tX^* = \binom{A^{1/2}}{-\Delta^{-1/2}}\quad \text{ and }\;\; y^* = \binom{A^{-1/2}\xi}{\Delta^{-1/2}\tbeta}.
\]
Recalling that $A = \Sigma^{-1} - \Delta^{-1}$, and that $2\Delta^{-1}$ is the counterpart of $D$, a short calculation verifies that
\[
X^{*\tran} X^* \;=\; \tX^{*\tran} \tX^* = \Sigma^{-1}, \quad X^{*\tran} \tX^* = \Sigma^{-1} - 2\Delta^{-1}, \quad \text{ and } \;\;y^* \;\sim\; \cN\left(X^*\beta, \; \sigma^2 I_{2d}\right),
\]
so the distribution of the rejection set using $[X^*\, \tX^*]$ and $y^*$ as inputs to any off-the-shelf knockoffs package is the same as it would be if the problem had been posed as a linear regression to begin with. 

The whitening interpretation also suggests simpler methods like estimating $\beta$ and $\sigma^2$, and therefore also $\eta$, by regularized likelihood or Bayesian methods using the known Gaussian likelihood for $\xi$ and $|\tbeta|$. Because $\sgn(\tbeta_1),\ldots,\sgn(\tbeta_d)$ are missing, the log-likelihood is not necessarily concave, but the expectation-maximization (EM) algorithm can be used to impute the missing values. We defer these ideas to future work.

While the coupling in Section~\ref{sec:equivalence} gives a recipe for generating $\omega$ as a function of $\XAug$ and $y$, we can alternatively generate the noise directly using the residual variance $\hsigma^2$. We can even relax the usual dimension requirement that $n \geq 2d$, and require only that $n \geq d + r$ where $r = \text{rank}(\Delta-\Sigma) \leq d$. In that case let $M \in \RR^{d \times r}$ be any fixed matrix with $M M^\tran = \Delta - \Sigma$, and set
\begin{equation}\label{eq:genomega}
\omega = \sqrt{\frac{(n-d)v\hsigma^2}{\|\omega'\|^2}} \,\cdot\, M \omega', \quad \text{ for } \;\;\omega' \sim \cN_r(0, I_r) \;\;\text{ and }\;\; v \sim \text{Beta}\left(\frac{r}{2},\,\frac{n-d-r}{2}\right),
\end{equation}
or $v=1$ if $n = d+r$. Here $\omega'$ and $v$ are auxiliary random variables generated independently of the data and each other. Then $\omega$ is independent of $\hbeta$ because $\hsigma^2$ is, and Proposition~\ref{prop:noise} shows it has the desired distribution.

\begin{proposition}\label{prop:noise}
Define $\omega', v,$ and $\omega$ as in \eqref{eq:genomega}. Then $(n-d) v \hsigma^2 \sim \sigma^2\chi_{r}^2$, and $\omega \sim \cN_r(0, \sigma^2 (\Delta - \Sigma))$.
\end{proposition}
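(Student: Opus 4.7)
The plan is to verify the two claims using standard distributional identities, exploiting the mutual independence of the auxiliary variables $\omega'$ and $v$ from the data.

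For the first claim, I would start from the fact that $(n-d)\hsigma^2/\sigma^2 \sim \chi^2_{n-d}$ since $\hsigma^2$ is the usual residual variance in the Gaussian linear model. Because $v$ is generated independently of the data, $v$ is independent of $\hsigma^2$. Now invoke the classical identity that if $U \sim \chi^2_{n-d}$ and $V \sim \text{Beta}(r/2,(n-d-r)/2)$ are independent, then $UV \sim \chi^2_r$ (this is the standard chi-square--Beta decomposition coming from writing $\chi^2_{n-d} = \chi^2_r + \chi^2_{n-d-r}$ and noting that $V$ is the ratio of the first summand to the total). Applying this with $U = (n-d)\hsigma^2/\sigma^2$ and $V = v$ gives $(n-d)v\hsigma^2/\sigma^2 \sim \chi^2_r$, which is the first claim. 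The degenerate case $n=d+r$ is handled separately: there $v=1$ and $(n-d)\hsigma^2 = r\hsigma^2 \sim \sigma^2 \chi^2_r$ directly.

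For the second claim, I would factor $\omega = MW$ where
\[
W \;=\; \sqrt{(n-d)v\hsigma^2}\,\cdot\,\frac{\omega'}{\|\omega'\|},
\]
and show that $W \sim \cN_r(0,\sigma^2 I_r)$ using the polar representation of a spherical Gaussian. Specifically, since $\omega' \sim \cN_r(0,I_r)$, it is standard that $\omega'/\|\omega'\|$ is uniform on $S^{r-1}$ and independent of $\|\omega'\|$. Moreover $\omega'$ is generated independently of the data and of $v$, so the direction $\omega'/\|\omega'\|$ is independent of the scalar factor $R := \sqrt{(n-d)v\hsigma^2/\sigma^2}$. By the first claim, $R^2 \sim \chi^2_r$. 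Therefore $W = \sigma R\cdot (\omega'/\|\omega'\|)$ is $\sigma$ times the product of an independent $\chi_r$-distributed radius and a uniform direction on $S^{r-1}$, which is precisely the polar decomposition of a $\cN_r(0,\sigma^2 I_r)$ vector.

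Finally, since $\omega = MW$ is a deterministic linear transformation of a Gaussian, $\omega \sim \cN_d(0,\sigma^2 MM^\tran) = \cN_d(0,\sigma^2(\Delta-\Sigma))$ by the defining property $MM^\tran = \Delta-\Sigma$. The only item that requires any care is bookkeeping the independence structure: one must confirm that the three inputs $\hsigma^2$, $v$, and $\omega'$ are mutually independent (true by construction), so that the direction $\omega'/\|\omega'\|$ is independent of the radial factor $R$ even though $R$ depends on both $v$ and $\hsigma^2$. Once this is noted, everything else is a routine invocation of the polar representation and the chi-square--Beta identity; there is no substantive obstacle.
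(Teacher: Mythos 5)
Your proof is correct and follows essentially the same route as the paper's: both establish $(n-d)v\hsigma^2 \sim \sigma^2\chi^2_r$ via the independence of a Beta ratio from the chi-square total, and both conclude by recognizing that an independent $\chi_r$ radius times a uniform direction on $\mathbb{S}^{r-1}$ reproduces the polar decomposition of $\cN_r(0,\sigma^2 I_r)$, after which $\omega = MW$ inherits covariance $\sigma^2 MM^\tran = \sigma^2(\Delta-\Sigma)$. The only cosmetic difference is that you invoke the chi-square--Beta identity as a known fact while the paper briefly re-derives it from the Gamma representation.
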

\begin{proof}
If $n = d + r$ and $v=1$ then $(n-d)v\hsigma^2 \sim \sigma^2\chi_{n-d}^2$ is immediate. If $n > d+r$, independently generate $a_1 \sim \chi_r^2 = \text{Gamma}\left(\frac{r}{2}, \,2\right)$ and $a_2 \sim \chi_{n-d-r}^2 = \text{Gamma}\left(\frac{n-d-r}{2}, \,2\right)$. Then it is a standard fact that the ratio $v = \frac{a_1}{a_1 + a_2} \sim \text{Beta}\left(\frac{r}{2}, \,\frac{n-d-r}{2}\right)$ is independent of $a_1 + a_2 \sim \chi_{n-d}^2$. As a result,
\[
v \,\cdot\, (n-d)\hsigma^2 \;\stackrel{\mathcal{D}}{=}\; v \,\cdot\, (a_1 + a_2)\sigma^2 \,=\, \sigma^2 a_1 \,\sim\, \sigma^2\chi_r^2.
\]
Similarly, $\omega'/\|\omega'\| \sim \text{Unif}(\mathbb{S}^{r-1})$ is independent of $\|\omega'\|^2 \sim \chi_r^2$. As a result,
\[
\omega^* = \sqrt{(n-d)v\hsigma^2} \cdot \frac{\omega'}{\|\omega'\|} \;\stackrel{\mathcal{D}}{=}\; \sqrt{\sigma^2\|\omega'\|^2} \cdot \frac{\omega'}{\|\omega'\|} \;\sim\; \cN_r(0, \sigma^2 I_r),
\]
and setting $\omega = M\omega^*$ gives the desired result.
\end{proof}

If we want the software package to generate the knockoff matrix for us, we can also do this as long as $r=d$. Then we can generate
\begin{equation}\label{eq:packageknockoffs}
X^* = \binom{\Sigma^{-1/2}}{0_{d\times d}}, \quad \text{ and }\;\; y^* = \binom{\Sigma^{-1/2}\hbeta}{\omega^*},
\end{equation}
where $\omega^*$ is defined as in Proposition~\ref{prop:noise}; then $X^{*\tran} X^* = \Sigma^{-1}$ and $y^* \sim \cN_{2d}(X^* \beta, \sigma^2 I_{2d})$ as desired.

If $n>d+r$, then part of $\hsigma^2$ is not used up in generating $\omega$:
\[
\tsigma^2 \;=\; \frac{(n-d)(1-v)}{n-d-r}\hsigma^2 \;\sim\; \frac{\sigma^2}{n-d-r}\,\chi_{n-d-r}^2.
\]
It is easily shown that $\hbeta, \omega,$ and $\tsigma^2$ are mutually independent, so allowing the analyst to use $\tsigma^2$ in Stage 2 has no effect on the conditional inference in Stage 3. 

The analogous quantity in the standard implementation is the residual variance from the augmented regression of $y$ on $\XAug$. Because $\tsigma^2$ is independent of $\XAug^\tran y$, the unordered pair property could be immediately relaxed to a requirement that $W^*$ be a function of the unordered pairs and $\tsigma^2$ without affecting the FDR control proof. We note \citet{chen2020prototype} also suggested the possibility of using this quantity. One use for $\tsigma^2$ could be as an input to generalized cross-validation~\citep{golub1979generalized}, and another could be as an aid in estimating $\beta$ and $\sigma^2$, as suggested above.

\subsection{Knockoffs for an asymptotically normal estimator}\label{sec:asymptoticnormal}

In this section we give simple sufficient conditions under which the whitening method can be applied to {\em any} asymptotically normal estimator $\htheta_n$ of a $d$-variate parameter $\theta\in \RR^d$, along with a consistent covariance estimator $\hSigma_n$. We consider a classical local asymptotic regime in which $\theta = \beta/\sqrt{n}$, for a local parameter $\beta\in \RR^d$ and a quantity $n$ that we can think of as a sample size, and assume\footnote{There is nothing special about $\htheta$ being $\sqrt{n}$-consistent; for example if we had $n^{1/4}(\htheta-\theta) \Rightarrow \cN(0,\Sigma)$ we could just as well define the local parameter $\beta = n^{-1/4}\theta$ and repeat the argument in this section.}
\[
\sqrt{n} \left(\htheta_n - \theta\right) \;\Rightarrow\; \cN_d(0, \Sigma), \quad \hSigma_n \toProb \Sigma.
\]

If we define $\hbeta_n = \sqrt{n}\,\htheta_n$, we have $\hbeta_n \Rightarrow \cN_d(\beta, \Sigma)$. It will be more convenient to work in terms of $\hbeta_n$ rather than $\htheta_n$; note that the null and directional alternative hypotheses are the same if we define them in terms of $\beta$ or $\theta$. For most implementations of knockoffs, there is a preprocessing step effectively standardizing either $\Sigma$ or $\Sigma^{-1}$ to have unit diagonal entries, in which case applying the method to $\hbeta_n$ or $\htheta_n$ would yield identical results.

We assume the analyst carries out knockoffs on $\hbeta_n$ according the usual whitening method, but plugging in $\hSigma_n$ for $\Sigma$. Let $\hDelta_n = \Delta(\hSigma_n) \succeq \hSigma$ denote the analyst's choice of diagonal matrix based on $\hSigma_n$. To generate the whitening noise, the analyst sets $\omega_n = \hM_n \omega'$, where $\hM_n = (\hDelta_n - \hSigma_n)^{1/2}$ and $\omega' \sim \cN_d(0, I_d)$ independently of the data.

Once the noise has been generated, the method proceeds exactly as described in Section~\ref{sec:knockoffswhitened}. That is, for
\[
\tbeta_n = \hbeta_n + \omega_n, \quad \xi_n = \hSigma_n^{-1} \hbeta_n - \hDelta_n^{-1} \tbeta_n,
\]
the analyst chooses alternative directions and an ordering based on $\xi_n, |\tbeta_n|$ and then carries out Selective SeqStep on the binary $p$-values.

To impose some regularity on the exploratory analysis, we consider knockoff methods for which the exploratory stage can be defined in terms of $d$ non-negative random variables $W_j^+(\xi, \tbeta)$, along with
\begin{equation}\label{eq:Wjplus}
W_j^-(\xi, \tbeta) \;=\; W_j^+(\xi, \text{flip}_j(\tbeta)), \quad \text{ and } \;\; W_j \;=\; \max\left\{W_j^+, W_j^-\right\} \cdot \sgn(W_j^+-W_j^-),
\end{equation}
where $\text{flip}_j:\;\RR^d \to \RR^d$ flips the sign of its $j$th argument and leaves the others fixed. The $W$-statistics given in \eqref{eq:Wjplus} are then used the same way the $W$-statistics are used in the standard formulation of knockoffs: the variables are ordered by $|W_j|$ with $\psi_j = \sgn(W_j) \cdot \sgn(\tbeta_j)$, so $\tp_j = 1/2 \Leftrightarrow W_j > 0$. It is easy to see that 
$W_j^* \;=\; W_j \cdot \sgn(\tbeta_j)$ is a function of $\xi$ and $|\tbeta|$ alone, so any statistic $W^+$ defines a valid implementation of the whitening method. This way of defining knockoffs is inspired by the ``$\lambda$ signed max'' statistic, which we can recover by setting
\[
W_j^+ = \max\{\lambda > 0:\; \hbeta_j^\lambda  \neq 0\},
\]
where $\hbeta^\lambda$ is some regularized estimator of $\beta$ with penalty parameter $\lambda>0$, possibly in an augmented linear regression model for the response $y^*$ against the design matrix $[X^*\,\tX^*]$ as defined above.

Let $\cR_\alpha(\hbeta, \,\Sigma; \,\omega)$ denote the $\alpha$-level rejection set for the knockoff method defined by $\Delta(\cdot)$ and $W^+(\cdot)$, as it would be applied to $\hbeta \sim \cN_d(\beta, \Sigma)$ with whitening noise $\omega$; we will typically suppress the randomization variable $\omega$. We say $\cR$ is a {\em continuous knockoffs procedure} if $\Delta(\cdot)$ and $W^+(\cdot)$ are continuous functions of their arguments. In the fixed asymptotic regime under mild conditions, the rejection set converges to its distribution under exact Gaussian sampling, as we show next.

\begin{restatable}{theorem}{asymptotic}
\label{thm:asymptoticnormal}
    Assume that we carry out a continuous knockoffs procedure $\cR(\cdot)$ on the asymptotically normal estimator
    \[
    \hbeta_n \;\Rightarrow\; \cN_d(\beta, \Sigma), \quad \text{ and } \;\hSigma_n \;\toProb\; \Sigma,
    \]
    where $\Sigma$ is nonsingular. Let
    \[
    E = \{(\xi, \tbeta):\; W_1^+, W_1^-, \ldots, W_d^+, W_d^- \textnormal{ are all positive and distinct}\},
    \]
    and assume that $E^\setcomp$ has Lebesgue measure zero. 
    Then we have for any $\alpha$,
    \[
    \cR_\alpha\left(\hbeta_n, \,\hSigma_n; \,\omega_n\right) \;\Rightarrow\; \cR_\alpha\left(\hbeta, \,\Sigma; \,\omega\right),
    \]
    where $\hbeta \sim \cN_d(\beta,\Sigma)$ and $\omega \sim \cN_d(0,\Delta(\Sigma)-\Sigma)$ independently. In particular, the FDR and TPR of $\cR_\alpha$ converge to their values under exact normal sampling.
\end{restatable}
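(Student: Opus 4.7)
The plan is to cast Theorem~\ref{thm:asymptoticnormal} as an application of the continuous mapping theorem. I would view the entire knockoffs procedure as a single deterministic map $F(\beta^*, S^*, \omega'^*)$ that takes as input an estimator value $\beta^*$, a covariance estimate $S^*$, and a standard normal seed $\omega'^*$, and returns a rejection set. Concretely, $F$ computes $\hDelta = \Delta(S^*)$, $\hM = (\hDelta - S^*)^{1/2}$, $\tbeta = \beta^* + \hM\omega'^*$, $\xi = (S^*)^{-1}\beta^* - \hDelta^{-1}\tbeta$, then forms the statistics $W_j^\pm(\xi,\tbeta)$ from \eqref{eq:Wjplus}, and finally runs Selective SeqStep. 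Joint weak convergence of the inputs follows from Slutsky: $\omega'$ is independent of the data, $\hSigma_n \toProb \Sigma$ is a constant, and $\hbeta_n \Rightarrow \cN_d(\beta, \Sigma)$, so
$$ (\hbeta_n, \hSigma_n, \omega') \;\Rightarrow\; (\hbeta, \Sigma, \omega'), $$
with $\hbeta \sim \cN_d(\beta, \Sigma)$ independent of $\omega' \sim \cN_d(0, I_d)$.

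Next I would verify continuity of $F$ at almost every point of the limit. The ingredients $\Delta(\cdot)$, $W^+(\cdot)$, matrix inversion at a nonsingular argument, and the matrix square root on the PSD cone are all continuous, so $(\beta^*, S^*, \omega'^*) \mapsto (\xi, \tbeta)$ is continuous at any $(\cdot, \Sigma, \cdot)$ with $\Sigma$ nonsingular, and $(\xi,\tbeta) \mapsto (W_1,\ldots,W_d)$ is continuous throughout. The only step that can fail to be continuous is the discrete Selective SeqStep post-processing. On the event $E$, however, the $2d$ values $W_j^\pm$ are positive and distinct, which forces the $|W_j|$ to be distinct as well, pins down every sign $\sgn(W_j)$, and hence pins down the ordering, the binary $p$-values $\tp_j$, and the threshold $\hk$. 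Because $E$ is open (defined by strict inequalities on continuous functions) and each of these objects is locally constant on $E$, the rejection-set map is locally constant---in particular continuous---at every point of $E$.

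Finally I apply CMT and transfer convergence to the operating characteristics. The limit law of $(\xi,\tbeta)$ is jointly Gaussian with block-diagonal covariance $\diag(A, \Delta(\Sigma))$ where $A = \Sigma^{-1} - \Delta(\Sigma)^{-1}$, by the independence calculation already in Section~\ref{sec:knockoffswhitened}; this law is absolutely continuous on its affine support. Combined with the hypothesis that $E^\setcomp$ is Lebesgue-null, this gives $\PP((\xi, \tbeta) \in E) = 1$, so the continuity set of $F$ has probability one under the limit and the continuous mapping theorem yields
$$ \cR_\alpha(\hbeta_n, \hSigma_n; \omega_n) \;\Rightarrow\; \cR_\alpha(\hbeta, \Sigma; \omega), \quad \omega = (\Delta(\Sigma)-\Sigma)^{1/2}\omega'. $$
Because the rejection set takes only finitely many values, weak convergence is equivalent to pointwise convergence of its mass function; since $\FDP$ and $\TPP$ are deterministic functions of the rejection set (given $\beta$) bounded by $1$, bounded convergence delivers $\FDR$ and $\TPR$ convergence. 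The one delicate spot is the Lebesgue-null hypothesis when $\Delta(\Sigma) - \Sigma$ is merely PSD: then $(\xi,\tbeta)$ lives on a proper affine subspace of $\RR^{2d}$ and a null set in $\RR^{2d}$ need not pull back to zero probability. In typical continuous implementations $\Delta(\Sigma)\succ\Sigma$ strictly and the full-density argument is immediate; otherwise one should read the hypothesis relative to the natural base measure on the support, and the rest of the argument is unchanged.
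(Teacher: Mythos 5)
Your proposal is correct and is essentially the same argument the paper makes. Both you and the paper establish joint weak convergence $(\xi_n,\tbeta_n)\Rightarrow(\xi,\tbeta)$ by expressing them as a continuous (converging) linear map of $(\hbeta_n,\omega')$, and both hinge on the rejection set being locally constant on the open set $E$ together with $\PP\bigl((\xi,\tbeta)\in E\bigr)=1$. The only structural difference is cosmetic: the paper partitions $E$ into finitely many events $E_\pi$ indexed by the permutation that sorts $(W^+,W^-)$, defines a continuous ``margin'' function $\delta_\pi$ with $E_\pi=\{\delta_\pi>0\}$, and concludes by observing that $0$ is a continuity point of the limit cdf of $\delta_\pi(\xi,\tbeta)$; you instead invoke the continuous mapping theorem directly, using the fact that the full rejection-set map is continuous at every point of $E$. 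These are equivalent ways to package the same idea. Your remark about the Lebesgue-null hypothesis being delicate when $\Delta(\Sigma)-\Sigma$ is singular (so that $(\xi,\tbeta)$ is supported on a proper affine subspace of $\RR^{2d}$) is a legitimate observation: the paper's proof implicitly relies on the limit having a density on $\RR^{2d}$ when it concludes that $\delta_\pi^{-1}(0)\subseteq E^\setcomp$ being Lebesgue-null makes $0$ a continuity point, so the same caveat applies to both arguments.
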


As an immediate application of Theorem~\ref{thm:asymptoticnormal}, we see that continuous knockoff methods can be applied to any asymptotically normal maximum likelihood estimator.

\begin{example}[Maximum likelihood estimation]\label{ex:mle}
    Suppose we observe a sample $X_1,\ldots,X_n$ from a parametric family $\cP = \{p_\theta:\; \theta\in\Theta\subseteq \RR^d\}$, where $p_\theta$ is a density with respect to a common dominating measure $\mu$. Define the maximum likelihood estimator (MLE)
    \[
    \htheta_n \;=\; \argmax_{\theta\in \Theta} \sum_{i=1}^n \log p_\theta(X_i)
    \]
    Assume further that $\cP$ is sufficiently regular so that for $\theta$ in the interior of $\Theta$,
    \[
    \sqrt{n}\left(\htheta_n - \theta\right) \;\Rightarrow\; \cN_d\left(0, J(\theta)^{-1}\right),
    \]
    where $J(\theta)^{-1}$ is the Fisher information at $\theta$. If $J(\theta)$ is nonsingular and continuous at $\theta$, and $\cR(\cdot)$ is continuous, then by Theorem~\ref{thm:asymptoticnormal}, $\cR_\alpha(\htheta_n, \,J(\htheta_n)^{-1})$ asymptotically controls the FDR at level $\alpha$.
\end{example}

One interesting special case of Example~\ref{ex:mle} is the logistic regression model where we observe predictors $x_i\in\RR^d$ and binary response $y_i$, and model $\logit \PP(y_i = 1 \mid x_i) = x_i^\tran \beta$ for some $\beta \in \RR^d$. Then, just as in linear regression, we have a design matrix $X \in \RR^{n\times d}$ and response $y \in \{0,1\}^n$. If we think of knockoffs in terms of the standard formulation, we might be tempted to try generalizing it by adding some matrix $\tX$ of predictor variables and fitting a logistic regression on the augmented matrix $\XAug$. But it is unclear how to design such a matrix {\em a priori}, since the MLE $\hbeta_n$ solves the weighted least squares problem
\[
\hbeta_n \;=\; \left(X^\tran \hV_n X\right)^{-1} X^\tran \hV_n y, \quad \text{ where } \; \hV_n \;=\; \diag\left(\Var_{\hbeta_n}(y_i)\right).
\]
In particular, it is the predictor variables' {\em weighted} correlations with each other that determine the asymptotic distribution of $\hbeta_n$, not the unweighted correlations as in linear regression. But to design a knockoff predictor $\tX_j$ whose weighted correlations with $X_{-j}$ are the same as those of the real predictor $X_j$, we would need to know $\hbeta$, which determines the weights, ahead of time.

By contrast, if we view the MLE $\hbeta$ for linear regression as an asymptotically Gaussian random vector, we can simply treat it just as we would the OLS estimator for a linear regression, and treat the variance estimator $\hSigma_n = (X^\tran V_{\hbeta_n} X)^{-1}$ as a consistent plug-in estimator for the asymptotic variance.

We caution, however, that asymptotic normal approximations can be unreliable in high-dimensional settings, particularly when $d/n \to 0$. See e.g. \citet{el2013robust} and \citet{donoho2016high} regarding the limitations of asymptotic normal approximation in high-dimensional M-estimation; \citet{el2018can} also cast doubt on the accuracy of the bootstrap in high dimensions. In particular \citet{sur2019modern} showed that logistic regression is unreliable in high dimensions even with five to ten observations per predictor variable. As a result, we recommend applying this method only in settings where $n \gg d$ so the multivariate Gaussian approximation is reliable; determining the full range of application beyond the classical setting is outside the scope of this work.

\subsection{Example: Analysis of stock market data}\label{sec:snp}

We now use an example from stock market data to illustrate the implementation of fixed-X knockoff when the test statistics are asymptotically multivariate Gaussian. Our goal is to test whether the daily returns of a given stock are predictable based on the one-day-lagged returns of the overall market, as measured by the Standard and Poors (S\&P) 500 index. We use S\&P 500 data from February 2013 to February 2018\footnote{Source: https://www.kaggle.com/camnugent/sandp500}, totalling 1259 trading days, and restrict our analysis to the $470$ stocks that belonged to the S\&P 500 over the entire period considered. For each stock, we test whether its excess returns relative to the market are correlated with the one-day-lagged market returns.

For $j=1,\ldots,d=470$, let $y_j(t)$ denote the price of stock $j$ on day $t$, and let $y_0(t)$ denote the level of the S\&P 500 index on day $t$. Then for $j=0,1,\ldots,470$, the return for stock $j$ on day $t$ is
\[
r_j(t) \;=\; \frac{y_j(t)}{y_{j}(t-1)} \,-\, 1.
\]
Our parameter of interest for stock $j$ is the Spearman (rank) correlation of its excess return, relative to the market, with the one-day-lagged market return:
\[
\rho_j \;=\; \text{Corr}_S(e_j(t), \, r_0(t-1)), \quad \text{ where } \;\; e_j(t) = r_j(t) - r_0(t),
\]
and  $\text{Corr}_S$ denotes the Spearman correlation. Roughly speaking, if $\rho_j > 0$ then stock $j$ is a ``good bet'' today if the market performed well yesterday and a ``bad bet'' otherwise, and if $\rho_j < 0$ the reverse is true. Because economic theory suggests stock returns should be unpredictable, we expect most of the true correlations to be close to 0. We will test $H_j:\; \rho_j = 0$ against the two-sided alternative for $j = 1,\ldots,d$.

We define the problem in terms of  excess returns $e_j(t)$ instead of ``raw'' returns $r_j(t)$ for two reasons: First, predictability of excess returns is more easily translated into a trading strategy, since an investor could hedge their bets against the market, giving another reason for most $\rho_j$ to be small. Second, the pairwise correlation between stock returns for any two stocks is usually positive, since good or bad news about the economy at large tends to move all stocks in the same direction. As a result, the leading eigenvalue and eigenvector of $\text{Var}(r(t))$ mimics the leading eigenvalue and eigenvector for MCC, giving knockoffs no chance to perform well. The common market factor is so strong, accounting for almost half the total variance, that it is doubtful whether FDR is even the right error rate to control \citep{efron2007correlation,schwartzman2011effect,kluger2021central}. By contrast, the excess returns do not exhibit the same sort of positive correlation because we have removed it by subtracting off the market index return.

To test the hypotheses, we calculate the sample Spearman correlations $\hrho_1,\ldots,\hrho_d$, and use the bootstrap to obtain a nonparametric estimator $\hSigma$ for the covariance matrix $\Sigma = \Var(\hrho)$; which has similar structure to $\Var(e(t))$. Because the autocorrelations of $r_j(t)$ are small, we use the i.i.d. bootstrap, but we obtain similar rejection sets if we use the block bootstrap for block length 15 or 50 (blocks of three or ten weeks respectively). The bootstrap resampling is repeated 10,000 times. By inspecting the normal Q-Q plot of the sample Spearman correlation across bootstrapped samples, we find that the asymptotic Gaussian approximation fits the bootstrap distribution reasonably well even in the tails where rejections occur, at least marginally for each stock. We use the formulation in the previous section to implement the fixed-$X$ knockoff treating $\hrho$ as an asymptotically Gaussian estimator of $\rho$. We use the lasso $\lambda$ signed-max method with SDP knockoffs. 

The leading eigenvalue of the bootstrap correlation matrix $\widehat{\text{Corr}}(\hrho)$ is 53, accounting for about 11\% of the total variance, and the entries of the leading eigenvector are dense but roughly symmetrically distributed around zero, ranging from $-0.15$ to $0.15$. Thus, the eigenstructure does not rule out a well-designed implementation of knockoffs performing well but it may nonetheless be worrisome.

Given the estimated correlation matrix, we use the procedure in Section \ref{sec:mvnknockoff} to implement the fixed-X knockoff. Since this requires generating additional Gaussian noise, we repeat the procedure 30 times to obtain more stable rejection sets, and report hypotheses that are rejected for at least 50\% of times. This stability selection criteria is similar to that proposed in \citep{ren2020derandomizing}. We found that at FDR level $\alpha = .2$, knockoff obtains no rejections for 26 out of 30 trials. The other 4 times, there are 7 rejections on average, and the stocks BA, BRK.B, MDLZ and UAA are always rejected. As such, no stocks meet the stability selection threshold. By contrast, both the BH and the Bonferroni correction makes a number of rejections, with the BH making 11. The stocks rejected by the BH and the Bonferroni correction are summarized in Table \ref{table:snp500}.

Figure \ref{fig:wstatistics} shows the $W$-statistics and along with $\hFDP_k$ from a representative trial. We find that there are many negative $W$ statistics in the front of the list, indicating that knockoff will not have many rejections even if the significance level $\alpha$ is reasonably relaxed. We see in Section~\ref{sec:poweranalysis} that we could have anticipated this underperformance by inspecting the $\Delta$ values.

\begin{table}[t]
\centering
\caption{Rejections of different methods at different target FDR levels, for testing the null hypotheses that the returns of each stock are correlated with the lag-1 return of the S\&P 500 index. ``Additional rejections" are rejections that are made at $\alpha=0.2$ but not $\alpha=0.1$.}
\vspace{2mm}
\begin{tabular}{@{}ccc@{}}\toprule
Method & Rejections at $\alpha=.1$ (stock tickers) & Additional rejections at $\alpha=.2$ \\\midrule
BH & 7 (AVGO, BA, BRK.B, JEC, MOS, SWKS, UAA) & 4 (FISV, KORS, V, WAT)\\[5pt]
Bonferroni & 3 (BRK.B, JEC, MOS) & 0\\[5pt]
Fixed-$X$ knockoff & 0 & 0 \\
\bottomrule
\end{tabular}
\label{table:snp500}
\end{table}

\begin{figure}
    \centering
    \begin{tabular}{cc}
    \includegraphics[width=0.85\textwidth]{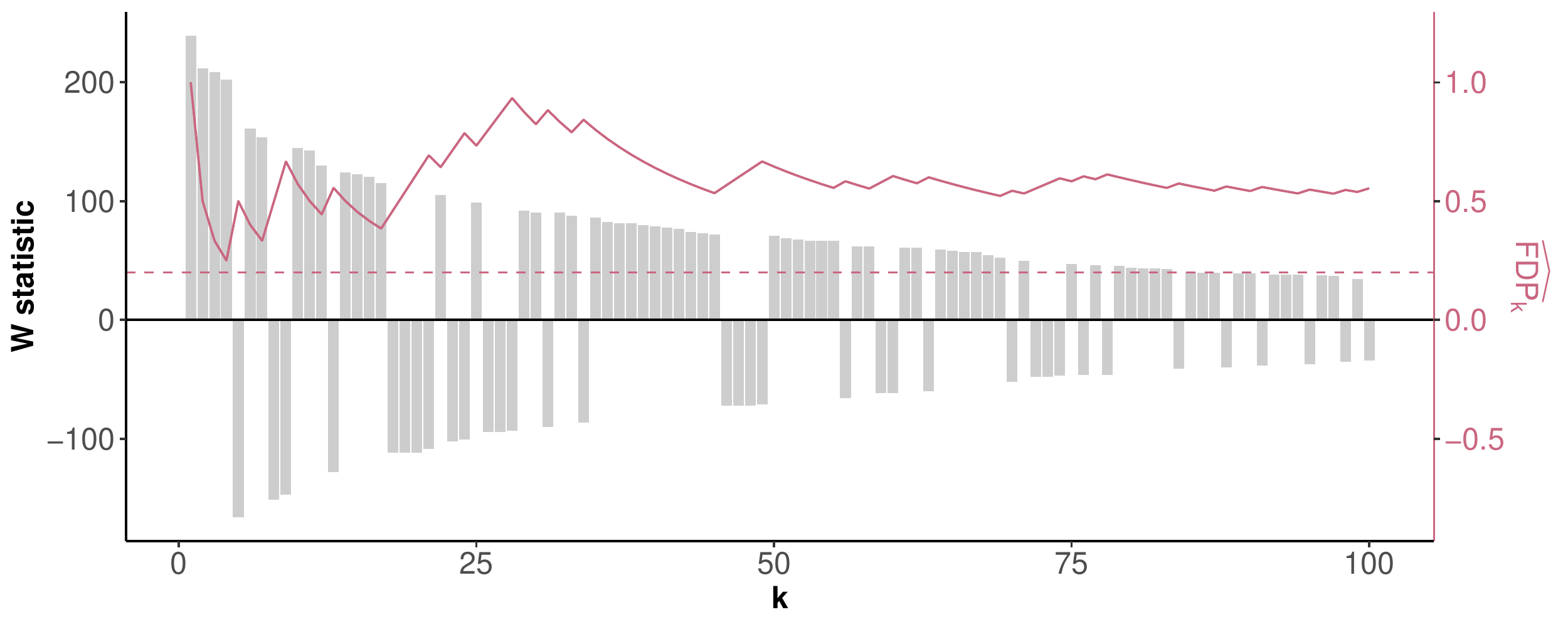}  
    \end{tabular}
    \caption{The 100 largest $W$ statistics of the S\&P 500 example, ranked by absolute value, and the estimated FDP $\hFDP_k$ from the Selective Seqstep procedure. Even if we use the more liberal FDP estimator without the ``$1+$,'' as plotted here, knockoffs is unable to make rejections at $\alpha = 0.2$, represented by the horizontal line.}
    \label{fig:wstatistics}
\end{figure}

\section{Numerical results}\label{sec:simulations}

\subsection{Fixed knockoff for Gaussian linear model}
In this section, we simulate under the Gaussian linear model to compare scenarios where the TPR of the best achievable knockoff method is close to zero against similar scenarios where knockoff methods perform well. For the design matrix $X \in \RR^{n\times d}$, we generate random matrices whose rows are generated i.i.d from $N(0, K), K \in \RR^{d\times d}$. We consider the following two regimes:
\begin{enumerate}[(a)]
    \item \textbf{Positively equi-correlated OLS estimator} $K^{-1}$ is an equicorrelation matrix with correlation $\rho = 0.2$, i.e. $K^{-1}_{ii}=1, 1\leq i\leq d$, and $K^{-1}_{ij} = 0.2, 1\leq i < j\leq d$;
    \item \textbf{Positively equi-correlated covariates} $K$ is an equicorrelation matrix with correlation $\rho = 0.2$.
\end{enumerate}
  In regime (a), the columns of $X$ are negatively correlated and the OLS test statistics are positive correlated, and we expect that any knockoff methods will have trivial power. In regime (b), the columns of $X$ are positively correlated and the OLS test statistics are negatively correlated.

We choose $n = 3000$ and $d = 1000$. For each $K$, we fix one realization of the random matrix, and then normalize its columns to obtain the design matrix $X$.
Next, we generate the response $y \in \RR^{d}$ as follows. First, to define $\beta$, we choose $s = 30$ coefficients uniformly at random and let $\beta_j = 5$ for each of the selected coefficients. We then generate $y = X\beta + \varepsilon$, where $\ep_i$ are i.i.d standard normal errors. The above data generating procedure is repeated 600 times for each $K$.

For each design matrix $X$, we generate the knockoff matrix $\tX$ using the MVR-knockoff \citep{spector2020powerful} and SDP-knockoff algorithms \citep{barber2015controlling}. For each instance of $(X, \tX, y)$, we first consider the knockoff* procedure introduced in Section \ref{sec:knockoffstar}, which is the best achievable knockoff method once the knockoffs have been constructed, but is infeasible since it can only be carried out with knowledge of the true coefficients $\beta$. We also consider a practically feasible knockoff method which uses the maximum lasso penalty level as test statistic.

In addition, we include the T3-knockoff* ``procedure'' from Section~\ref{sec:boundpowerrandom}, a simulation-based bound of any knockoff method under the assumptions of Theorem~\ref{thm:mainrandom}, where the analyst is not allowed to know the nonzero indices of $\beta$ at the time of determining the knockoff matrix $\tX$ (or equivalently, determining $\Delta$). $\beta$ is still revealed to the analyst immediately after the knockoffs are generated so the analyst can carry out the knockoff* method. T3-knockoff* is defined in Algorithm~\ref{alg:t3knockoff} and uniformly upper bounds the power achievable by SDP-knockoffs or MVR-knockoffs in any setting where $\beta$ is random with an exchangeable distribution over the $d$ indices, so it is more optimistic than SDP-knockoff* or MVR-knockoff*.

Finally, we consider the BH procedure and the Bonferroni test on OLS $p$-values for baseline comparison. Note that the BH procedure is not theoretically guaranteed to control the FDR at the desired level unless the columns of $X$ are orthogonal.

We obtain the SDP-knockoff matrix using the \texttt{knockoff} package~\citep{patterson2017knockoff}, and the MVR-knockoff using the \texttt{knockpy} package~\citep{spector2020powerful}. For both knockoff matrices, we then compute the the maximum lasso penalty level statistics with the \texttt{knockoff} package.
Overall, the performance of knockoff shows a stark contrast under these two regimes, while the performance of BH and Bonferroni appear to be much more stable. Figure \ref{fig:knockoff_power_example} shows the power of the BH, the Bonferroni and the SDP-knockoff tests. For reference, we also include the case where the design matrix is i.i.d. Gaussian. We find that the SDP-knockoff outperforms the BH method when the covariates are independent. However, we see that when the OLS test statistics are positively correlated, the TPRs of the oracle knockoff methods are close to zero. Table~\ref{table:main} shows the FDR and TPR of all methods for targeted FDR level $\alpha = 0.1$ and $0.2$. We find that the TPR of SDP-knockoff* is smaller than 0.02 when controlling the FDR at 0.2. 

We can understand the performance of knockoffs in each case by plotting the sorted log-odds values $\eta_{(j)}$, as we do in  Figure \ref{fig:etak}. While the knockoff* method can achieve the sorting pictued in Figure \ref{fig:etak}, feasible knockoff methods cannot achieve a perfect ordering because $\beta_j/\sigma$ is unknown. Recall that when the covariance matrix of the test statistics has factor model structure, the whitening step of knockoff destroys virtually all the information, and the log-odds of observing small $p$-values in the inference cannot rise above $-\log\alpha$. This is again confirmed by Figure \ref{fig:etak}, which shows the average of the $s=30$ largest log-odds across different trials. The rest of the log-odds are zero since the corresponding coefficient $\beta_j$ is zero. In particular, we find that most of the log-odds are smaller than $-\log\alpha$ with $\alpha = 0.1$.  Thus by Proposition \ref{prop:bigokd}, the number of rejections of any knockoffs method must be small. 

\subsection{Knockoff for multivariate Gaussian statistics}
\label{sec:mvnknockoff}

In Section ~\ref{sec:knockoffswhitened}, we reinterpreted the knockoff method and generalized the fixed-X knockoff procedure to multivariate normal test statistics. Here we use simulations to investigate the performance of different methods for testing the means of multivariate normal, and hint at the possible use cases and limitations of knockoffs for such problems.

We generate $d=1000$ dimensional multivariate Gaussian vectors $\htheta \sim \cN_d(\theta, K)$. The mean vector $\theta$ is generated in the same way as the linear coefficient $\beta$ in the previous simulation, except that the non-zero means $\theta_j$ are set to 3.5. We consider two types of covariance matrices:
\begin{enumerate}[(a)]
    \item \textbf{$K$ has factor model structure}. In particular, we let
    \begin{equation}
    \label{eq:Kfactormodel}
            K = I_d + \lambda\sum_{\ell=1}^{k}u_\ell u_\ell^T,
    \end{equation}
    where $u_\ell$ are drawn independently from the uniform sphere.
    \item \textbf{$K^{-1}$ has factor model structure}, i.e.
    \begin{equation}
    \label{eq:Kinvfactormodel}
            K^{-1} = I_d + \lambda\sum_{\ell=1}^{k}u_\ell u_\ell^T,
    \end{equation}
\end{enumerate}
In particular, we choose $k=2$ and $\lambda \in \{20, 100\}$. We normalize the covariance matrix $K$ such that $K$ has unit diagonal entries. After normalization, the largest eigenvalue of $K$ is approximately 19 and 75 for $\lambda=20$ and 100, respectively (the sum of all eigenvalue is $d=1000$). For each setup, we consider one fixed realization of the random matrix $K$. In each trial of our simulation, we first generate the mean vector $\theta$ and then the multivariate normal vector $\htheta$.

For each matrix $K$, we repeat the above data generating procedure $N=600$ times. 
Again, we consider both the SDP knockoff and the equi-correlated knockoff to create the diagonal matrix $\Delta$ that satisfies $\Delta\succeq K$. For each diagonal matrix $\Delta$, we first implement the oracular ordering and the associated knockoff* procedure defined in Section \ref{sec:knockoffstar}. 
For each observed $\htheta \sim \cN(\theta, K)$, we generate a pseudo-design matrix $X^*$ and pseudo-response $y^*$ as in \eqref{eq:packageknockoffs}.

Table \ref{table:mvnknockoff} shows the FDR and TPR of different methods for $\alpha = 0.2$. As expected, we found that the power of even the best achieveable knockoff method is less than that of Bonferroni when the covariance matrix $K$ has a factor model structure with reasonably large leading eigenvalue. This suggests that the knockoff-type approaches suffer from severe power loss when applied to general test statistics with factor model structure. However, when the precision matrix $K^{-1}$ has a factor model structure, it may be possible to use the knockoff framework procedure to design a test with superior TPR than baseline methods such as the BH. We leave this for future research.

\begin{figure}
    \centering
    \resizebox{\textwidth}{!}{
    \begin{tabular}{cc}
    \includegraphics[width=0.45\textwidth]{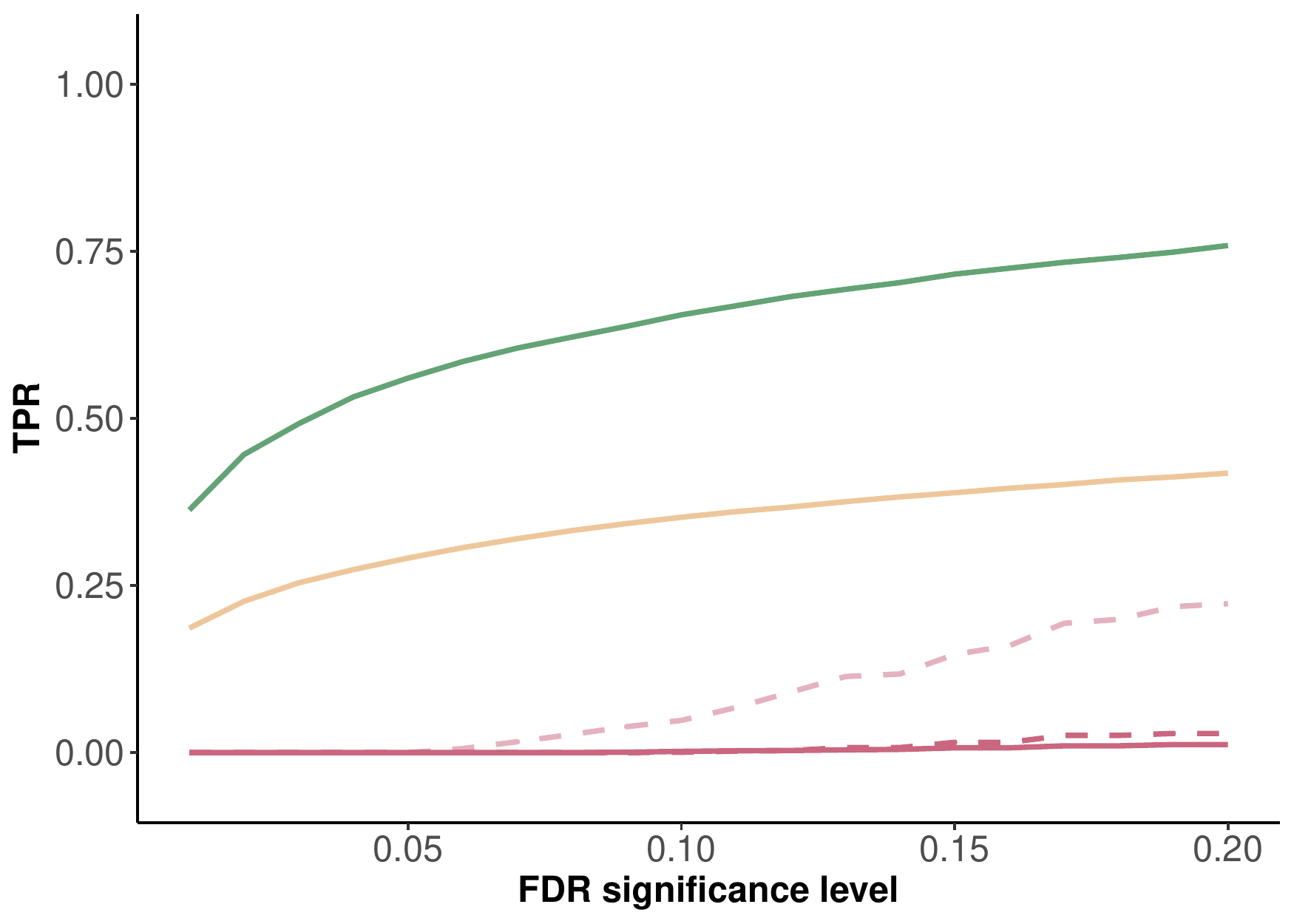} &
    \includegraphics[width=0.45\textwidth]{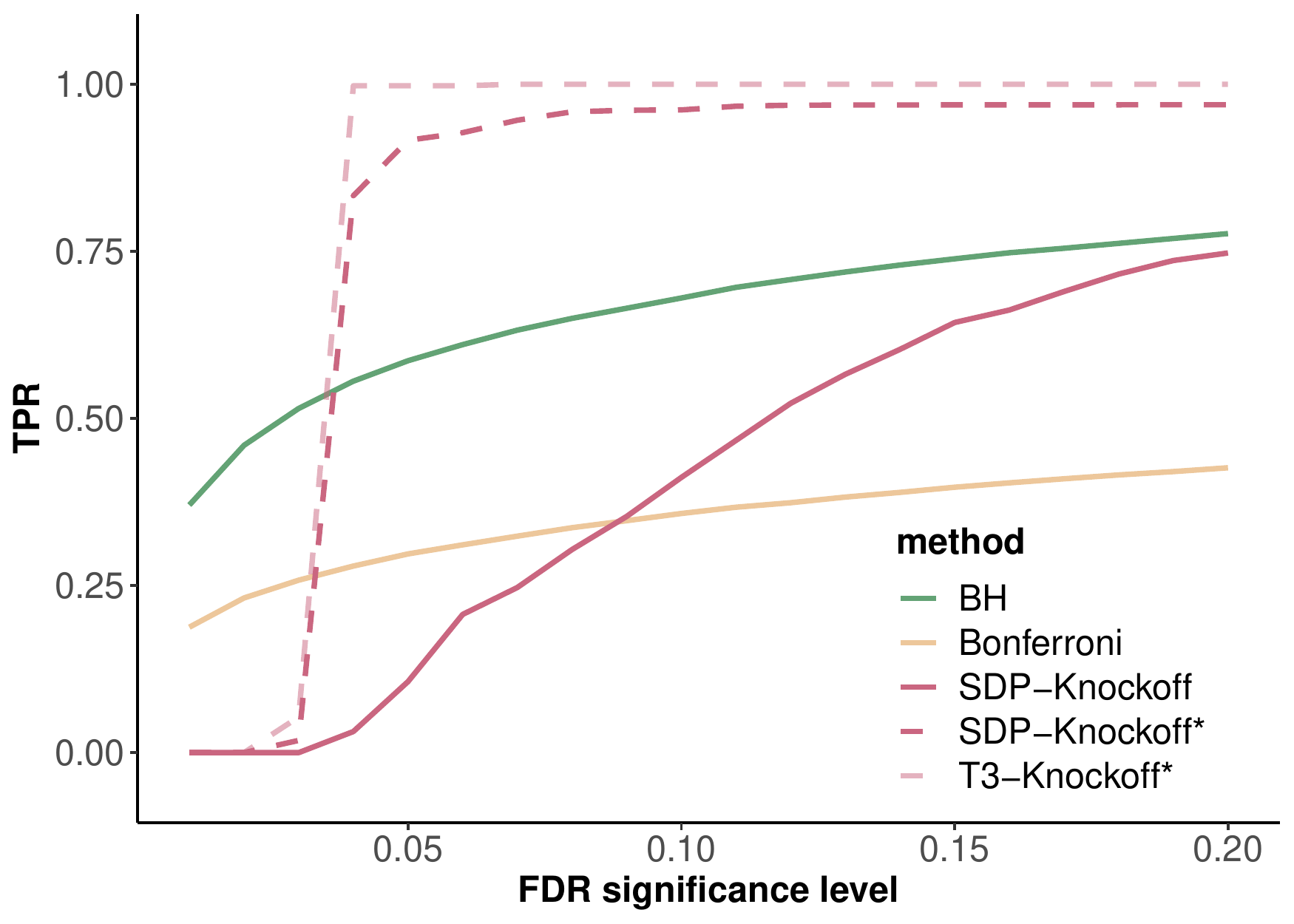} \\
         (a) Positively equi-correlated OLS estimator  &
         (b) Positively equi-correlated covariates       
    \end{tabular}
    }
    \begin{tabular}{c}
    \includegraphics[width=0.45\textwidth]{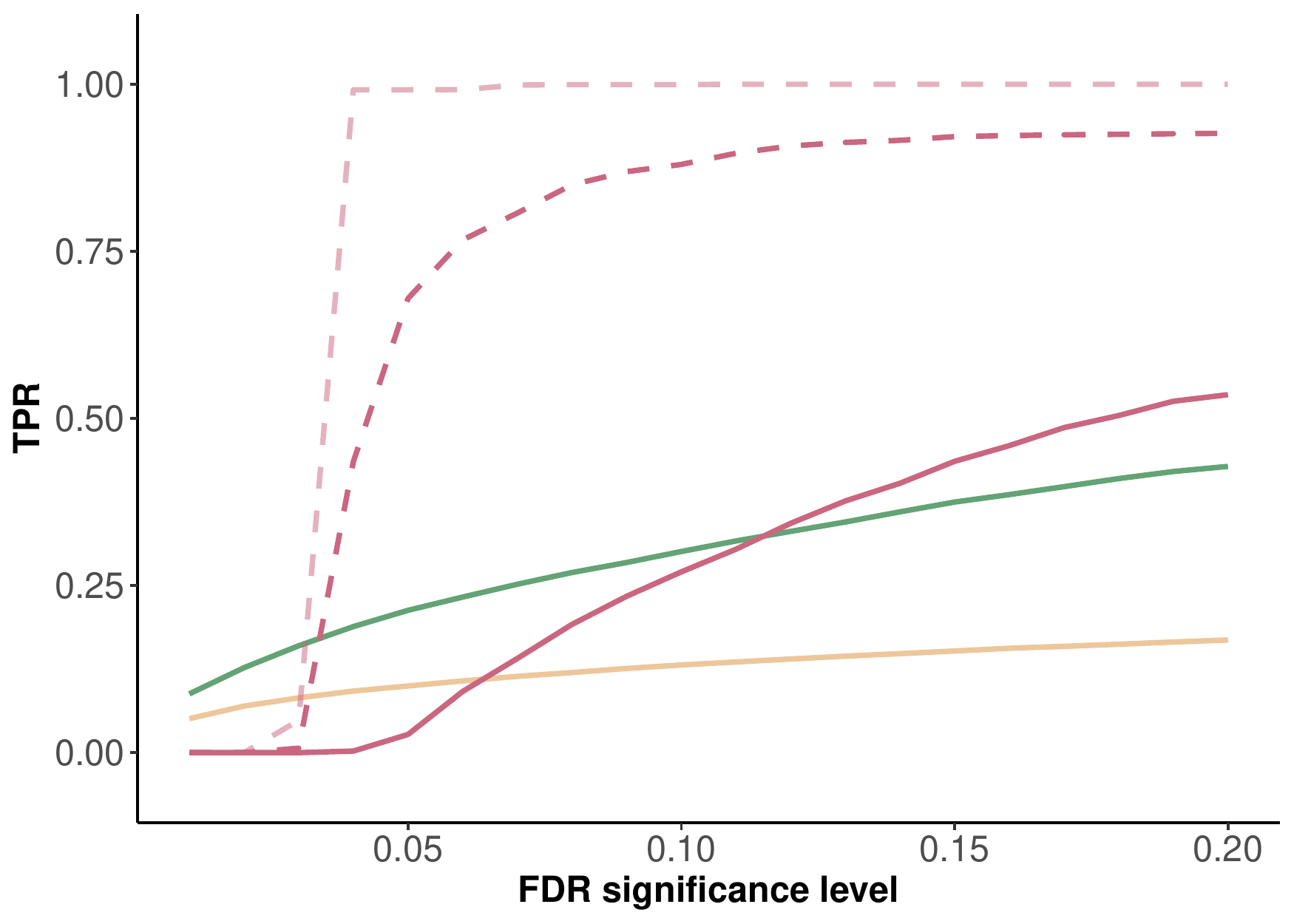} \\
        Reference: i.i.d Gaussian covariates with non-null $\beta_j = 3.5$.
    \end{tabular}
    \vspace{2mm}
    \caption{TPR of different tests under different FDR significance levels. Given the SDP-knockoff matrix, the SDP-knockoff* test is the best achievable knockoff method and can only be implemented with oracle knowledge about $\beta$. SDP-knockoff is a practically feasible knockoff method which uses the maximum lasso penalty level as the test statistics. The T3-knockoff* method defined in Algorithm~\ref{alg:t3knockoff}, and provides an upper bound on any knockoff methods where the analyst is not allowed to know the nonzero indices of $\beta$ at the time of determining the knockoff matrix $\tX$. In particular, the T3-knockoff* is more optimistic than the oracle procedure SDP-knockoff*.
    In Figure (a), the covariates in the design matrix are positively correlated with correlation approximately 0.2. In Figure (b), the covariates in the design matrix are negatively correlated, and the entries of the OLS estimate $\hbeta$ are positively correlated with correlation approximately 0.2. For reference, we also show the case where the covariates in the design matrix are i.i.d Gaussian, and with the non-null $\beta=3.5$, a regime considered by \citet{barber2015controlling}.}
    \label{fig:knockoff_power_example}
\end{figure}

\begin{figure}
    \centering
    \resizebox{\textwidth}{!}{
    \begin{tabular}{cc}
    \includegraphics[width=0.45\textwidth]{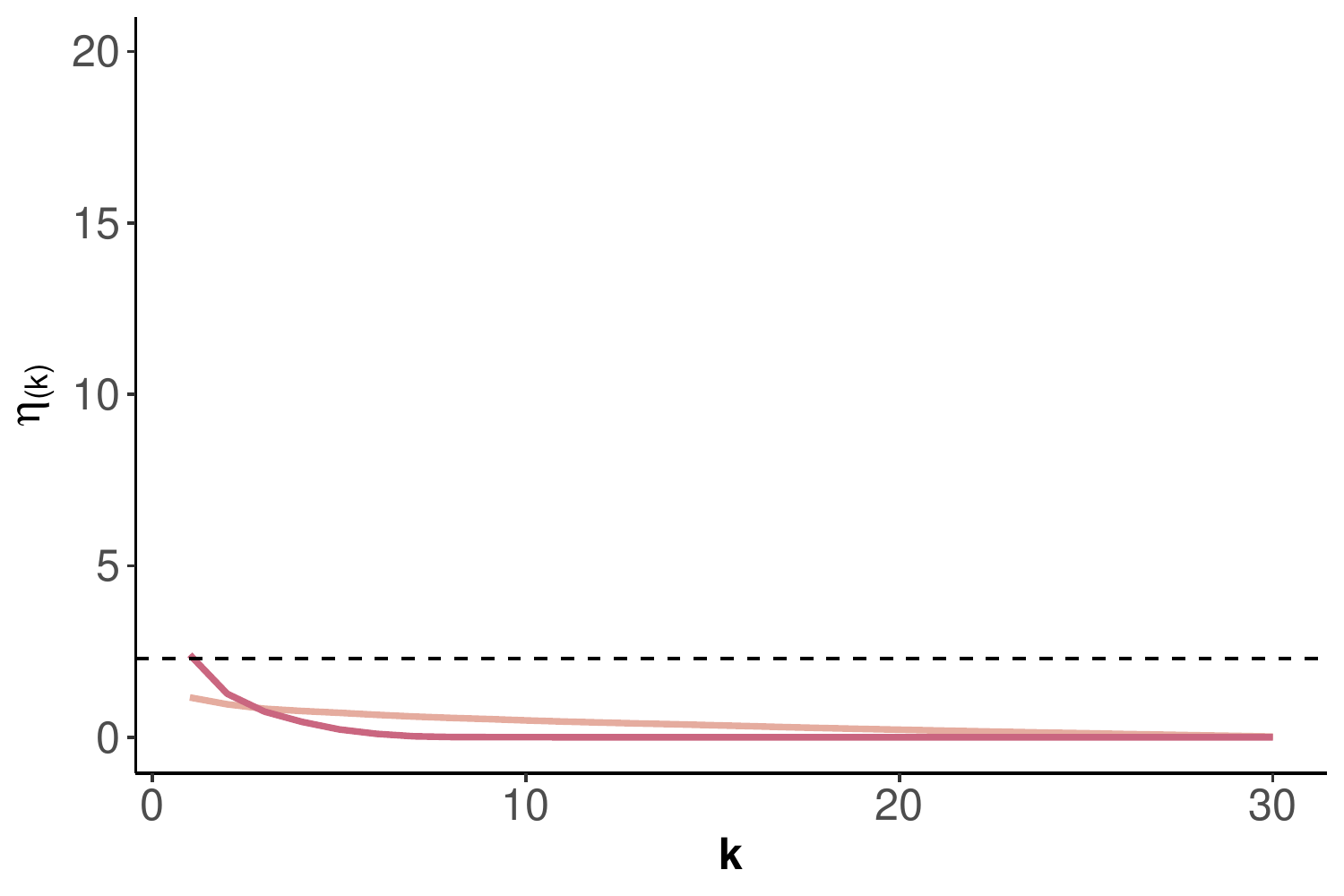} &
    \includegraphics[width=0.45\textwidth]{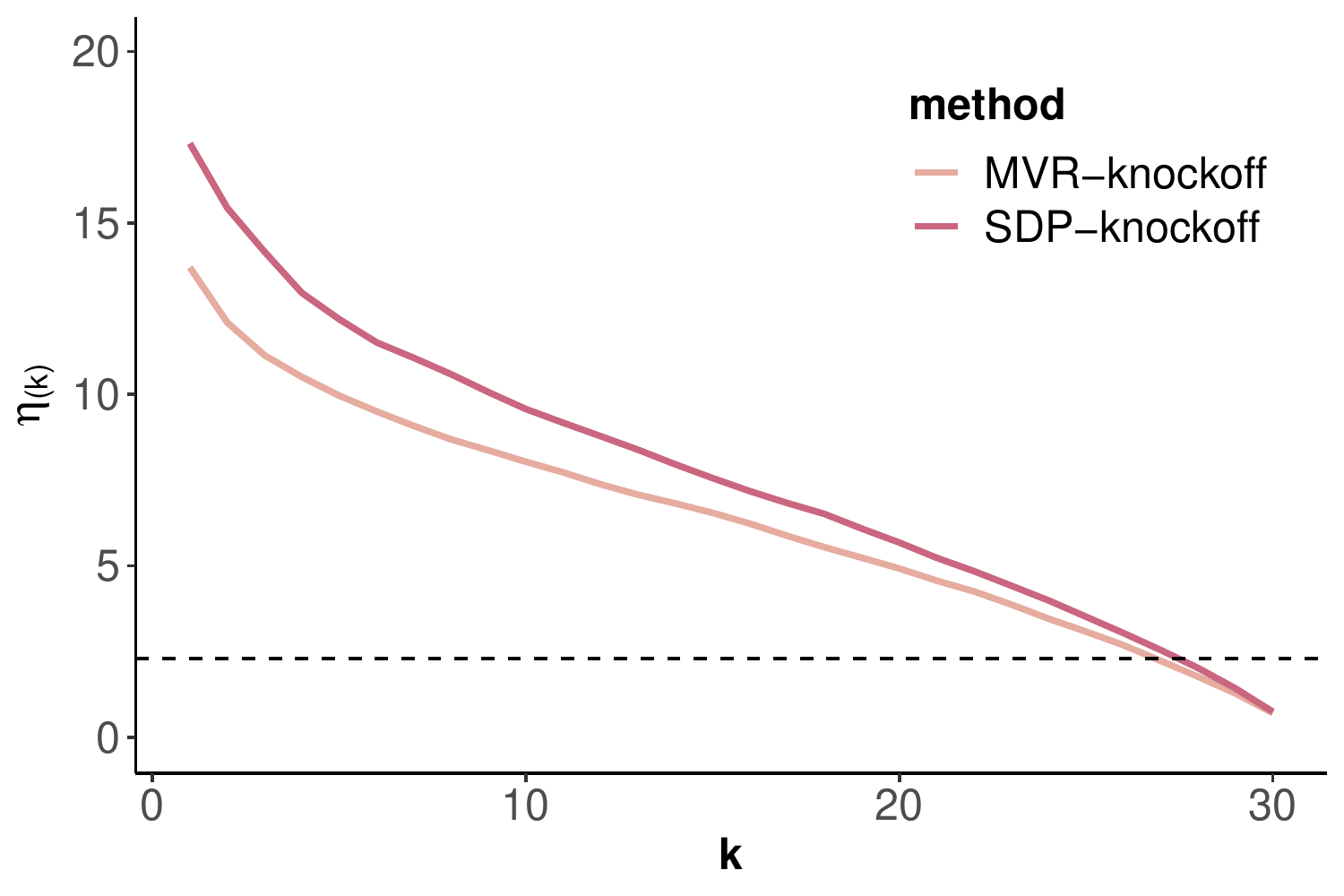} \\
         (a) Positively equi-correlated OLS estimator &
         (b) Positively equi-correlated covariates     
    \end{tabular}
    }
    \caption{The log-odds of observing a small p-value in the inference stage. The log-odds are computed as Equation~\ref{eq:etaj}. Only the $s=30$ non-zero log-odds are shown in both figures.  $\eta_{(k)}$ denotes the $k$th largest log-odd and $\eta_{(k)} = -\log 0.1$ is shown in the dashed black line.}
    \label{fig:etak}
\end{figure}

\begin{table*}\centering
\begin{subtable}[t]{\textwidth}
\centering
\resizebox{\textwidth}{!}{
\begin{tabular}{@{}ccccccccc@{}}\toprule
&& \multicolumn{2}{c}{SDP-knockoff} &\multicolumn{2}{c}{MVR-knockoff}  & \multicolumn{2}{c}{Other methods}\\
\cmidrule{3-4} \cmidrule{5-6} \cmidrule{7-8}
&& Knockoff* & Maximum penalty level & Knockoff* & Maximum penalty level & BH & Bonferroni \\ \midrule
\multirow{2}{*}{$\alpha = 0.1$} & FDR &0.00&0.00&0.00&0.00&0.09&0.01 \\
& TPR &0.00&0.00&0.00&0.00&0.65&0.35 \\ \midrule
\multirow{2}{*}{$\alpha = 0.2$} & FDR &0.00&0.01&0.00&0.00&0.18&0.02 \\
& TPR &0.03&0.00&0.02&0.01&0.76&0.42 \\
\bottomrule
\end{tabular}}
\caption{Positively equi-correlated OLS estimator}
\end{subtable}
\begin{subtable}[t]{\textwidth}
\centering
\resizebox{\textwidth}{!}{
\begin{tabular}{@{}cccccccc@{}}\toprule
&& \multicolumn{2}{c}{SDP-knockoff} &\multicolumn{2}{c}{MVR-knockoff} & \multicolumn{2}{c}{Other methods}\\
\cmidrule{3-4} \cmidrule{5-6} \cmidrule{7-8}
&& Knockoff* & Maximum penalty level & Knockoff* & Maximum penalty level & BH & Bonferroni \\ \midrule
\multirow{2}{*}{$\alpha = 0.1$} & FDR &0.06&0.07&0.05&0.06&0.10&0.01 \\
& TPR &0.96&0.44&0.95&0.44&0.68&0.36 \\ \midrule
\multirow{2}{*}{$\alpha = 0.2$} & FDR &0.16&0.17&0.15&0.16&0.19&0.01 \\
& TPR &0.97&0.77&0.96&0.74&0.78&0.43 \\
\bottomrule
\end{tabular}}
\caption{Positively equi-correlated covariates}
\end{subtable}
\caption{FDR and TPR of different methods under different target FDR levels.}
\label{table:main}
\end{table*}

\begin{table}\centering
\begin{subtable}[t]{\textwidth}
\centering
\resizebox{\textwidth}{!}{
\begin{tabular}{@{}cccccccc@{}}\toprule
&&\multicolumn{2}{c}{SDP-knockoff} & \multicolumn{2}{c}{MVR-knockoff} & \multicolumn{2}{c}{Other methods}\\
\cmidrule{3-4} \cmidrule{5-6} \cmidrule{7-8}
&& Knockoff* & Maximum penalty level & Knockoff* & Maximum penalty level  & BH & Bonferroni \\ \midrule
\multirow{2}{*}{$\lambda = 20$} & FDR &0.00&0.04&0.03&0.05&0.19&0.01 \\
& TPR &0.54&0.18&0.70&0.36&0.77&0.42 \\ \midrule
\multirow{2}{*}{$\lambda = 100$} & FDR &0.00&0.02&0.00&0.00&0.18&0.01 \\
& TPR &0.21&0.05&0.23&0.07&0.77&0.41 \\
\bottomrule
\end{tabular}
}
\caption{When $K$ has factor model structure}
\end{subtable}
\begin{subtable}[t]{\textwidth}
\centering
\resizebox{\textwidth}{!}{
\begin{tabular}{@{}cccccccc@{}}\toprule
&&\multicolumn{2}{c}{SDP-knockoff} & \multicolumn{2}{c}{MVR-knockoff} & \multicolumn{2}{c}{Other methods}\\
\cmidrule{3-4} \cmidrule{5-6} \cmidrule{7-8}
&& Knockoff* & Maximum penalty level & Knockoff* & Maximum penalty level & BH & Bonferroni \\ \midrule
\multirow{2}{*}{$\lambda = 20$} & FDR &0.18&0.20&0.18&0.19&0.19&0.02 \\
& TPR &1.00&0.71&0.99&0.70&0.76&0.41 \\ \midrule
\multirow{2}{*}{$\lambda = 100$} 
& FDR &0.18&0.15&0.19&0.17&0.19&0.02 \\
& TPR &0.99&0.45&0.99&0.48&0.77&0.41 \\
\bottomrule
\end{tabular}}
\caption{When $K^{-1}$ has factor model structure}
\end{subtable}
\caption{FDR and TPR of different methods for testing means of multivariate Gaussian with correlation matrix $K$ defined in Equations~\ref{eq:Kfactormodel} and \ref{eq:Kinvfactormodel}. Target FDR level $\alpha = 0.2$. ``Maximum Penalty Level" refers to the method where we first generate artificial design matrix and response $(X, y)$, and then apply the fixed-X knockoff.}
\label{table:mvnknockoff}
\end{table}

\section{Discussion}\label{sec:discussion}

\subsection{Knockoffs, randomized responses, and the selection-inference tradeoff}

Viewing knockoffs as a conditional post-selection inference method sheds light on what \citet{fithian2014optimal} called the {\em selection-inference tradeoff}: that the more we condition on, the less data remains for confirmatory analysis. This tradeoff is most obviously apparent in the case of data splitting, where the analyst selects a model or hypotheses to test by means of some exploratory analysis using a fraction of the data points, and then carries out confirmatory inference using only the remaining fraction, i.e. inference conditions on the initial data set (this assumes the two data sets are independent; otherwise data splitting may be invalid). A similar phenomenon is present in other conditional post-selection inference problems, where whatever statistics of the data we observe in the selection (exploratory) stage are unavailable as inferential evidence in the conditional inference (confirmatory) stage.

Compared to most other conditional inference methods, knockoffs conditions on much more about the data, holding out only the randomized and binarized $\sgn(\tbeta)$ for confirmatory inference. What is more, because $\sgn(\tbeta_j)\sim\text{Unif}\{-1,+1\}$ under $H_j$, the binary conditional $p$-values $\tp_j$ can never be smaller than $1/2$. The reason knockoff methods are nevertheless able to compete with and sometimes outperform other state-of-the-art multiple testing methods is because the FDR is an aggregate error criterion: to control it, knockoffs need never be confident about rejecting any individual hypothesis, only about the fraction of nulls early in the list. As a result, no individual $\tp_j$ needs to be minuscule, so long as most of the highly prioritized ones are $1/2$.

By giving up on making each $\tp_j$ powerful, knockoffs is able to use nearly all of the information in $\hbeta$ to supercharge the more flexible exploratory stage, betting on its ability to pack the front of the priority list with non-null hypotheses. This strategy of ``betting on exploration'' can pay off especially handsomely when Bayesian priors or structural assumptions like sparsity can be brought to bear during the exploration, which can use them in an unfettered way.

The whiteout phenomenon we describe here is an example of where that bet goes wrong, leaving too little information for inference. The fundamental problem is that the inference engine, Selective SeqStep requires {\em independent} binary $p$-values, which can only be created by adding enough noise to make $\Var(\tbeta)$ diagonal. If $\Sigma$ is ``too far from diagonal'' in the sense we describe, then this cannot be done without destroying the signal.

It may seem counterintuitive that adding more noise (larger $\Delta$) means using up more data for selection and leaving less for inference. In most methods that use randomized data for exploration, such as \citet{tian2018selective}, the opposite is true: adding more noise hides more information from the selection algorithm, preserving it for confirmatory inference. The difference is that, in knockoffs, the information ``left over'' after randomizing $\tbeta$ is also given to the analyst at exploration time, in the form of $\xi$. Instead it is the randomized $\tbeta$ that is (partly) held out for inference. 

We could equivalently define the whitening method in terms of Gaussian noise 
\[
\nu \;=\; -\Delta^{-1}\omega \;\sim\; \cN_d\left(0, \sigma^2 \Delta^{-1}\right), \quad \text{ with }\;\; \xi \;=\; A\hbeta + \nu
\]
viewed as a noisy version of $A\hbeta$, and $\tbeta = \Delta \Sigma^{-1} \hbeta - \Delta\xi$ viewed as the residual information. From this perspective, the ``noise variance'' is $\Delta^{-1}$, so that more noise (smaller $\Delta$) once again means holding out more information in the form of $\tbeta$.

\subsection{The whitening interpretation and \citet{spector2020powerful}}\label{sec:spectorjanson}

Our whitening interpretation sheds potentially interesting light on the phenomenon recently discovered by \citet{spector2020powerful} when $X^\tran X = \Sigma^{-1}$ is an equicorrelated covariance matrix with diagonal entries equal to $1$ and off-diagonal entries equal to $\rho \geq 0.5$. In this example the maximum eigenvalue of $\Sigma$ is $1$, so there is no ``whiteout'' problem, but the authors find that both equicorrelated and SDP knockoffs struggle to make any rejections. To understand why, note that both methods would set $D = 2(1-\rho)I_d$, so $\Delta = (1-\rho)^{-1}I_d$ and $A = \rho \mathbf{1}_d\mathbf{1}_d^\tran$. As a result, in the exploratory analysis the analyst observes 
\[
\xi \;\sim\; 
\cN\left( \rho \sum_{j=1}^d \beta_j, \; \rho\sigma^2\right) \cdot \mathbf{1}_d, \quad \text{ and }\; |\tbeta_j| \;\simind\; \frac{1}{1-\rho}\,|\cN(\beta_j, \sigma^2)|.
\]

Because only $\sum_j \beta_j$ and $|\beta|$ are identifiable from the exploratory data set, the analyst has no way to make an educated guess $\psi_j$ about $\sgn(\beta_j)$. This problem can be resolved by choosing $D$, or $\Delta$, more judiciously, as \citet{spector2020powerful} show.

\subsection{Power analysis in knockoffs}\label{sec:poweranalysis}

One takeaway message of Section~\ref{sec:bounds} is the crucial role played by the threshold $-\log\alpha$ in knockoffs methods' performance. If 
\begin{equation}\label{eq:badmu}
\mu_j \;=\; \frac{2\beta_j^2}{\sigma^2\Delta_{jj}} \;<\; -\log \alpha,
\end{equation}
then  we are more likely than not to observe $\eta_j < - \log \alpha$, since $\eta_j \sim \cN(\mu_j, 2\mu_j)$. In that case, even if we guess the alternative direction $\psi_j$ right we will still have
\begin{equation}\label{eq:badW}
\PP(W_j < 0) \;>\; \alpha \PP(W_j > 0), \quad\Longleftrightarrow\quad \PP(\tp_j = 1) \;>\; \alpha \PP(\tp_j = 1/2),
\end{equation}
so variable $j$ will be a net drag on the FDP estimator's struggle to remain below $\alpha$.

This observation can help us to do rudimentary power analysis at various stages of the procedure. For example, before we observe anything about the response vector $y$, we can inspect the diagonal entries of the matrix $\Delta = 2D^{-1}$ and ask how large $\beta_j/\sigma$ would have to be for us to have reasonable power to detect variable $j$. Doing a little algebra on inequality~\eqref{eq:badmu}, we arrive at
\begin{equation}\label{eq:SNRstar}
\mu_j \;=\; \frac{2\beta_j^2}{\sigma^2\Delta_{jj}} \;<\; -\log\alpha \quad \Longleftrightarrow \quad |\beta_j/\sigma| \;<\; \sqrt{\frac{-\Delta_{jj}\log\alpha}{2}} \;=\; \sqrt{\frac{-\log\alpha}{D_{jj}}}.
\end{equation}
We can think of \eqref{eq:SNRstar} as giving a critical threshold for the SNR of variable $j$. For example, suppose $\alpha = 0.05$, so $-\log\alpha \approx 3$. Then if $\Delta_{jj} = 6$ (or equivalently $D_{jj} = 1/3$), $|\beta_j/\sigma|$ should be larger than $3$ if we want $\eta_j$ to be above $-\log \alpha$ most of the time. Likewise, if $\Delta_{jj} = 32/3 \leq 11.7$ ($D_{jj} = 3/16 = 0.1875$), then the critical SNR threshold is about 4 for $\alpha = 0.05$. 

Importantly, because this variable-by-variable power analysis can be done {\em before} we observe anything about $y$, we can still change course if we don't like the $\Delta_{jj}$ values we get --- we could either choose a different knockoff matrix or abandon the knockoffs framework and use BH instead, without any threat to either method's FDR control guarantees.

Whereas our theoretical results emphasize lower bounds on $\Delta$, for an analyst intending to use knockoffs the more interesting question is how large each $\Delta_{jj}$ actually is in the specific knockoff matrix they are about to use for their problem. Until more is understood about what regimes lead knockoffs to dominate BH or vice versa, we recommend that analysts at least inspect the $\Delta_{jj}$ values in light of these SNR thresholds as a diagnostic tool. If, say, only a few of the $\Delta_{jj}$ values are below $10$, then the matrix $\Sigma$ may not be a suitable problem structure for knockoffs. In our stock market example, only two $\Delta_{jj}$ values are below $6$ and only $50$ are below $11.7$, suggesting that only very strong signals have a good chance of generating rejections.

\subsection{Concluding remarks}

We emphasize once again that the results we derive for these asymptotic regimes do not imply that fixed-$X$ knockoffs are underpowered as a general rule. On the contrary, we believe our results are interesting precisely because the opposite is true: there are many problems where existing fixed-$X$ knockoff methods outperform all other known FDR-controlling methods. In particular, BH and knockoffs represent two completely different approaches to multiple testing in regression or with multivariate normal test statistics. To give practitioners appropriate guidance about which one to use, more work is needed to answer several crucial questions: When do knockoff methods outperform the BH procedure, and which implementations perform the best? How can practitioners recognize which is better for their context? Can hybrid methods such as those of \citet{sarkar2021adjusting}, or methods yet to be developed, balance the tradeoffs between the two approaches, preserving the strengths of the knockoffs framework without suffering its drawbacks? By identifying pitfalls for the knockoffs framework our results represent strides toward a more complete understanding of multiple testing in the linear model.

\section{Proofs}
\label{sec:proofs}
\subsection{Proof of Proposition \ref{prop:knockoffstaropt}}

\knockoffstaropt*

\begin{proof}
If $d_1 = 0$ there is nothing to prove, so assume $d_1 \geq 1$. Given $\xi$, $|\tbeta|$, and $\psi$, the conditional $p$-values are independent with
\[
\logit\,\PP(\tp_j = 1/2) \;=\; \eta_j \cdot \sgn(\psi_j\beta_j).
\]
If we hold the ordering fixed, the rejection set is stochastically increasing in each of the above log-odds, so the TPP is always made stochastically larger by setting $\psi_j = \sgn(\beta_j)$ whenever $\beta_j \neq 0$. We can therefore restrict our attention to the case where the log-odds for each variable is $\eta_j \geq 0$.

Next, define the indicator $E_k = 1\{\hFDP_k^\wh \leq \alpha\}$. If $\eta_{[j]} < \eta_{[j+1]}$, then swapping the two leaves $E_k$ fixed for all $k \neq j$, but increases the conditional probability that $E_j = 1$ given $E_{-j}$. Therefore, $\hk$ is stochastically largest when the log-odds are arranged in decreasing order. The same is true for the number of rejections $R_{\hk} = \left\lceil \frac{1+\hk}{1+\alpha} \right\rceil$.

Likewise, if $\eta_{[j]} < \eta_{[j+1]}$ then $H_{[j+1]}$ must be non-null, so conditional on $E_{-j}$ the number of rejected non-null hypotheses is also made stochastically larger by arranging the log-odds in decreasing order.
\end{proof}

\subsection{Proof of Proposition \ref{prop:bigokd}}

For any $\alpha$ and $\delta$, we define
\[
p = \frac{\alpha}{1+\alpha}, \quad q_{\delta} = \frac{\alpha + \delta}{1 + \alpha + \delta}.
\]
We will now prove the following proposition, which is stronger and more precise than Proposition \ref{prop:bigokd}.

\begin{proposition}
\label{prop:bigokd_technical}
    Suppose that
    \[
     \eta_{(k^*)} < -\log (\alpha + \delta), \delta > 0,
    \]
    where $k^*\geq 1/\alpha$. Define
       \[
   C_1(\alpha, \delta) = \frac{1}{1+\alpha}\left[\max\left\{1, \frac{4\alpha(1+\alpha+\delta)}{\delta}\right\} + 1\right],
   \]
   and
   \[
   C_2(\alpha, \delta) = \frac{1}{1+\alpha}\left[\frac{e}{4\sqrt{\pi}}\frac{\sqrt{1+\alpha}q_\delta}{\sqrt{p(1-p)}}c_h^{-3/2}\min\left\{\frac{2(q_{\delta}-p)}{p(1-q_{\delta})}, \;1\right\} + 2 \right], 
\]
where
\[
\lambda^* = p + \frac{q_{\delta} - p}{4} < q_\delta, \quad \text{and} \quad c_h= - \left[\lambda^*\log \frac{q_\delta}{\lambda^*} + (1 - \lambda^*)\log \frac{1 - q_\delta}{1 - \lambda^*}\right] = O(\delta^2).
\]
    Then the expected number of rejections for any knockoff procedure at FDR significance level $\alpha$ is upper bounded by $C_1(\alpha, \delta)k^* + C_2(\alpha, \delta)$.

\end{proposition}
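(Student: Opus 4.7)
The plan is to recast the knockoff procedure as the last hitting time of a random walk to a threshold, and then bound its tail using a stochastic coupling to a biased random walk with strictly positive drift after index $k^*$. Recall that writing $p = \alpha/(1+\alpha)$ and $V_k = (1-p) + A_k - pk$ (with $A_k = \sum_{j\leq k}1\{\tp_{[j]}=1\}$), the condition $\hFDP_k^\wh \leq \alpha$ is equivalent to $V_k \leq 0$, so $\hk$ is the last time the walk $V_k$ dips to zero. By \eqref{eq:Rformula}, the number of rejections satisfies $R_{\hk} \leq (2 + \hk)/(1+\alpha)$, hence it suffices to control $\EE \hk$.

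Next I would use the hypothesis that $\eta_{(k)} < -\log(\alpha+\delta)$ for $k \geq k^*$, which forces $\PP(\tp_{[k]} = 1 \mid \xi,|\tbeta|) > q_\delta = (\alpha+\delta)/(1+\alpha+\delta)$ for all such $k$. I would couple the indicators $1\{\tp_{[k]}=1\}$ for $k \geq k^*$ to i.i.d.\ Bernoulli$(q_\delta)$ variables $Z^\delta_k$ so that $Z^\delta_k \leq 1\{\tp_{[k]}=1\}$ pointwise. This yields $V_k \geq \widetilde V_k := V_{k^*-1} + \sum_{j=k^*}^k (Z^\delta_j - p)$ for $k \geq k^*$, a walk with positive drift $q_\delta - p > 0$ started from a value no smaller than $1 - pk^*$. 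Crucially, $q_\delta - p = \delta / [(1+\alpha)(1+\alpha+\delta)]$, so the ratio $p/(q_\delta - p) = \alpha(1+\alpha+\delta)/\delta$ governs how long the walk takes to escape the initial deficit $pk^* - 1$.

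I would then fix the shape parameter $\lambda^* = p + (q_\delta - p)/4 \in (p, q_\delta)$ from the proposition's statement and set $m_0 = \lceil \lambda^*(k^*-1)/(\lambda^*-p) + (1-p)/(\lambda^*-p) \rceil$, which is exactly the smallest index for which $pm - (1-p) \leq \lambda^*(m - k^* + 1)$. A direct computation shows that $\lambda^*/(\lambda^*-p) = 1 + 4\alpha(1+\alpha+\delta)/\delta$, which matches the leading coefficient in $C_1(\alpha,\delta)$. Writing $\EE\hk = \sum_{m\geq 1}\PP(\hk \geq m)$, bounding the first $m_0$ terms by $1$ yields the $C_1(\alpha,\delta)k^*$ contribution (after dividing by $1+\alpha$ and absorbing the ceiling constant into the ``$+1$'' in $C_1$ and the ``$+2$'' in $C_2$). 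For $m \geq m_0$ I would union-bound using the coupling,
\[
\PP(\hk \geq m) \;\leq\; \sum_{k \geq m}\PP(V_k \leq 0) \;\leq\; \sum_{k \geq m}\PP\!\left(\mathrm{Bin}(k-k^*+1,\,q_\delta) \leq \lambda^*(k-k^*+1)\right).
\]
Each summand is a lower-tail deviation of a binomial from mean $q_\delta$ to $\lambda^*$, for which the Chernoff exponent is exactly the rate $c_h$ defined in the proposition.

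The main obstacle will be the quantitative refinement needed in the last step: a crude Chernoff bound $e^{-(k-k^*+1)c_h}$ summed geometrically gives only $(1-e^{-c_h})^{-1}$, which has the wrong scaling as $\delta \downarrow 0$. To recover the $c_h^{-3/2}$ prefactor in $C_2$ along with the explicit $e/(4\sqrt{\pi})$ and $\sqrt{p(1-p)}$ constants, I would replace Chernoff by a Stirling-based estimate of the binomial point mass, giving each tail probability a factor of order $1/\sqrt{k-k^*+1}$ in front of the exponential. Summation (or integration) of $\sum k^{-1/2}e^{-kc_h} \sim \sqrt{\pi/c_h}$ combined with an additional factor of $1/c_h$ from summing the exponential envelope produces the $c_h^{-3/2}$ scaling. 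The $\min\{2(q_\delta - p)/(p(1-q_\delta)),\,1\}$ factor in $C_2$ then reflects a case split between two regimes of the sharp binomial tail: either the drift $q_\delta - p$ is small (in which case a local CLT approximation is tight and contributes the ratio), or it is already order $p(1-q_\delta)$ (in which case the geometric-sum bound is saturated at $1$). Getting these explicit constants right, rather than just the $O(k^*)$ scaling, is the bookkeeping-heavy part of the argument; everything else is controlled by the coupling and the crossing-time estimate sketched above.
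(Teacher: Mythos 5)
Your overall architecture is aligned with the paper's: cast the rejection count as the last crossing time of the random walk $S_k=\sum_{j\le k}(p-Z_j)$, use \eqref{eq:Rformula} to reduce to $\EE\hk$, couple the post-$k^*$ increments to i.i.d.\ $\mathrm{Bern}(q_\delta)$, and split $\EE\hk$ at an index $\approx C_m k^*$ that matches your $m_0$ up to the $\max\{\cdot,1\}$ convention. Your computation $\lambda^*/(\lambda^*-p)=1+4\alpha(1+\alpha+\delta)/\delta$ is correct and does recover the $C_1$-coefficient.

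The gap is in how you handle the tail term. You bound $\EE\hk \le m_0 + \sum_{m>m_0}\sum_{k\ge m}\PP(V_k\le 0)$ and plan to control each $\PP(V_k\le 0)$ by a Cram\'er/Bahadur--Rao estimate of the binomial \emph{tail}. The paper instead bounds $\PP(\hk=r)$ \emph{exactly} rather than union-bounding over crossings: it writes $\{\hk = r\}\subset A_1\cap A_2$ where $A_1=\{\sum_{j\le r}\tilde Z_j=\lfloor pr+pk^*\rfloor\}$ is a single binomial \emph{point mass} (Stirling gives the $1/\sqrt{r}$ prefactor and the $e/(2\pi\sqrt{p(1-p)})$ constant) and $A_2$ is the event that the post-$r$ walk never dips below zero again. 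Crucially $A_1\perp A_2$, so the bound is a product, and $\PP(A_2)$ is estimated by the exponential-martingale/optional-stopping argument in Lemma~\ref{lemma:rwdrift} --- that is exactly where the factor $\min\{2(q_\delta-p)/(p(1-q_\delta)),1\}$ enters. Your union bound over all $k\ge m$ effectively over-counts every return to $(-\infty,0]$ and never sees the ``no future return'' event; you would not recover that $\min$-factor. Your proposed explanation for it --- a case split ``between two regimes of the sharp binomial tail'' --- is therefore not what produces it; the prefactor in a Bahadur--Rao lower-tail estimate has a different algebraic form (involving $1-e^{\theta^*}$ for the tilting parameter $\theta^*$), not the gambler's-ruin quantity $1-\lambda$ of Lemma~\ref{lemma:rwdrift}.

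As a consequence, while your union bound would (after passing to a pointwise Stirling estimate with the $1/\sqrt{k}$ prefactor) yield the correct $c_h^{-3/2}$ scaling, the multiplicative constant is not the one claimed: you would end up with a $C_2$ that is strictly larger when $\delta$ is small, where $\min\{2(q_\delta-p)/(p(1-q_\delta)),1\}\asymp\delta$. To hit the stated $C_2(\alpha,\delta)$ you need the paper's independence decomposition and the separate martingale bound on the ``never return'' probability; a union bound cannot substitute for it. I would recommend replacing your union-bound step with the paper's decomposition of $\{\hk=r\}$ into the independent events $A_1$ and $A_2$, bounding $\PP(A_1)$ with Stirling and $\PP(A_2)$ with an exponential martingale, and then summing $\sum_{r>C_m k^*} r\,\PP(A_1)\PP(A_2)$ by comparison to $\int_0^\infty\sqrt{r}\,e^{-c_h r}\,dr$.
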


\begin{proof}

Consider the random walk
\[
    S_k = \sum_{j=1}^k (p - Z_j), \quad \text{where } Z_j \simind \text{Bern}\left(q_j\right), \quad p = \frac{\alpha}{1+\alpha}, \quad
    q_j = 
    \frac{e^{-\eta_{(j)}}}{1 + e^{-\eta_{(j)}}}.
\]
Then by \eqref{eq:Rformula}, for any knockoff procedure, the number of rejections $R$ is upper bounded by $\frac{2 + \hk}{1+\alpha}$, where
\[
\hk = \max\left\{k: S_k \geq \frac{1}{1+\alpha}\right\} \leq \max\left\{k: S_k \geq 0\right\}.
\]
Consider another random walk
\[
\tilde{S}_k = \sum_{j=1}^k (p - \tilde{Z}_j), \quad \text{where } \tilde{Z}_j \simiid \text{Bern}\left(q_{\delta}\right), \quad 
q_{\delta} = \frac{\alpha + \delta}{1 + \alpha + \delta} \leq q_{(k^*)}
\]
Note that
\[
S_k \;=\; \sum_{j=1}^{k^*} (p - Z_j) + \sum_{j=k^*}^d (p - Z_j) \;\leq\; pk^* + \sum_{j=k^*}^d (p - Z_j).
\]

Since $q_1\leq q_2\leq ...$, we know that
$\sum_{j=k^*}^d (p - Z_j)$ is stochastically smaller than $\sum_{j=k^*}^d (p - \tilde{Z}_j)$, where $\tilde{Z}_j \simiid \text{Bern}\left(q_{(k^*)}\right)$.
Therefore $S_k$ is stochastically smaller than $pk^* + \tilde{S}_{k-k^*}.$
Therefore
\begin{equation}
\label{eq:rejectionnumberbound}
    \EE[\hk] \;\leq\; \EE[\max\left\{k: S_k \geq 0\right\}] \;\leq\; \EE\left[\max\left\{k: \tilde{S}_{k-k^*} \geq -pk^*\right\}\right] \;=\; k^* + \EE\left[\max\left\{k: \tilde{S}_{k} \;\geq\; -pk^*\right\}\right].
\end{equation}
Define
\[
p(r) \stackrel{\Delta}{=} \PP\left(\max\left\{k: \tilde{S}_k \geq -pk^*\right\} \;=\; r\right).
\]
Let $C_m = \max\{4\left(\frac{q_{\delta}}{p}-1\right)^{-1}, 1\}$, and then
\begin{equation}
\begin{aligned}
\label{eq:lasthittingtime}
&\EE\left[\max\left\{k: \tilde{S}_{k} \geq -pk^*\right\}\right] \\
\;=\; &\sum_{r=1}^{\infty}r\PP\left(\max\left\{k: \tilde{S}_{k} \geq -pk^*\right\}=r\right) \\
\;=\; &\sum_{r=1}^{C_mk^*}r\PP\left(\max\left\{k: \tilde{S}_{k}
-pk^*\right\}=r\right) +
\sum_{r=C_mk^* + 1}^{\infty}r\PP\left(\max\left\{k: \tilde{S}_{k}
-pk^*\right\}=r\right)\\
\;\leq\; &C_mk^*\sum_{r=1}^{C_mk^*}\PP\left(\max\left\{k: \tilde{S}_{k}
-pk^*\right\}=r\right) +
\sum_{r=C_mk^* + 1}^{\infty}r\PP\left(\max\left\{k: \tilde{S}_{k}
-pk^*\right\}=r\right)\\
\;\leq\; &C_mk^* + \sum_{r=C_mk^* + 1}^{\infty}r\PP\left(\max\left\{k: \tilde{S}_{k}
-pk^*\right\}=r\right) \\
\;=\;& C_mk^* + \sum_{r=C_mk^* + 1}^{\infty}rp(r).
\end{aligned}
\end{equation}
Therefore, combining Equations \ref{eq:lasthittingtime} and \ref{eq:rejectionnumberbound}, we have
\[
    \EE[\hk] \;\leq\; k^* + \EE\left[\max\left\{k: \tilde{S}_{k} \geq -pk^*\right\}\right] \;\leq\; k^* + C_mk^* + \sum_{r=C_mk^* + 1}^{\infty}rp(r) = (C_m+1)k^* + \sum_{r=C_mk^* + 1}^{\infty}rp(r).
\]
Recalling the definitions of $C_m$ and $q_{(k^*)}$, we have
\[
4\left(\frac{q_{\delta}}{p}-1\right)^{-1} 
\;=\; 4\left( \frac{(\alpha + \delta)(1 + \alpha)}{1 + \alpha + \delta} -1 \right)^{-1}
\;=\; \frac{4\alpha(1+\alpha+\delta)}{\delta}.
\]
Therefore we have $C_m + 1 \leq C_1(\alpha, \delta)$. 
Turning to the second term, by Lemma \ref{lemma:pr} we have
\begin{equation}
    \sum_{r=C_mk^* + 1}^{\infty}rp(r) \leq \frac{q_\delta e }{2\pi \sqrt{p(1-p)}}  \,\cdot\, \min\left\{\frac{2(q_{\delta}-p)}{p(1-q_{\delta})}, \;1\right\}
    \sum_{r=C_mk^* + 1}^{\infty} \sqrt{r}e^{-c_hr}.
\end{equation}
Note that we can bound the summation $\sum_{r=C_mk^* + 1}^{\infty} \sqrt{r}e^{-c_hr}$ by (note that $C_mk^* + 1 \geq 1/\alpha + 1$)
\begin{equation}
    \sum_{r=1/\alpha + 1}^{\infty} \sqrt{r}e^{-c_hr} \;\leq\; \sqrt{1+\alpha}\sum_{r=1/\alpha + 1}^{\infty} \sqrt{r-1}e^{-c_hr} \;\leq\; \sqrt{1+\alpha}\int_{1/\alpha}^{\infty} \sqrt{r}e^{-c_hr}dr \;\leq\; \sqrt{1+\alpha}\int_{0}^{\infty} \sqrt{r}e^{-c_hr}dr,
\end{equation}
where
\[
\int_{0}^{\infty} \sqrt{r}e^{-c_hr}dr \;=\; \int_{0}^{\infty} 2(2c_h)^{-3/2}y^2e^{-\frac{y^2}{2}}dy \;=\;(2c_h)^{-3/2} \int_{-\infty}^{\infty} y^2e^{-\frac{y^2}{2}}dy \;=\; \sqrt{2\pi}(2c_h)^{-3/2}
\]
The first equality above is obtained by change of variable $r=y^2/2c_h$.
Therefore,
\[
\sum_{r=C_mk^* + 1}^{\infty} \sqrt{r}e^{-c_hr} \leq \sqrt{1+\alpha}\sqrt{2\pi}(2c_h)^{-3/2}.
\]
Thus
\[
\sum_{r=C_mk^* + 1}^{\infty}rp(r)  \;\leq\;
\frac{e}{4\sqrt{\pi}}\frac{\sqrt{1+\alpha}}{\sqrt{p(1-p)}}c_h^{-3/2}\min\left\{\frac{2(q_{\delta}-p)}{p(1-q_{\delta})}, \;1\right\}
\]
and the proposition is proved.
\end{proof}

\subsection{Proof of results in Sections~\ref{sec:boundpower}--\ref{sec:boundpowerrandom}}

\main*

\begin{proof}
Assume without loss of generality that $\Delta_{11} \geq \Delta_{22} \geq \cdots \geq \Delta_{dd}$, so we have for all $j > k$ that
\[
\Delta_{jj} \;\geq\; b_j(\Sigma) \;\geq\; jb_k(\Sigma)/k.
\]
If $\beta_{j}^2 \leq \beta_{(k)}^2$, then we have
\[
\mu_j \;=\; \frac{2\beta_j^2}{\sigma^2 \Delta_{jj}} \;\leq\; \frac{2k\beta_{(k)}^2}{\sigma^2 j b_k(\Sigma)} \;\leq\; \frac{-k\log\alpha}{2j}.
\]
Define 
\[
N_\alpha = \#\left\{i \geq 1:\; \beta_{k+i}^2 \leq \beta_{(k)}^2, \;\eta_{k+i} > -\frac{1}{2}\log \alpha\right\}.
\]
At most $k-1$ indices $j$ have $\beta_j^2 > \beta_{(k)}^2$, so the total number of indices with $\eta_j \geq -\frac{1}{2}\log\alpha$ is at most $2k + N_{\alpha} - 1$. If $N_\alpha = n$, then we have
\begin{equation}\label{eq:etabound:pf2}
\eta_{(2k+n)} \;\leq\; -\frac{1}{2}\log\alpha,
\end{equation}
in which case Proposition~\ref{prop:bigokd} implies that the conditional expected number of rejections is bounded above by $(2k+n)C_1^\dagger(\alpha) + C_2^\dagger(\alpha)$, where $C_i^\dagger(\alpha) = C_i(\alpha, \sqrt{\alpha} - \alpha)$ for $i=1,2$. Therefore the expected number of rejections is 
\begin{align*}
\EE R &\;=\; \sum_n \EE [R \mid N_\alpha = n] \, \PP(N_\alpha = n)\\
&\;\leq\; \sum_n \PP(N_\alpha = n) \cdot \left[(2k+n)C_1^\dagger(\alpha) + C_2^\dagger(\alpha)\right] \,\\
&\;\leq\; (2k+\EE N_\alpha)C_1^\dagger(\alpha) + C_2^\dagger(\alpha).
\end{align*}
Applying Lemma~\ref{lem:Nalpha} with $z_i = \eta_{k+i}$ implies $\EE N_\alpha \leq C_3(\alpha) k$, so the result holds for 
\[
    C_1^*(\alpha) \;=\; (2+C_3(\alpha))\,C_1(\alpha, \sqrt{\alpha}-\alpha), \quad\text{ and }\;\;
    C_2^*(\alpha) \;=\; C_2(\alpha, \sqrt{\alpha}-\alpha).
\]
If $\beta_{(k)}$ is replaced with $\beta_{(1)}$ in \eqref{eq:finitesamplecondition} then the same proof applies, except that instead of $k-1$ indices with large $\beta_j$, there are none. Then we can replace $2k+n$ with $k+1+n$ in \eqref{eq:etabound:pf2}, and the result holds for
\[
C_1^*(\alpha) \;=\; (1+C_3(\alpha))\,C_1(\alpha, \sqrt{\alpha}-\alpha), \quad\text{ and }\;\;
    C_2^*(\alpha) \;=\; C_1(\alpha, \sqrt{\alpha}-\alpha) + C_2(\alpha, \sqrt{\alpha}-\alpha).
\]

\end{proof}

\mainrandom*

\begin{proof}
Define $K = \lfloor k/\pi_1 \rfloor$ and assume without loss of generality that $\Delta_{11} \geq \Delta_{22} \geq \cdots \geq \Delta_{dd}$, so we have for all $j > K$ that
\[
\Delta_{jj} \;\geq\; b_j(\Sigma) \;\geq\; jb_K(\Sigma)/K.
\]
If $\beta_{j}^2 \leq \beta_{(k)}^2$, then we have for $j > K$,
\[
\mu_j \;=\; \frac{2\beta_j^2}{\sigma^2 \Delta_{jj}} \;\leq\; \frac{2K\beta_{(k)}^2}{\sigma^2 j b_K(\Sigma)} \;\leq\; \frac{-K\log\alpha}{2j}.
\]
Define 
\[
N_\alpha^{(1)} = \#\left\{j \leq K:\; \beta_j \neq 0\right\}, \quad \text{ and } \;\; 
N_\alpha^{(2)} = \#\left\{i \geq 1:\; \beta_{K+i}^2 \leq \beta_{(k)}^2, \;\eta_{K+i} > -\frac{1}{2}\log \alpha\right\}.
\]
At most $k-1$ indices $j$ have $\beta_j^2 > \beta_{(k)}^2$, so the total number of indices with $\eta_j \geq -\frac{1}{2}\log\alpha$ is at most $k + N_{\alpha}^{(1)} + N_{\alpha}^{(2)} - 1$. If $N_\alpha^{(1)} = n_1$ and $N_{\alpha}^{(2)} = n_2$, then we have
\begin{equation}\label{eq:etabound:pf3}
\eta_{(k+n_1+n_2)} \;\leq\; -\frac{1}{2}\log\alpha,
\end{equation}
in which case Proposition~\ref{prop:bigokd} implies that the conditional expected number of rejections is bounded above by $(2k+n)C_1^\dagger(\alpha) + C_2^\dagger(\alpha)$, where $C_i^\dagger(\alpha) = C_i(\alpha, \sqrt{\alpha} - \alpha)$ for $i=1,2$. Therefore the expected number of rejections is 
\begin{align*}
\EE R &\;\leq\; (k+\EE N_\alpha^{(1)}+\EE N_\alpha^{(2)})C_1^\dagger(\alpha) + C_2^\dagger(\alpha).
\end{align*}
Because $\PP\left(\beta_j \neq 0\right) = \pi_1$ for every $j$, we have $\EE N_\alpha^{(1)} = \pi_1 K \leq k$. For $j > K$, we can write 

Applying Lemma~\ref{lem:Nalpha} with $z_i = \eta_{k+i}$ implies $\EE N_\alpha \leq C_3(\alpha) k$, so the result holds for 
\[
    C_1^*(\alpha) \;=\; (2+C_3(\alpha))\,C_1(\alpha, \sqrt{\alpha}-\alpha), \quad\text{ and }\;\;
    C_2^*(\alpha) \;=\; C_2(\alpha, \sqrt{\alpha}-\alpha).
\]
If $\beta_{(k)}$ is replaced with $\beta_{(1)}$ in \eqref{eq:finitesamplecondition} then the same proof applies, except that instead of $k-1$ indices with large $\beta_j$, there are none. Then we can replace $2k+n$ with $k+1+n$ in \eqref{eq:etabound:pf2}, and the result holds for
\[
C_1^*(\alpha) \;=\; (1+C_3(\alpha))\,C_1(\alpha, \sqrt{\alpha}-\alpha), \quad\text{ and }\;\;
    C_2^*(\alpha) \;=\; C_1(\alpha, \sqrt{\alpha}-\alpha) + C_2(\alpha, \sqrt{\alpha}-\alpha).
\]

\end{proof}

\maximin*

\begin{proof}
    Let $\Pi$ denote a uniformly random permutation on $d$ elements, and let $\Pi(j) \in \{1,\ldots,d\}$ denote the index where $j$ is sent by $\Pi$. Because $\Sigma$ is constant, to limit notational bloat we will suppress the input $\Sigma$ in $b_k(\Sigma)$, writing $b_1 \geq b_2 \geq \cdots \geq b_d$ instead. We assume without loss of generality that $\Delta_{11} \geq \cdots \geq \Delta_{dd}$. Finally fix a representative in $\Omega$ as
    \[
    \beta^* = (|\beta_{(1)}|,\ldots,|\beta_{(d)}|).
    \]

    The proof of Theorem~\ref{thm:mainrandom} argues that $C_1^*(\alpha)k + C_2^*(\alpha)$ is an analytic upper bound for the expected number of total rejections when we apply the knockoff* procedure with $\eta_j$ sampled from
    \begin{equation}\label{eq:etaforalgo}
    \eta_j \;\simind\; |\cN(\mu_j^*, 2\mu_j^*)|, \quad \text{ with } \;\;\mu_j^* \;=\; \frac{2\beta_{\Pi(j)}^{*2}}{b_j} 
    \end{equation}
    As a result, $\frac{1}{d_1}B_T$ is an analytic upper bound for the $\TPR$ of the same process.
    
    By contrast, Algorithm~\ref{alg:t3knockoff} directly samples $\eta$ from the same distribution \eqref{eq:etaforalgo} and then directly simulates the knockoff* procedure with those $\eta$ values and in each Monte Carlo iteration directly calculates $\TPP \leq R/d_1$. As a result,  $B_T \geq \EE\,R/d_1 \geq \EE\, \TPP = \EE \,\widehat{\TPR}$. 
    
    Next, consider the behavior of any feasible knockoff method $\cR$. Because $\Omega$ is a finite set, its lowest power on any $\beta\in \Omega$ is well-defined, and is no larger than its average power under sampling from any distribution over $\Omega$:
    \[
    \EE_{\beta \sim P} \,\TPR(\cR, \beta) \;\geq\; \min_{\beta\in\Omega} \TPR(\cR,\beta),
    \]
    where $P$ is the distribution of $(\beta^*_{\Pi(1)},\ldots,\beta^*_{\Pi(d)})$. Conditional on $\beta$, $\eta_1,\ldots,\eta_d$ are then distributed as
    \[
    \eta_j \;\simind\; |\cN(\mu_j, 2\mu_j)|, \quad \text{ with } \;\;\mu_j \;=\; \frac{2\beta_{\Pi(j)}^{*2}}{\Delta_{jj}} \;\leq\; \mu_j^*,
    \]
    and once they are generated $\cR$ will choose the orderings and directions in some way that is suboptimal compared to knockoff*. As a result, $B_A \geq \EE_{\beta \sim P} \,\TPR(\cR, \beta)$ and we have the result.
\end{proof}

\randombeta*

\begin{proof}
We begin with the first claim. When the variance $\sigma^2$ is known, and the test statistics are $z$ statistics, the claim follows from the fact that the proportion of non-null $p$-values that are smaller than $\alpha/d$ converges to 1 when all the non-null $\beta_j = \sqrt{2r\log d}$ for some $r>1$. When the variance $\sigma^2$ is unknown, we have access to an $\chi^2_d$-distributed variance estimate $\hat\sigma_d^2 \toProb \sigma^2$.
Therefore, the proportion of non-null $t$-statistics that are larger than $\sqrt{(1 + r)\log d} < \sqrt{2r\log d}$ converges to one. On the other hand, since $\chi^2_d/d \toProb 1$, we have that the c.d.f. of $t_d$ distribution at $-\sqrt{(1 + r)\log d}$ is smaller than the gaussian c.d.f at $-\sqrt{2\log d}$, for $d$ succifiently large. This proves that the TPR also converges to 1 when we use OLS $t$-statistics.

To prove the second claim, we will first lower bound $b_{ad}(\Sigma)$ for a generic fraction $a > 0$ to show that most of the diagonal entries of $\Delta$ are prohibitively large. By assumption, there exists some constant $c > 0$ for which $F(c) < a/2$. Then for large enough $d$ we have $F_d(c) \leq a/2$ as well, so that $|u_{1,(\lfloor ad/2\rfloor)}| \geq c/\sqrt{d}$, and we have
\[
b_{\lceil ad\rceil}(\Sigma) \;\geq\; \lambda_1\sum_{j=1}^{\lceil ad\rceil} u_{1,(j)}^2 \;\geq\; \frac{\lambda_1 ad}{2}\, u_{1,(\lfloor ad/2\rfloor)}^2 \;\geq\; \frac{\lambda_1 ac^2}{2}.
\]
By assumption, $\lambda_1/\log d \to \infty$, so it follows that for sufficiently large $d$ we have
\[
\frac{2\beta_{(1)}^2}{\sigma^2 b_{\lceil ad\rceil}(\Sigma)} \;=\; \frac{2\beta_0^2}{\sigma^2 b_{\lceil ad\rceil}(\Sigma)} \;=\; \frac{4\log d}{b_{\lceil ad\rceil}(\Sigma)} \;<\; -\frac{1}{2}\log\alpha.
\]
As a result, by Theorem~\ref{thm:main}, the expected number of rejections is bounded above by $C_1^*(\alpha)(ad+1) + C_2^*(\alpha)$. If $\liminf d_1/d > 0$, this completes the proof, since $a$ and $\alpha$ were arbitrary.

If $\supp(\beta)$ is uniformly random, then by Theorem~\ref{thm:mainrandom}, the expected number of rejections is bounded above by $C_1^*(\alpha)(\pi_1 ad+1) + C_2^*(\alpha) = C_1^*(\alpha)(ad_1+1) + C_2^*(\alpha)$. Again, this completes the proof since $a$ and $\alpha$ were arbitrary.
\end{proof}

\subsection{Proof of Theorem \ref{thm:asymptoticnormal}}

\asymptotic*

\begin{proof}
    Because $\hSigma_n \toProb \Sigma$, it will be nonsingular with probability approaching 1; likewise for $\hDelta_n \succeq \hSigma_n$. Because $\Delta(\cdot)$ is continuous, we also have $\hDelta_n \toProb \Delta = \Delta(\Sigma)$ and $\hM_n \toProb (\Delta - \Sigma)^{1/2}$. Then
    \[
    \begin{pmatrix} \xi_n \\[5pt] \tbeta_n\end{pmatrix} \;=\; \begin{pmatrix} \hSigma_n^{-1} - \hDelta_n^{-1} & -\hDelta_n^{-1}\hM_n \\[5pt] I_d & \hM_n \end{pmatrix} \;\begin{pmatrix} \hbeta_n \\[5pt] \omega'\end{pmatrix}.
    \]
    Because all four blocks of the matrix are converging in probability, $\hbeta_n$ is converging in distribution by assumption, and $\omega'$ has a fixed distribution, 
    \[
    (\xi_n, \tbeta_n) \;\Rightarrow\; (\xi,\tbeta) \;=\; \left(\Sigma^{-1}\hbeta - \Delta^{-1}(\hbeta + \omega), \; \hbeta + \omega\right).
    \]
    Because $W^+(\cdot)$ and $W^-(\cdot)$ are both continuous, we also have 
    \[
    \left(W^+(\xi_n,\tbeta_n),\; W^-(\xi_n,\tbeta_n)\right) \;\Rightarrow\; \left(W^+(\xi,\tbeta),\; W^-(\xi,\tbeta)\right).
    \]
    To complete the proof, for $(\xi,\tbeta)\in E$, let $\Pi(\xi,\tbeta)$ denote the unique permutation on $2d$ elements that arranges all $2d$ variables in decreasing order. Then $E$ is the disjoint union of $E_\pi = E \cap \Pi^{-1}(\pi)$, where $\Pi^{-1}$ is the preimage and $\pi$ ranges over all permutations. Because there are only finitely many permutations and the rejection set $\cR$ is constant on each $E_\pi$, it remains only to show for each $\pi$ that
    \begin{equation}\label{eq:probpi}
    \PP\left( (\xi_n,\tbeta_n) \in E_\pi\right) \to \PP\left( (\xi,\tbeta) \in E_\pi\right).
    \end{equation}
    For example, if we take $\pi$ to be the identity permutation, then
    \[
    E_\pi \;=\; \{W_1^+ > \cdots > W_d^+ > W_1^- > \cdots > W_d^- > 0\}.
    \]
    Let $\delta_\pi(\xi,\tbeta)\in \RR$ denote the largest margin of any of the $2d$ inequalities defining $E_\pi$, possibly negative, so that $E_\pi = \{\delta_\pi > 0\}.$ Because $\delta_\pi$ is continuous in $(W^+,W^-)$ it too is converging in distribution; let $F_{\pi,n}$ and $F_{\pi}$ denote the cumulative distribution functions of $\delta_\pi(\xi_n,\tbeta_n)$ and $\delta_\pi(\xi,\tbeta)$, respectively. Because $\delta_\pi^{-1}(0) \subseteq E^\setcomp$ has Lebesgue measure zero, 0 is a continuity point of $F_\pi$, so we have \eqref{eq:probpi} and the proof is complete.
\end{proof}

\subsection{Proof of technical lemmas}

\Nalpha*

\begin{proof}
Define $c = -\frac{1}{2}\log\alpha > 0$ and $N_\alpha = \#\{i: |z_i| > c\}$.
\[
c = -\frac{1}{2}\log\alpha > 0, \quad t_i = i/k, \quad \text{ and } \; N_\alpha = \#\{i: |z_i| > c\}.
\]
Because $\PP(|z_i| > c)$ is increasing in $\mu_i$ for $\mu_i \in [0,c]$, we have 
\[
\EE N_\alpha \;=\; \sum_{i=1}^\infty \PP(|z_i| > c) \;\leq\; \sum_{i=1}^\infty f(i/k), \quad \text{ where } \; f(t) = \PP_{\mu=\frac{c}{1+t}}\left(|\cN(\mu,2\mu)| > c\right).
\]
Because $f$ is decreasing, we have
\[
\sum_{i=1}^\infty f(i/k) \;=\; \sum_{i=1}^\infty k\int_{(i-1)/k}^{i/k} f(i/k)\,dt \;\leq\; k\int_0^\infty f(t)\,dt.
\]
Writing $f$ explicitly, we obtain
\[
\EE N \;\leq\; k \int_0^\infty \left( \Phi\left( \frac{-(2+t)\sqrt{c}}{\sqrt{2+2t)}}\right) \,+\, 1 - \Phi\left(\frac{t\sqrt{c}}{\sqrt{2+2t}} \right) \right)\,dt.
\]
The integral is finite since $1-\Phi(x)= \Phi(-x) \leq \exp(-x^2/2)$ for large $x$. Evaluating the integral numerically gives $C_3(0.05) \leq 1.05$, $C_3(0.1) \leq 1.37$, and $C_3(0.2) \leq 2.02$.

\end{proof}

\begin{lemma}
\label{lemma:rwdrifttechnical1}
For any $q, \theta \in (0, 1)$, we have
\[
\frac{q\log(1-\theta)}{\log(1-q\theta)} - 1 \geq \frac{(1-q)}{2}\theta.
\]
\end{lemma}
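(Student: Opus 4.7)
\textbf{Proof proposal for Lemma \ref{lemma:rwdrifttechnical1}.}

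Since $\theta, q\theta \in (0,1)$, both $\log(1-\theta)$ and $\log(1-q\theta)$ are strictly negative. Multiplying both sides of the claim by $\log(1-q\theta)<0$ flips the inequality and reduces the lemma to showing
\begin{equation}\label{eq:flipped}
q\log(1-\theta) \;\leq\; \left(1 + \tfrac{(1-q)\theta}{2}\right)\log(1-q\theta).
\end{equation}
The plan is to verify \eqref{eq:flipped} coefficient-by-coefficient in a power series in $\theta$. Using the absolutely convergent expansion $-\log(1-x) = \sum_{n\geq 1} x^n/n$ for $x\in[0,1)$, one finds that \eqref{eq:flipped} is equivalent to
\[
\sum_{n\geq 1} \frac{q\,\theta^n}{n} \;\geq\; \sum_{n\geq 1} \frac{q^n\theta^n}{n} \;+\; \frac{(1-q)\theta}{2}\sum_{n\geq 1}\frac{q^n\theta^n}{n},
\]
so after shifting the index in the last sum, it suffices to show that for every $n\geq 2$ the coefficient of $\theta^n$ on the left minus the right is nonnegative; the $n=1$ coefficients cancel identically.

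For $n \geq 2$ the coefficient of $\theta^n$ on the LHS minus RHS is
\[
c_n \;=\; \frac{q(1-q^{n-1})}{n} \;-\; \frac{(1-q)q^{n-1}}{2(n-1)} \;=\; q(1-q)\left[\,\frac{1+q+\cdots+q^{n-2}}{n} \;-\; \frac{q^{n-2}}{2(n-1)}\,\right],
\]
after pulling out the common factor $q(1-q)\geq 0$. Thus it remains to check
\begin{equation}\label{eq:reduced}
\frac{1+q+\cdots+q^{n-2}}{n} \;\geq\; \frac{q^{n-2}}{2(n-1)}\qquad\text{for every }n\geq 2,\ q\in[0,1].
\end{equation}
Since $q\in[0,1]$, the geometric sum on the left is at least its last term $q^{n-2}$, so \eqref{eq:reduced} follows from $\tfrac{1}{n}\geq \tfrac{1}{2(n-1)}$, i.e.\ $2(n-1)\geq n$, which is exactly the condition $n\geq 2$. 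This gives $c_n\geq 0$ for every $n\geq 2$, proving \eqref{eq:flipped} and hence the lemma.

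I do not anticipate any real obstacle: both sides of the original inequality vanish at $q=1$, consistent with the boundary behavior, and the series manipulation is justified by absolute convergence. The only mildly delicate step is confirming that the crude bound $1+q+\cdots+q^{n-2}\geq q^{n-2}$ is tight enough for the $n=2$ boundary case (where it is actually an equality, and indeed $c_2=0$). A slightly sharper argument via $1+q+\cdots+q^{n-2}\geq (n-1)q^{n-2}$ would also work and would yield strict inequality for $n\geq 3$, but is not needed.
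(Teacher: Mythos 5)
Your proof is correct, but it takes a genuinely different route from the paper's. The paper defines
\[
f(\theta) \;=\; \tfrac{(1-q)}{2}\,\theta\log(1-q\theta) + \log(1-q\theta) - q\log(1-\theta),
\]
observes $f(0)=0$, and then establishes $f'(\theta) > 0$ for $\theta\in(0,1)$ by computing
\[
f'(\theta)=\tfrac{(1-q)}{2}\Bigl(\log(1-q\theta)+\tfrac{q\theta(1+\theta)}{(1-\theta)(1-q\theta)}\Bigr)
\]
and bounding $\log(1-q\theta)\geq -\tfrac{q\theta}{1-q\theta}$, yielding $f'(\theta)\geq \tfrac{q(1-q)\theta^2}{(1-\theta)(1-q\theta)}>0$. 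You instead flip the inequality into the equivalent form $q\log(1-\theta)\leq\bigl(1+\tfrac{(1-q)\theta}{2}\bigr)\log(1-q\theta)$ and verify it term by term in the power series $-\log(1-x)=\sum_{n\geq 1}x^n/n$; after factoring $c_n = q(1-q)\bigl[\tfrac{1+q+\cdots+q^{n-2}}{n}-\tfrac{q^{n-2}}{2(n-1)}\bigr]$, the crude bound $1+q+\cdots+q^{n-2}\geq q^{n-2}$ reduces everything to $2(n-1)\geq n$. Both approaches are elementary and of comparable length. Yours avoids differentiation entirely and makes transparent exactly why the constant is $\tfrac{1}{2}$ (it is what makes $c_2=0$), and your remark that $1+q+\cdots+q^{n-2}\geq (n-1)q^{n-2}$ would give strict inequality for $n\geq 3$ shows there is slack for $n\geq 3$; the paper's derivative argument, by contrast, delivers a strict lower bound on $f'(\theta)$ that could be integrated to get a quantitative margin in $\theta$. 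Either proof is perfectly adequate for how the lemma is used (inside Lemma 3 to bound the hitting probability of a random walk).
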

\begin{proof}
For any $q\in (0, 1)$, let 
\[
f(\theta) = \frac{(1-q)}{2}\theta \log(1-q\theta) + \log(1-q\theta) - q\log(1-\theta).
\]
Because $\log(1-q\theta) < 0$, it suffices to show that $f(\theta)\geq 0$ for $\theta\in(0, 1)$. First, note that $f(0) = 0$. We will show next that $f'(\theta)>0$ when $\theta > 0$:
\begin{align*}
  f'(\theta) & = \frac{(1-q)}{2}\left( \log(1-q\theta) - \frac{q\theta}{1-q\theta}\right) - \frac{q}{1-q\theta} + \frac{q}{1-\theta}\\
  & = \frac{(1-q)}{2} \left(\log(1-q\theta) + \frac{q\theta(1+\theta)}{(1-\theta)(1-q\theta)}\right).
\end{align*}
Note that
\[
\log(1-q\theta) = - \log\frac{1}{(1-q\theta)} \geq -\left( \frac{1}{(1-q\theta)} - 1 \right) = \frac{-q\theta}{1-q\theta}.
\]
Therefore
\begin{align*}
  f'(\theta) 
  & = \frac{(1-q)}{2} \left(\log(1-q\theta) + \frac{q\theta(1+\theta)}{(1-\theta)(1-q\theta)}\right) \\
  & \geq \frac{(1-q)}{2} \left(\frac{-q\theta}{1-q\theta} + \frac{q\theta(1+\theta)}{(1-\theta)(1-q\theta)}\right) \\  
  & = \frac{q(1-q)\theta^2}{(1-\theta)(1-q\theta)}> 0,
\end{align*}
and the lemma is proved.
\end{proof}
\begin{lemma}
\label{lemma:rwdrift}
Consider a random walk $S_t = \sum_{i=1}^ {t}\zeta_t$ where $\zeta_t$ are i.i.d Bernoulli variables with $\PP(\zeta_t = 1) = 1 - q$ and $\PP(\zeta_t = -1/\alpha) = q$. Let $p = \alpha/(1+\alpha)$ and suppose that $q > p$. Then 
\[
\PP\left(\max_{t \geq 1} S_t \leq 0\right) \leq q\min\left\{\frac{2(q-p)}{p(1-q)}, \,1\right\}
\]
\end{lemma}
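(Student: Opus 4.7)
I would split the bound, proving $\pi \leq q$ and $\pi \leq \tfrac{2q(q-p)}{p(1-q)}$ separately. The first inequality is immediate: if $\zeta_1 = 1$ (probability $1-q$) then $S_1 = 1 > 0$, so $\{\max_{t\ge 1} S_t \leq 0\} \subseteq \{\zeta_1 = -1/\alpha\}$, giving $\pi \leq q$.

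For the second bound, I plan to use an exponential martingale. The MGF $\phi(\theta) := (1-q)e^\theta + qe^{-\theta/\alpha}$ is convex with $\phi(0) = 1$, $\phi'(0) = (1-q) - q/\alpha = (p-q)/p < 0$ (since $q > p$), and $\phi(\theta) \to \infty$, so there is a unique positive root $\theta^* > 0$ of $\phi(\theta) = 1$. Then $M_t := e^{\theta^* S_t}$ is a positive martingale. Let $\tau_+ := \inf\{t \geq 1 : S_t > 0\}$; since up-steps are exactly $+1$ and $S_{\tau_+-1} \leq 0$, we have $S_{\tau_+} \in (0, 1]$ on $\{\tau_+ < \infty\}$, while the negative drift forces $S_n \to -\infty$ on $\{\tau_+ = \infty\}$, so $M_n \to 0$ boundedly. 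Optional stopping at $\tau_+ \wedge n$ and letting $n \to \infty$ gives
\[
1 \;=\; \EE\bigl[e^{\theta^* S_{\tau_+}};\, \tau_+ < \infty\bigr] \;\leq\; e^{\theta^*}\,\PP(\tau_+ < \infty),
\]
so $\PP(\tau_+ < \infty) \geq e^{-\theta^*}$ and $\pi = 1 - \PP(\tau_+ < \infty) \leq 1 - e^{-\theta^*} \leq \theta^*$.

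The step I expect to be the main obstacle is showing $\theta^* \leq \tfrac{2q(q-p)}{p(1-q)}$. By convexity of $f(\theta) := \phi(\theta) - 1$ with $f(0) = 0$ and $f'(0) < 0$, this is equivalent to $f(C) \geq 0$ for $C := 2q(q-p)/[p(1-q)]$. Substituting $\delta := e^{-C/\alpha} \in (0,1)$ recasts $f(C) \geq 0$ as
\[
\delta^{\alpha}\,(1 - q\delta) \;\leq\; 1 - q,
\]
or, after taking logarithms, as $\log\bigl(1 + \tfrac{q(1-\delta)}{1-q}\bigr) \leq C$. At this point I would invoke Lemma~\ref{lemma:rwdrifttechnical1}, choosing its parameter $\theta$ as a rational function of $\delta$ (the natural choice being $\theta = (1-\delta)/(1-q\delta)$, so that $1-\theta$ and $1-q\theta$ match the quantities $(1-q)(1-\delta)/(1-q\delta)$ and $(1-q)/(1-q\delta)$ appearing in the target logarithm). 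The quantitative strengthening of Bernoulli's inequality that the Lemma provides is sharper than $\log(1+x) \leq x$ by exactly the quadratic correction needed to close the gap; routine but careful algebra then converts it into the inequality displayed above. Combining $\pi \leq q$ with $\pi \leq \theta^* \leq C$ yields $\pi \leq \min(q,\, C) = q \cdot \min\!\bigl(1,\, \tfrac{2(q-p)}{p(1-q)}\bigr)$, as claimed.
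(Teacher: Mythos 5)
Your setup is sound through the martingale step: the direct argument $\pi \leq q$ (from $\zeta_1 = +1 \Rightarrow S_1 > 0$) is cleaner than anything in the paper, and your optional-stopping argument with $\tau_+ \wedge n$ arrives at the same conclusion as the paper's exit-from-$(-M,0]$ argument, namely $\pi \leq 1 - e^{-\theta^*}$ where $\theta^*$ is the positive root of $\phi(\theta) = (1-q)e^\theta + q e^{-\theta/\alpha} = 1$.

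The gap is in the next step. You bound $\pi \leq 1 - e^{-\theta^*} \leq \theta^*$ and then try to show $\theta^* \leq C := \tfrac{2q(q-p)}{p(1-q)}$. That is a \emph{strictly stronger} claim than what the paper establishes ($1-e^{-\theta^*} \leq C$), because $1-e^{-x} < x$, and the margin by which it holds is very thin: e.g.\ for $\alpha=1$ and $q = \tfrac12+\epsilon$ a Taylor expansion gives $\phi(C) - 1 = 32\epsilon^3 + O(\epsilon^4)$, i.e.\ only third order, while $C$ itself is $O(\epsilon)$. Your proposed route through Lemma~\ref{lemma:rwdrifttechnical1} with $\theta := (1-\delta)/(1-q\delta)$ does not close this. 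Writing $L = \log\tfrac{1-q\delta}{1-q}$, so that the target is $L \leq C$, and noting $\log(1-\theta) = \log\delta - L$, $\log(1-q\theta) = -L$, $\log\delta = -C/\alpha$, the Lemma reads
\[
\frac{qC}{\alpha L} + q - 1 \;\geq\; \frac{(1-q)\theta}{2}
\quad\Longrightarrow\quad
L \;\leq\; \frac{qC}{\alpha(1-q)\bigl(1 + \theta/2\bigr)},
\]
and to conclude $L \leq C$ you need $\tfrac{q(1-p)}{p(1-q)} \leq 1 + \theta/2$, i.e.\ $\theta \geq \tfrac{2(q-p)}{p(1-q)} = C/q$. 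This fails: for $\alpha=1$, $p=\tfrac12$, $q=0.51$ one has $C \approx 0.04163$, $\delta = e^{-C} \approx 0.95922$, hence $\theta = \tfrac{1-\delta}{1-q\delta} \approx 0.0798 < C/q \approx 0.0816$. More generally, an expansion near $q=p$ shows $\theta - C/q \approx -\tfrac{q(1-q)}{2}\,(C/q)^2 < 0$, so the condition fails by exactly a second-order margin --- which is the same order of magnitude as the slack you gave up by using $1-e^{-x} \leq x$.

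The fix is to not linearize. Instead of bounding $\theta^*$, bound $1-\lambda$ directly where $\lambda = e^{-\theta^*}$, which satisfies the polynomial equation $\lambda = 1 - q + q\lambda^{1/p}$ (equivalent to $\phi(\theta^*)=1$). Reparametrize by $1-\lambda = q\theta$; the equation then collapses to $1-\theta = (1-q\theta)^{1/p}$, so $\tfrac{q\log(1-\theta)}{\log(1-q\theta)} = \tfrac{q}{p}$ \emph{exactly}. Lemma~\ref{lemma:rwdrifttechnical1} then gives $\tfrac{q-p}{p} \geq \tfrac{(1-q)\theta}{2}$, hence $1-\lambda = q\theta \leq q\cdot\tfrac{2(q-p)}{p(1-q)}$, with no slack left to recover. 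This is the paper's argument, and it is where the peculiar form of Lemma~\ref{lemma:rwdrifttechnical1} --- a bound on $\tfrac{q\log(1-\theta)}{\log(1-q\theta)}$ rather than a generic Bernoulli-type inequality --- is actually earning its keep.
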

\begin{proof}
We begin by identifying the martingale associated with the moment generating function of $S_t$.
Let $\psi_0>0$ be a positive value that satisfies
\begin{equation}
\label{eq:MGFmartingale}
\EE e^{\psi_0\zeta_1} = (1-q)e^{\psi_0} + qe^{-\psi_0\frac{1}{\alpha}} = 1.
\end{equation}
We will prove later that such $\psi_0$ exists and is unique.
It follows that $e^{\psi_0S_t}$ is a martingale, since
\[
\EE [e^{\psi_0S_{t+1}}|S_t] = e^{\psi_0S_t}\EE e^{\psi_0\zeta_{t+1}}= e^{\psi_0S_t}.
\]
For any finite value $M>0$, let $\tau$ be the first time when the random walk leaves $(-M, 0]$, i.e.
\[
\tau = \min \left\{t\geq 1 :\, S_\tau > 0  \;\text{or}\; S_\tau \leq -M\right\}
\]
Then $\tau$ is a stopping time, which is almost surely finite since $\EE\,\zeta_1 < 0$ implies $S_t \to -\infty$ almost surely. 
Since $|S_{t\wedge\tau}| \leq M+1$ for all $t\geq 1$, we have by the optional stopping theorem
\begin{align}
1 &\;=\; \EE e^{\psi_0 S_\tau} \\
\label{eq:EStau}
&\;=\; p_0(M) \EE [e^{\psi_0 S\tau}|S_\tau > 0] + (1-p_0(M)) \EE [e^{\psi_0 S_\tau}|S_\tau \leq -M],
\end{align}
where $p_0(M)$ is the probability that $S_t$ reaches $(0, \infty)$ before it reaches $(-\infty, -M)$.
Since $S_t$ can increase no more than 1 at a time, we have $S_\tau\leq 1$. Therefore
\[
\EE [e^{\psi_0 S_\tau}|S_\tau > 0] \leq e^{\psi_0},
\]
and on the other hand, we have
\[
\EE [e^{\psi_0 S_\tau}|S_\tau \leq -M] \leq e^{-\psi_0 M}.
\]
Substituting both bounds into \eqref{eq:EStau} and isolating $p_0(M)$, we have
\[
p_0(M) \;\geq\; \frac{1 - e^{-\psi_0 M}}{e^{\psi_0} - e^{-\psi_0 M}}.
\]
Taking $M \to \infty$ we obtain
\[
\PP\left( \max_{t \geq 1} S_t > 0\right) \;\geq\; \sup_{M > 0} p_0(M) = e^{-\psi_0}.
\]
We pause to show the existence of a unique positive solution $\psi_0$ to equation \eqref{eq:MGFmartingale}. Defining $\lambda = e^{-\psi_0}$, we are equivalently seeking a unique solution in $(0,1)$ to the polynomial equation
\begin{equation}
\label{eq:strongmarkovian}
 \lambda = 1 - q + q\lambda^{1 + \alpha^{-1}} = 1 - q + q\lambda^{1/p}.
\end{equation}
Let $g(\lambda) = q\lambda^{1/p} - \lambda + 1-q$. Then $g(0) = 1-q > 0$ and $g(1) = 0$. Furthermore, 
\[
g'(\lambda) = \frac{q}{p}\lambda^{-1+1/p} - 1 = \frac{q}{p} \lambda^{1/\alpha} - 1.
\]
Hence, $g$ has a unique minimum at $\lambda_* = (p/q)^\alpha \in (0,1)$, with $g'(\lambda) < 0$ for $\lambda \in (0,\lambda_*)$ and $g'(\lambda) > 0$ for $\lambda \in (\lambda_*, 1)$. As a result $g(\lambda) = 0$ has a unique solution in $(0,1)$. To complete the proof, we require an upper bound for $1-\lambda$, since we have already shown $\PP\left( \max_{t \leq 1} S_t \geq 0\right) \;\leq\; 1-\lambda$.

Let $1 - \lambda = q\theta$. Then \eqref{eq:strongmarkovian} can be expressed as
\begin{equation}
\label{eq:strongmarkovian_delta}
    1 - \theta = (1 - q\theta)^{1/p}.
\end{equation}
Taking the log on both sides and multiplying by $q$, we have
\[
\frac{q}{p} = \frac{q\log(1-\theta)}{\log(1-q\theta)}.
\]
The right hand side approaches 1 as $\theta \to 0$ and diverges to infinity as $\theta \to 1$, giving the trivial bound $\theta < 1$. We are mainly interested in improving this bound when $q/p$ is close to 1, in which case $\theta$ should be small.
Lemma \ref{lemma:rwdrifttechnical1} gives
\[
\frac{q}{p} - 1 \;\geq\; \frac{(1-q)}{2}\theta \quad\Rightarrow\quad \theta \;\leq\; \frac{2(q-p)}{p(1-q)},
\]
so that, as desired,
\[
\PP\left(\max_{t \geq 1} S_t \leq 0\right) \;\leq\; 1 - \lambda \;\leq\; q \min\left\{\frac{2(q-p)}{p(1-q)}, \, 1\right\}.
\]
\end{proof}

\begin{lemma}
\label{lemma:pr}
Consider the following random walk defined in the proof of Proposition \ref{prop:bigokd}:
\[
\tilde{S}_k = \sum_{j=1}^k (p - \tilde{Z}_j), \quad \tilde{Z}_j \simiid \text{Bern}\left(q_{\delta}\right), \quad
\]
where $q_{\delta} > p$.
Define
\[
\lambda^* = p + \frac{q_{\delta} - p}{4} < q_\delta, \quad \text{and} \quad c_h= - \left[\lambda^*\log \frac{q_\delta}{\lambda^*} + (1 - \lambda^*)\log \frac{1 - q_\delta}{1 - \lambda^*}\right].
\]
Then for any
\[
r > 4k^*\left(\frac{q_{\delta}}{p}-1\right)^{-1},
\]
we have
\[
p(r) \triangleq \PP\left(\max\left\{k: \tilde{S}_k \geq -pk^*\right\} \;=\; r\right) 
\leq \frac{q_\delta e^{1-c_hr}}{2\pi \sqrt{rp(1-p)}}  \,\cdot\, \min\left\{\frac{2(q_{\delta}-p)}{p(1-q_{\delta})}, \;1\right\}.
 \]
\end{lemma}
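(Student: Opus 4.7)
The plan is to factor $p(r)$ via the Markov property at time $r$ and bound each factor separately. The event $\{\max\{k:\tilde{S}_k \geq -pk^*\}=r\}$ requires both $\tilde{S}_r \geq -pk^*$ and $\tilde{S}_{r+k} < -pk^*$ for all $k\geq 1$. Conditional on $\tilde{S}_r = s \in [-pk^*,\infty)$, the future increments $\tilde{S}_k' := \tilde{S}_{r+k}-\tilde{S}_r$ form an independent copy of the walk, and the second event becomes $\{\max_{k\geq 1}\tilde{S}_k' < -pk^* - s\} \subseteq \{\max_{k\geq 1}\tilde{S}_k' \leq 0\}$ because $-pk^*-s \leq 0$. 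Using independence of the past and the future:
\begin{equation*}
p(r) \;\leq\; \PP\bigl(\tilde{S}_r \geq -pk^*\bigr)\;\cdot\;\PP\bigl(\max_{k\geq 1}\tilde{S}_k' \leq 0\bigr).
\end{equation*}

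For the no-return factor, I would observe that our walk is just a positive rescaling of the walk in Lemma~\ref{lemma:rwdrift} with $q=q_\delta$: taking $S_k$ as in that lemma, the positive step $1$ times $p$ gives $p$, and the negative step $-1/\alpha$ times $p$ gives $p(-1/\alpha) = -(1-p) = p-1$, matching the steps of $\tilde{S}$. Since $p>0$, the event $\max_{k\geq 1}\tilde{S}_k'\leq 0$ coincides with $\max_{k\geq 1}S_k'\leq 0$, so Lemma~\ref{lemma:rwdrift} directly supplies the factor $q_\delta \min\{2(q_\delta-p)/(p(1-q_\delta)),\,1\}$.

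For the marginal factor, I would rewrite $\tilde{S}_r = pr - \sum_{j=1}^r \tilde{Z}_j$, so $\{\tilde{S}_r \geq -pk^*\} = \{\mathrm{Bin}(r,q_\delta) \leq \lambda_r^* r\}$ with $\lambda_r^* = p(1 + k^*/r)$. The hypothesis $r > 4k^*(q_\delta/p - 1)^{-1}$ yields $pk^*/r < (q_\delta - p)/4$, hence $\lambda_r^* \leq \lambda^* < q_\delta$, so the probability is dominated by $\PP(\mathrm{Bin}(r,q_\delta)\leq \lambda^* r)$. Since $c_h = D(\lambda^*\,\|\,q_\delta)$ is exactly the relative-entropy Chernoff rate, the classical Chernoff bound already gives the exponential factor $e^{-c_h r}$; to sharpen it to $e^{1-c_h r}/(2\pi\sqrt{r\,p(1-p)})$, I would apply the Stirling upper bound $\binom{r}{k}\leq (2\pi r \lambda(1-\lambda))^{-1/2}\lambda^{-k}(1-\lambda)^{-(r-k)}\,e^{1/(12r)}$ with $\lambda = k/r$ to each binomial mass, then sum the lower tail beginning at $k = \lfloor \lambda^* r\rfloor$ as a geometric series whose ratio $\lambda^*(1-q_\delta)/((1-\lambda^*)q_\delta)$ is strictly less than $1$ and uniformly bounded away from $1$ by our hypothesis. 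Multiplying the two factors together gives the claimed bound.

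The main obstacle is purely bookkeeping inside step three: extracting exactly the numerical prefactor $e/(2\pi)$ and the specific denominator $\sqrt{p(1-p)}$ rather than the more natural $\sqrt{\lambda^*(1-\lambda^*)}$ that Stirling first produces. In the regime of interest we have $p \leq \lambda^* < q_\delta$, so when all three lie below $1/2$ the substitution $\lambda^*(1-\lambda^*) \geq p(1-p)$ only weakens the inequality; the residual multiplicative constant from the geometric summation, together with the Stirling correction $e^{1/(12r)}$, must then be absorbed cleanly into the single $e^1$ factor. Everything else is routine Chernoff and random-walk manipulation, and no new probabilistic input beyond Lemma~\ref{lemma:rwdrift} is needed.
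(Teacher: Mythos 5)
Your factorisation of $p(r)$ at time $r$ and your treatment of the no-return factor via Lemma~\ref{lemma:rwdrift} coincide exactly with the paper's argument. The gap is in the marginal factor. You bound $\PP(\tilde{S}_r \geq -pk^*)$, a full binomial lower tail, and then try to recover the single-mass form by summing a geometric series. This cannot produce the stated prefactor. The geometric ratio $\rho = \lambda^*(1-q_\delta)/\big((1-\lambda^*)q_\delta\big)$ depends only on $p$ and $q_\delta$; as $q_\delta \downarrow p$ (equivalently $\delta \downarrow 0$) we have $\lambda^* \to p$ and $\rho \to 1$, so the sum $1/(1-\rho)$ is \emph{not} uniformly bounded and cannot be absorbed into the single factor of $e$. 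Your hypothesis $r > 4k^*(q_\delta/p - 1)^{-1}$ is a lower bound on $r$, not on $1-\rho$, so it gives you no control here.

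The missing idea is that the last-exit event at time $r$ imposes a \emph{two-sided} constraint, not just $\tilde{S}_r \geq -pk^*$. Since $\max\{k:\tilde{S}_k\geq -pk^*\}=r$ also forces $\tilde{S}_{r+1}<-pk^*$, and the walk's increment $p-\tilde{Z}_{r+1}$ lies in $\{p,\,p-1\}$, the two inequalities together squeeze $\sum_{j=1}^r\tilde{Z}_j$ into an interval of length $1-p<1$, which contains at most one integer, namely $\lfloor pr+pk^*\rfloor$. So the event is contained in $\{\sum_{j=1}^r\tilde{Z}_j = \lfloor pr+pk^*\rfloor\}$ (intersected with the no-return event, the two being independent), and the marginal factor is a \emph{single} binomial mass. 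Applying Stirling to that one mass, with the hypothesis ensuring $\lfloor pr+pk^*\rfloor/r \leq \lambda^* < q_\delta$ so that the negative-KL expression is monotone on the relevant range, directly yields $\frac{e}{2\pi\sqrt{rp(1-p)}}\,e^{-c_h r}$ with no geometric sum to absorb. Once you replace the tail by the single mass, the rest of your outline goes through.
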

\begin{proof}
Define events
\[
A_1 = \left\{\sum_{j=1}^r \tilde{Z}_j = \lfloor pr+pk^* \rfloor\right\},
\]
and
\[
A_2 = \left\{\max_{k\geq 1} \sum_{j=r+1}^{r+k} (p - \tilde{Z}_j)  \leq 0 \right\}.
\]
We will show that 
\[
\left\{\max\left\{k: \tilde{S}_k \geq -pk^*\right\} \;=\; r\right\} \subset A_1\cap A_2.
\]
First, note that
$\max\left\{k: \tilde{S}_k \geq -pk^*\right\} = r$ implies the following three conditions:
\[
(1) \tilde{S}_r \geq -pk^*, \quad (2)  \tilde{S}_{r+1} \leq -pk^*, \quad \text{and} \quad (3) \max_{k\geq 1} (\tilde{S}_{r+k} - \tilde{S}_r) \leq 0.
\]
Recalling the definition of $\tilde{S}_r$, conditions (1) and (2) are equivalent to
\[
pr+pk^* + p - 1 \;\leq\; \sum_{j=1}^r \tilde{Z}_j \;\leq\; pr+pk^*.
\]
Since $\sum_{j=1}^r \tilde{Z}_j$ is an integer, one of the following two events must happen: (a) there exists no integer between $pr+pk^*$ and $pr+pk^*+p-1$. In this case, $\left\{\max\left\{k: \tilde{S}_k \geq -pk^*\right\} \;=\; r\right\}$ is an empty set; (b) there exists exactly one integer between $pr+pk^*$ and $pr+pk^*+p-1$. Then it follows that $\sum_{j=1}^r \tilde{Z}_j = \lfloor pr+pk^* \rfloor$. In either case, we have shown that 
\[
\left\{\max\left\{k: \tilde{S}_k \geq -pk^*\right\} \;=\; r\right\} \subset A_1.
\]
In addition, condition (3) is equivalent to event $A_2$. Therefore, we have shown that
\[
\left\{\max\left\{k: \tilde{S}_k \geq -pk^*\right\} \;=\; r\right\} \subset A_1\cap A_2.
\]
Note that $A_1$ only depends on $\tilde{Z}_j, j \leq r$ and $A_2$ only depends on $\tilde{Z}_j, j > r$. Since $\{\tilde{Z}_j\}$ is a sequence of i.i.d Bernoulli variables, $A_1$ and $A_2$ are independent. Therefore
\[
p(r) \leq P(A_1)P(A_2).
\]
We now bound $P(A_1)$ and $P(A_2)$ separately.
First,
\[
P(A_1) = \text{Binom}(r, q_{\delta}; \lfloor pr+pk^* \rfloor).
\]
where $\text{Binom}(r, q_{\delta}; \lfloor pr+pk^* \rfloor)$ is the probability of the binomial distribution $\text{Binom}(r, q_{\delta}; \cdot)$ at $\lfloor pr+pk^* \rfloor$.
Let $m = \lfloor pr+pk^* \rfloor$, we have
\[
\text{Binom}(r, q_{\delta}; \lfloor pr+pk^* \rfloor) = \frac{r!}{m!(r-m)!}e^{m\log q_{\delta}}e^{(r-m)\log(1 - q_{\delta})}.
\]
Using Sterling's lemma, we obtain
\[
\frac{r!}{m!(r-m)!} \leq \frac{e}{2\pi}\sqrt{\frac{r}{m(r-m)}}e^{r\log r - m\log m - (r-m)\log (r-m)}
\]
Since $m \geq pr$, we know that 
\[
\sqrt{\frac{r}{m(r-m)}} \leq \sqrt{\frac{1}{rp(1-p)}}
\]
Therefore
\[
\text{Binom}(r, q_{\delta}; m) \leq
\frac{e}{2\pi}\sqrt{\frac{r}{m(r-m)}} \exp\left(m\log\frac{rq_{\delta}}{m} + (r-m)\log\frac{r(1-q_{\delta})}{r-m}\right).
\]
Let $\lambda = m/r \leq p + (q_\delta-p)/4 <q_\delta$. Then we have
\[
m\log\frac{rq_{\delta}}{m} + (r-m)\log\frac{r(1-q_{\delta})}{r-m} = r\left[\lambda\log \frac{q_\delta}{\lambda} + (1 - \lambda)\log \frac{1 - q_\delta}{1 - \lambda}\right].
\]
Note that the function
\[
g(\lambda) = \lambda\log \frac{q}{\lambda} + (1 - \lambda)\log \frac{1 - q_\delta}{1 - \lambda }
\]
is the negative of the KL divergence between $\text{Bernoulli}(\lambda)$ and $\text{Bernoulli}(q_\delta)$, and is thus 
increasing from $\lambda \in (0, q_\delta)$, and decreasing from $(q_\delta, 1)$. As such, we have
\[
m\log\frac{rq_{\delta}}{m} + (r-m)\log\frac{r(1-q_{\delta})}{r-m} \leq r \cdot g(p + (q_\delta-p)/4).
\]
Therefore
\[
P(A_1) \leq \frac{e}{2\pi \sqrt{rp(1-p)}}e^{-c_hr}.
\]
Now we bound the probability of event $A_2$. Using Lemma $\ref{lemma:rwdrift}$ with $N = 1/\alpha$, we get
\[
P(A_2) \leq q_\delta \min\left\{\frac{2(q_{\delta}-p)}{p(1-q_{\delta})}, \;1\right\}.
\]
Therefore
\[
p(r) \leq P(A_1)P(A_2) \leq \frac{q_\delta e^{1-c_hr}}{2\pi \sqrt{rp(1-p)}}  \,\cdot\, \min\left\{\frac{2(q_{\delta}-p)}{p(1-q_{\delta})}, \;1\right\}.
\]
The proof is now complete.
\end{proof}

\section*{Acknowledgments}
William Fithian is partially supported by the NSF DMS-1916220 and a Hellman Fellowship
from Berkeley. We are grateful to Stefan Wager, Nike Sun, Rina Barber, and Bin Yu for insights we gained from discussions with them, and especially Lihua Lei whose deep insights into knockoffs communicated over many conversations have greatly shaped our way of thinking about the method. In addition we would like to thank Tijana Zrnic, Art Owen, Dan Kluger, Lucas Janson, and Asher Spector for their thoughtful feedback on a previous draft.

\bibliographystyle{apalike}
\bibliography{knockoff.bib}

\end{document}